\numberwithin{equation}{section}
\theoremstyle{plain}
\newtheorem{thm}{Theorem}[section]
\newtheorem{cor}[thm]{Corollary}
\newtheorem{lemma}[thm]{Lemma}
\newtheorem*{lem*}{Lemma}
\newtheorem*{thm*}{Theorem}
\newtheorem*{cor*}{Corollary}
\theoremstyle{definition}
\newcommand{\del}{\backslash}
\newcommand{\meet}{\land}
\newcommand{\join}{\lor}
\DeclareMathOperator{\cl}{cl}
\title[A Configuration-Preserving Matroid Construction]{A Construction
  that Preserves the Configuration of a Matroid, with Applications to
  Lattice Path Matroids} \author[J.~Bonin]{Joseph E.~Bonin} \address
{Department of Mathematics\\ The George Washington University\\
  Washington, D.C.\ 20052, USA} \email {jbonin@gwu.edu} \date{\today}
\author[A.~de Mier]{Anna de Mier} \address{Departament de Matem\`atiques and IMTECH\\ Universitat Polit\`ecnica de Catalunya\\ Jordi Girona
  1--3, 08034\\ Barcelona, Spain} \email{anna.de.mier@upc.edu}
\subjclass{Primary: 05B35}
\keywords{Matroid, lattice path matroid, cyclic flat, configuration}
\begin{document}

\begin{abstract}
  The configuration of a matroid $M$ is the abstract lattice of cyclic
  flats (flats that are unions of circuits) where we record the size
  and rank of each cyclic flat, but not the set.  One can compute the
  Tutte polynomial of $M$, and stronger invariants (notably, the
  $\mathcal{G}$-invariant), from the configuration.  Given a matroid
  $M$ in which certain pairs of cyclic flats are non-modular, we show
  how to produce a matroid that is not isomorphic to $M$ but has the
  same configuration as $M$.  We show that this construction applies
  to a lattice path matroid if and only if it is not a fundamental
  transversal matroid, and we enumerate the connected lattice path
  matroids on $[n]$ that are fundamental; these results imply that,
  asymptotically, almost no lattice path matroids are Tutte unique.
  We give a sufficient condition for a matroid to be determined, up to
  isomorphism, by its configuration.  We treat constructions that
  yield matroids with different configurations where each matroid is
  determined by its configuration and all have the same
  $\mathcal{G}$-invariant.  We also show that for any lattice $L$
  other than a chain, there are non-isomorphic transversal matroids
  that have the same configuration and where the lattices of cyclic
  flats are isomorphic to $L$.
\end{abstract}

\maketitle

\section{Introduction}

The configuration of a matroid is the abstract lattice that is formed
by the cyclic flats (the flats that are unions of circuits) together
with the size and rank of each cyclic flat, without recording the sets
that are cyclic flats.  The configuration is important in part because
it contains all of the data that is needed to compute many enumerative
matroid invariants.  The most well-known of these invariants is the
Tutte polynomial (see, e.g., \cite{Tutte,handbook}).  For a matroid
$M$ on the set $E(M)$, its \emph{Tutte polynomial} is defined to be
$$T(M;x,y) = \sum_{A\subseteq
  E(M)}(x-1)^{r(M)-r(A)}(y-1)^{|A|-r(A)},$$ which is a generating
function for the pairs $(|A|,r(A))$ as $A$ ranges over all subsets of
$E(M)$.  Derksen \cite{G-inv} introduced a strictly stronger
enumerative invariant, the $\mathcal{G}$-invariant, denoted
$\mathcal{G}(M)$, that records, for each permutation
$\pi=(e_1,e_2,\ldots,e_n)$ of $E(M)$, the $0,1$-vector
$(r_1,r_2,\ldots,r_n)$ of rank increases when the elements of $E(M)$
are added in the order that $\pi$ gives, that is,
$r_i=r(\{e_1,e_2,\ldots,e_i\})-r(\{e_1,e_2,\ldots,e_{i-1}\})$.  Bonin
and Kung \cite{catdata} showed that $\mathcal{G}(M)$ is equivalent to
recording, for each $(r(M)+1)$-tuple of integers
$(d_0,d_1,\ldots,d_{r(M)})$, the number of flags
$\cl_M(\emptyset)=F_0\subsetneq F_1\subsetneq \cdots\subsetneq
F_{r(M)}=E(M)$ of flats of $M$ for which $d_0=|F_0|$ and
$d_i=|F_i-F_{i-1}|$ for $1\leq i\leq r(M)$.  Eberhardt \cite{config}
proved that $T(M;x,y)$ can be computed from the configuration of $M$,
and Bonin and Kung \cite{catdata} showed that the same holds for
$\mathcal{G}(M)$.

This paper develops a new line of inquiry: given a matroid $M$, under
what conditions, and how, can one construct a matroid that is not
isomorphic to $M$ and yet has the same configuration as $M$?
Complementary to that, we also consider matroids $M$ for which all
matroids that have the same configuration as $M$ are isomorphic to
$M$; such matroids are \emph{configuration unique}.  Two related
notions also play roles: a matroid $M$ is \emph{Tutte unique} if
$T(M;x,y)=T(N;x,y)$ implies that $N$ is isomorphic to $M$, and $M$ is
\emph{$\mathcal{G}$ unique} if $\mathcal{G}(M)=\mathcal{G}(N)$ implies
that $N$ is isomorphic to $M$.  Since the Tutte polynomial can be
computed from the $\mathcal{G}$-invariant, which can be computed from
the configuration, Tutte-unique matroids are $\mathcal{G}$ unique, and
$\mathcal{G}$-unique matroids are configuration unique.  See
\cite{TUnTEq} for a survey of Tutte uniqueness.

In Theorem \ref{thm:twofilters}, for a matroid $M$ in which certain
pairs of cyclic flats are non-modular, we show how to construct a
matroid that has the same configuration but is not isomorphic to $M$.
In Section \ref{sec:lpm}, we prove that that construction applies to a
lattice path matroid if and only if it is not a fundamental (or
principal) transversal matroid.  We characterize lattice path matroids
that are fundamental transversal matroids in several ways, and we show
that such matroids are configuration unique.  In Section
\ref{sec:counting}, we show that, for $n\geq 2$, there are
$(3^{n-2}+1)/2$ connected lattice path matroids on
$[n]=\{1,2,\ldots,n\}$ that are fundamental transversal matroids, and
we refine that count by rank; several well-known combinatorial
sequences, such as Pell and Delannoy numbers, arise naturally in this
work.  It follows that, asymptotically, almost no lattice path
matroids are configuration unique.  Theorem \ref{thm:congifrecon}
gives a sufficient condition for a matroid to be configuration unique.
Section \ref{sec:confrecon} also
% gives examples
treats several constructions
of matroids that are
configuration unique but not $\mathcal{G}$ unique.  While nested
matroids (matroids for which the lattice of cyclic flats is a chain)
are known to be Tutte unique, in Theorem \ref{thm:latticeresult}, we
show that for every lattice $L$ that is not a chain, there are
transversal matroids whose lattices of cyclic flats are isomorphic to
$L$ and to which the construction in Theorem \ref{thm:twofilters}
applies.

\section{Background}\label{sec:background}

For general matroid theory background, see Oxley \cite{oxley}. We
adopt the matroid notation used there.  All matroids and lattices
considered in this paper are finite.  We use $[a,b]$ to denote the
interval $\{a,a+1,\ldots,b\}$ in the set $\mathbb{Z}$ of integers, and
we simplify $[1,n]$ to $[n]$.

\subsection{Cyclic flats, configurations,  modular pairs, and
  principal extensions}

A \emph{cyclic set} of a matroid $M$ is a (possibly empty) union of
circuits; equivalently, $X\subseteq E(M)$ is cyclic if $M|X$ has no
coloops.  A \emph{cyclic flat} is a flat that is cyclic.  The set of
cyclic flats of $M$ is denoted $\mathcal{Z}(M)$.  With inclusion as
the order, $\mathcal{Z}(M)$ is a lattice: for $A,B\in \mathcal{Z}(M)$,
the join $A\join B$ is $\cl(A\cup B)$ and the meet $A\meet B$ is
$(A\cap B)-C$ where $C$ is the set of coloops of $M|(A\cap B)$.
Routine arguments show that, for all $X\subseteq E(M)$,
\begin{equation}\label{eq:cftorank}
  r(X)=\min\{r(F)+|X-F|\,:\,F\in\mathcal{Z}(M)\},
\end{equation}
so $M$ is determined by its ground set $E(M)$ along with the cyclic
flats of $M$ and their ranks, that is, by the set
$\{E(M)\}\cup\{(F,r(F))\,:\,F\in \mathcal{Z}(M)\}$.  One cyclic flat
$F$ that yields the minimum in Equation (\ref{eq:cftorank}) is the
closure of the union of the circuits of $M|X$.  We will use the
following result from \cite{sims, cycflats}, which characterizes
matroids from the perspective of cyclic flats and their ranks.

\begin{thm}\label{thm:axioms}
  For a collection $\mathcal{Z}$ of subsets of a set $E$ and a
  function $r:\mathcal{Z}\to \mathbb{Z}$, there is a matroid $M$ on
  $E$ with $\mathcal{Z}(M)=\mathcal{Z}$ and $r_M(X) =r(X)$ for all
  $X\in\mathcal{Z}$ if and only if
  \begin{itemize}
  \item[\emph{(Z0)}] $(\mathcal{Z},\subseteq)$ is a lattice,
  \item[\emph{(Z1)}] $r(0_{\mathcal{Z}})=0$, where $0_{\mathcal{Z}}$
    is the least set in $\mathcal{Z}$,
  \item[\emph{(Z2)}] $0<r(Y)-r(X)<|Y-X|$ for all sets $X,Y$ in
    $\mathcal{Z}$ with $X\subsetneq Y$, and
  \item[\emph{(Z3)}] for all pairs of sets $X,Y$ in $\mathcal{Z}$ (or,
    equivalently, just incomparable sets in $\mathcal{Z}$),
    $$r(X\join Y) + r(X\meet Y) + |(X\cap Y) - (X\meet Y)|\leq
    r(X)+r(Y).$$
  \end{itemize}
\end{thm}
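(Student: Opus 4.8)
The plan is to prove both implications, treating the cyclic flats of a matroid as the model for the data $(\mathcal{Z},r)$ and the axioms (Z0)--(Z3) as the abstract characterization; the sufficiency direction, where one must reconstruct a matroid, is where the real work lies.

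For necessity, suppose $M$ is a matroid and set $\mathcal{Z}=\mathcal{Z}(M)$ and $r=r_M|_{\mathcal{Z}}$. Condition (Z0) is the lattice structure already recorded in the background, and (Z1) holds because $0_{\mathcal{Z}}=\cl_M(\emptyset)$ consists of loops. For (Z2) with $X\subsetneq Y$ in $\mathcal{Z}$, the bound $r(X)<r(Y)$ holds because $r_M(X)=r_M(Y)$ with $X\subseteq Y$ would force $Y\subseteq\cl_M(X)=X$; the upper bound $r(Y)-r(X)<|Y\setminus X|$ says $Y\setminus X$ is dependent in $M/X$, which I would obtain from an auxiliary fact: if $C$ is a circuit of $M$ and $G$ a flat with $C\not\subseteq G$, then $C\setminus G$ is dependent in $M/G$ (one line from submodularity, since $C\cap G$ is then independent). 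Since $Y$ is cyclic, each element of $Y\setminus X$ lies in a circuit $C\subseteq Y$ with $C\not\subseteq X$, so $C\setminus G\subseteq Y\setminus X$ is dependent in $M/X$. Finally (Z3) is just submodularity of $r_M$: writing $C$ for the coloops of $M|(X\cap Y)$, one has $r_M(X\cap Y)=r_M(X\meet Y)+|C|$ and $r_M(X\join Y)=r_M(X\cup Y)$, so $r_M(X\cup Y)+r_M(X\cap Y)\le r_M(X)+r_M(Y)$ is exactly (Z3).

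For sufficiency, assume (Z0)--(Z3) and define $\rho(X)=\min\{r(F)+|X\setminus F|:F\in\mathcal{Z}\}$ on all of $2^E$, reversing Equation (\ref{eq:cftorank}), with the aim that $M$ is the matroid whose rank function is $\rho$. I would first check the three rank axioms. Subcardinality and nonnegativity come from testing $F=0_{\mathcal{Z}}$ together with (Z1), and monotonicity is immediate. The substantive step, and the main obstacle, is submodularity: taking minimizers $F_A,F_B$ for $\rho(A),\rho(B)$, testing $\rho(A\cup B)$ against $F_A\join F_B$ and $\rho(A\cap B)$ against $F_A\meet F_B$, and applying (Z3) to control the rank terms leaves the purely set-theoretic inequality
\[
  |(A\cup B)\setminus(F_A\join F_B)| + |(A\cap B)\setminus(F_A\meet F_B)| - |(F_A\cap F_B)\setminus(F_A\meet F_B)| \le |A\setminus F_A| + |B\setminus F_B|,
\]
which I would reduce, using $F_A\cup F_B\subseteq F_A\join F_B$ and $F_A\meet F_B\subseteq F_A\cap F_B$, to $|(A\cup B)\setminus(F_A\cup F_B)|+|(A\cap B)\setminus(F_A\cap F_B)|\le|A\setminus F_A|+|B\setminus F_B|$ and then verify by a short element-by-element check. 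Once $\rho$ is shown to be monotone, subcardinal, and submodular, the standard rank-function characterization of matroids produces the desired matroid $M$.

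It then remains to identify $\mathcal{Z}(M)=\mathcal{Z}$ with $r_M=r$ on $\mathcal{Z}$. For $F\in\mathcal{Z}$ I would prove $\rho(F)=r(F)$ by establishing $r(F)\le r(G)+|F\setminus G|$ for every $G\in\mathcal{Z}$: applying (Z3) to $F,G$ and using $r(F\join G)\ge r(F)$ reduces this to $r(F)\le r(F\meet G)+|F\setminus(F\meet G)|$, which is (Z2) on the comparable pair $F\meet G\subseteq F$. Sharpening this same computation with the strict inequality in (Z2) shows that adjoining any $e\notin F$ strictly increases $\rho$ (so $F$ is a flat) and that deleting any $e\in F$ leaves $\rho$ unchanged (so $M|F$ has no coloops, i.e.\ $F$ is cyclic). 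For the reverse containment, let $Z$ be a cyclic flat of $M$ and let $G\in\mathcal{Z}$ be a minimizer, so $r_M(Z)=r_M(G)+|Z\setminus G|$; since $r_M(Z\cup G)\le r_M(G)+|Z\setminus G|=r_M(Z)$ and $Z$ is a flat, $G\subseteq Z$ and $Z\setminus G$ is independent in $M/G$. If $Z\setminus G$ were nonempty, cyclicity of $Z$ would supply a circuit $C\subseteq Z$ meeting $Z\setminus G$, and the auxiliary fact above would make $C\setminus G\subseteq Z\setminus G$ dependent in $M/G$, a contradiction. Hence $Z=G\in\mathcal{Z}$, which finishes the argument.
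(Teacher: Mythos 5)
Your proof is correct. One structural point first: the paper does not prove this statement at all --- it quotes it as background, citing Sims' thesis and the Bonin--de Mier paper \emph{The lattice of cyclic flats of a matroid} \cite{sims, cycflats} --- so there is no in-paper argument to compare against. Your route is the natural one that the paper itself signals via Equation~(\ref{eq:cftorank}): necessity is a routine check (your auxiliary fact, that a circuit $C\not\subseteq G$ with $G$ a flat gives $C\setminus G$ dependent in $M/G$, is exactly the right one-line lemma, and your translation of (Z3) into submodularity via the coloop count $|(X\cap Y)-(X\meet Y)|$ is right); for sufficiency, defining $\rho(X)=\min\{r(F)+|X\setminus F|\}$ and verifying the rank axioms is the standard reconstruction, and I checked the two places where the work actually happens. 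The set-theoretic inequality you reduce to, $|(A\cup B)\setminus(F_A\cup F_B)|+|(A\cap B)\setminus(F_A\cap F_B)|\le|A\setminus F_A|+|B\setminus F_B|$, does hold element by element, and your two reduction steps (enlarging $F_A\cup F_B$ to $F_A\join F_B$, and absorbing the correction term $|(F_A\cap F_B)\setminus(F_A\meet F_B)|$ into the meet term) are both valid. Likewise, the ``sharpening'' you invoke works as claimed: for $G\not\subseteq F$ one gets the strict inequality $r(F)<r(G)+|F\setminus G|$ from strict monotonicity in (Z2) applied to $F\subsetneq F\join G$ inside (Z3), which yields flatness of $F$ in $M$, and the strict upper bound in (Z2) rules out coloops, yielding cyclicity; your final step showing $\mathcal{Z}(M)\subseteq\mathcal{Z}$ via the minimizer $G$ and the auxiliary fact is also sound. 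Two trivial slips to fix: in the necessity part you wrote $C\setminus G\subseteq Y\setminus X$ where you mean $C\setminus X$, and nonnegativity of $\rho$ needs $r(F)\ge 0$ for all $F\in\mathcal{Z}$, which uses (Z2) together with (Z1), not (Z1) alone.
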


By Equation (\ref{eq:cftorank}), for matroids $M$ and $N$, a function
$\phi:E(M)\to E(N)$ is an isomorphism of $M$ onto $N$ if and only if
$\phi$ is a bijection, $\phi$ maps $\mathcal{Z}(M)$ onto
$\mathcal{Z}(N)$, and $r_M(A)=r_N(\phi(A))$ for all
$A\in\mathcal{Z}(M)$.

A set $A$ is cyclic in a matroid $M$ if and only if $E(M)-A$ is a flat
of the dual matroid $M^*$ since $A$ being a union of circuits of $M$
is equivalent to $E(M)-A$ being an intersection of hyperplanes of
$M^*$.  Thus, $X$ is a cyclic flat of $M$ if and only if $E(M)-X$ is a
cyclic flat of $M^*$, and so $\mathcal{Z}(M^*)$, the lattice of cyclic
flats of $M^*$, is isomorphic to the order dual of $\mathcal{Z}(M)$.

The \emph{configuration} of a matroid $M$ is a $4$-tuple
$(L, s, \rho,|E(M)|)$, where $L$ is a lattice and $s:L\to\mathbb{Z}$
and $\rho:L\to\mathbb{Z}$ are functions such that there is an
isomorphism $\phi: L\to \mathcal{Z}(M)$ for which $s(x)=|\phi(x)|$ and
$\rho(x)=r(\phi(x))$ for all $x\in L$.  Many $4$-tuples can satisfy
these properties, but they all contain the same data, so we view them
as the same.  Two matroids \emph{have the same configuration} if some
$4$-tuple $(L, s, \rho,n)$ is the configuration of both.  As Figure
\ref{fig:sameconfig} illustrates, non-isomorphic matroids can have the
same configuration.

\begin{figure}
  \centering
  \begin{tikzpicture}[scale=1]
    \filldraw (0,1) node[above=2] {\small$1$} circle (2.5pt); %
    \filldraw (1,1) node[above=2] {\small$2$} circle (2.5pt); %
    \filldraw (2,1) node[above=2] {\small$3$} circle (2.5pt); %
    \filldraw (0,0) node[above=2] {\small$4$} circle (2.5pt); %
    \filldraw (1,0) node[above=2] {\small$5$} circle (2.5pt); %
    \filldraw (2,0) node[above=2] {\small$6$} circle (2.5pt); %
  
    \draw[thick](0,0)--(2,0);%
    \draw[thick](0,1)--(2,1);%
    \node at (1,-0.75) {$M$};%

    \node[inner sep = 0.3mm] (em) at (4,-0.5) {\footnotesize $(0,0)$};%
    \node[inner sep = 0.3mm] (a) at (3.25,0.5) {\footnotesize
      $(3,2)$};%
    \node[inner sep = 0.3mm] (b) at (4.75,0.5) {\footnotesize
      $(3,2)$};%
    \node[inner sep = 0.3mm] (t) at (4,1.5) {\footnotesize $(6,3)$};%

    \foreach \from/\to in {em/a,em/b,a/t,b/t} \draw(\from)--(\to);%
    
    \filldraw (6,0.5) node[above=2] {\small$1$} circle (2.5pt); %
    \filldraw (7,0.75) node[above=2] {\small$2$} circle (2.5pt); %
    \filldraw (8,1) node[above=2] {\small$3$} circle (2.5pt); %
    \filldraw (7,0.25) node[below=2] {\small$5$} circle (2.5pt); %
    \filldraw (8,0) node[below=2] {\small$6$} circle (2.5pt); %
    \filldraw (8.5,0.5) node[above=2] {\small$4$} circle (2.5pt); %
    
    \draw[thick](6,0.5)--(8,0);%
    \draw[thick](6,0.5)--(8,1);%
    \node at (7,-0.75) {$M'$};%
  \end{tikzpicture}
  \caption{Two non-isomorphic matroids $M$ and $M'$ that have the same
    configuration.  Each pair shown in the lattice gives the size and
    rank of the corresponding cyclic flat.}
  \label{fig:sameconfig}
\end{figure}
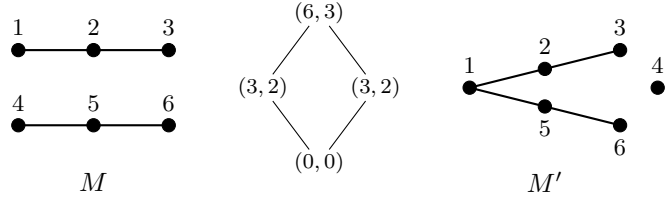

The next lemma holds since, for any element $x$ of the lattice in the
configuration, we can compare the size $s(x)$ and rank $\rho(x)$ to
those of pairs $y,z$ for which $y\meet z=0$ and $y\join z=x$.

\begin{lemma}\label{lem:configdirsum}
  For a cyclic flat $F$ of $M$, whether $M|F$ is connected can be
  deduced from the corresponding element of the configuration.  For
  each connected component $X$ of $M$, we can obtain the configuration
  of the restriction $M|X$ from that of $M$, so $M$ is configuration
  unique if and only if all such restrictions $M|X$ are configuration
  unique.
\end{lemma}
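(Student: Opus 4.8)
The plan is to establish the lemma in two parts, both leveraging the fact that the configuration records, for every element $x$ of the lattice $L$, the pair $(s(x),\rho(x))$ together with the full lattice structure (meets and joins).

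For the first claim, I would observe that a cyclic flat $F$, viewed as the restriction $M|F$, is itself a matroid whose cyclic flats are exactly the cyclic flats of $M$ contained in $F$; moreover the rank function of $M|F$ agrees with that of $M$ on those sets. Thus the sublattice of $L$ consisting of elements at or below the element corresponding to $F$, with the inherited size and rank data, is precisely the configuration of $M|F$. Now $M|F$ fails to be connected exactly when $F$ decomposes as a direct sum, which happens if and only if there exist two nonempty cyclic flats $Y,Z$ strictly below $F$ with $Y\meet Z=0_{\mathcal{Z}}$, $Y\join Z=F$, and, crucially, $s(Y)+s(Z)=s(F)$ and $\rho(Y)+\rho(Z)=\rho(F)$ (so that the two pieces are placed on disjoint ground sets that together exhaust $F$ with additive rank). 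All of these conditions are phrased purely in terms of the lattice order and the functions $s,\rho$, so connectivity of $M|F$ is detectable from the configuration alone, as the remark preceding the lemma indicates.

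For the second claim, I would first note that the connected components of $M$ are precisely the maximal nonempty cyclic flats $X$ that are ``direct-sum factors'' of the whole matroid, equivalently the atoms of the decomposition of $E(M)$ into connected components; each such $X$ is a cyclic flat, and by the argument above its configuration (the down-set below $X$ in $L$, with $s$ and $\rho$ restricted) is recoverable from that of $M$. Concretely, $E(M)$ splits as the direct sum of the $M|X$ over the components $X$, and in the configuration this corresponds to expressing the top element $1_{\mathcal{Z}}$ as a join of minimal elements whose sizes and ranks add up additively; I would extract each component's down-set and verify it carries exactly the configuration of that $M|X$. Then I would use the fact that configuration, size, and rank are all additive over direct sums: two matroids have the same configuration if and only if their lists of component configurations agree (as multisets). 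Hence $M$ is determined up to isomorphism by its configuration precisely when each $M|X$ is so determined, giving the claimed equivalence.

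The main obstacle, and the only point requiring genuine care, is the ``only if'' direction of the final biconditional: I must rule out the possibility that a non-configuration-unique behavior could be ``hidden'' by the direct-sum structure. That is, I need to argue that if some component $M|X$ has a non-isomorphic matroid $N_X$ with the same configuration, then replacing $M|X$ by $N_X$ (on the same ground set $X$) yields a matroid with the same configuration as $M$ but not isomorphic to $M$. This follows because an isomorphism of connected matroids must send each connected component to a connected component, so an isomorphism of the whole modified matroid with $M$ would restrict to an isomorphism $N_X\cong M|X$, a contradiction; the additivity of the configuration over direct sums guarantees the global configuration is unchanged. I expect the verification that component configurations combine additively and that no cross-component coincidence can repair the isomorphism to be the part deserving the most attention, though each step is routine given Equation~(\ref{eq:cftorank}) and the isomorphism criterion stated just after Theorem~\ref{thm:axioms}.
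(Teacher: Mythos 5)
Your route is the same one the paper intends---its entire proof is the sentence preceding the lemma, which says that connectivity of $M|F$ is read off by comparing $s(x),\rho(x)$ with the corresponding sums over pairs $y,z$ satisfying $y\meet z=0_{\mathcal{Z}}$ and $y\join z=x$---and your main ideas are the right ones: the down-set below $x$, with its inherited data, is exactly the configuration of $M|F$; configurations combine multiplicatively over direct sums; and the ``only if'' direction follows by replacing one component and cancelling the others. For matroids with no loops and no coloops, your argument goes through.

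However, three steps are false as written, and the exceptions are precisely the cases a reduction-to-components lemma must cover. (i) Loops break your connectivity criterion: every cyclic flat contains every loop, so any witnesses $Y,Z$ to a separation of $F$ satisfy $Y\cap Z=\cl_M(\emptyset)$ and hence $s(Y)+s(Z)=s(F)+s(0_{\mathcal{Z}})$, not $s(F)$. For $M=U_{0,1}\oplus U_{2,3}\oplus U_{2,3}$ and $F=E(M)$, the restriction is disconnected, yet the only candidate pair has $s(Y)+s(Z)=8\neq 7=s(F)$; the repair is the corrected equation (or noting that $1\leq s(0_{\mathcal{Z}})<s(x)$ already certifies disconnectedness). (ii) Connected components need not be cyclic flats: a coloop is a component lying in no cyclic flat, and a single loop is not a flat when other loops exist, so your claim that components are ``maximal cyclic flats that are direct-sum factors'' fails whenever $M$ has coloops, and your extraction misses those components. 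They must instead be read off the configuration as $n-s(1_{L})$ coloops and $s(0_{L})$ loops, each giving a trivially configuration-unique restriction $U_{1,1}$ or $U_{0,1}$. (iii) The nontrivial components do not correspond to ``minimal elements whose join is the top'': for $M=M_1\oplus M_2$ with both $M_i$ connected, they correspond to $(1_{L_1},0_{L_2})$ and $(0_{L_1},1_{L_2})$ in $L\cong L_1\times L_2$, which are atoms only when each $\mathcal{Z}(M_i)$ is a two-element chain. The correct identification---and the reason part 1 of the lemma is stated first---is that they are the maximal elements of $L$ whose down-sets represent \emph{connected} restrictions (after the loop correction). Finally, in the ``only if'' direction an isomorphism need not carry $N_X$ back to $M|X$, only to some component; one concludes by uniqueness of the decomposition into components and multiset cancellation. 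All of these repairs are routine, but as written your proof fails on exactly the matroids with loops or coloops that the lemma is meant to handle.
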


A pair $(X,Y)$ of sets in a matroid $M$ is \emph{modular} if
$r(X)+r(Y)=r(X\cup Y)+r(X\cap Y)$.  Routine calculations with the rank
function of the dual give the following two results.

\begin{lemma}\label{lem:modprcompdual}
  For subsets $X$ and $Y$ of $E(M)$, the pair $(X,Y)$ is modular in
  $M$ if and only if the pair $(E(M)-X,E(M)-Y)$ is modular in $M^*$.
\end{lemma}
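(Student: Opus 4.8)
The plan is to reduce the claim to a direct computation with the standard formula for the rank function of the dual matroid. Write $E=E(M)$, $n=|E|$, and $r=r(M)$, and recall that for any $A\subseteq E$ the dual rank is $r^*(A)=|A|-r+r(E-A)$. Setting $X'=E-X$ and $Y'=E-Y$, the goal is to show that the modularity identity $r^*(X')+r^*(Y')=r^*(X'\cup Y')+r^*(X'\cap Y')$ holds in $M^*$ exactly when $r(X)+r(Y)=r(X\cup Y)+r(X\cap Y)$ holds in $M$.

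The key preliminary observation is that complementation interchanges unions and intersections, so $X'\cup Y'=E-(X\cap Y)$ and $X'\cap Y'=E-(X\cup Y)$. First I would expand each of the four dual-rank terms using the formula above; for instance $r^*(X')=(n-|X|)-r+r(X)$ and $r^*(X'\cup Y')=(n-|X\cap Y|)-r+r(X\cap Y)$, with the analogous expressions for $r^*(Y')$ and $r^*(X'\cap Y')$. Substituting these into the $M^*$-modularity identity and cancelling the common $n$ and $r$ contributions leaves a comparison between the size terms and the $M$-rank terms.

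At that point the size terms $|X|+|Y|$ and $|X\cap Y|+|X\cup Y|$ are equal by inclusion--exclusion, so they cancel, and what remains is precisely the equation $r(X)+r(Y)=r(X\cap Y)+r(X\cup Y)$. Since every step in this chain is an equivalence, both directions are established at once, and one can read the statement for $(M^*)^*=M$ off the same identities to confirm symmetry. There is no real obstacle here beyond careful bookkeeping of the complements; the only point that requires a moment's attention is verifying the two complementation identities in the previous paragraph, after which the result falls out as a routine substitution.
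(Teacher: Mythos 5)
Your proof is correct and follows exactly the route the paper intends: the paper dispenses with this lemma as a ``routine calculation with the rank function of the dual,'' and your expansion of $r^*(A)=|A|-r(M)+r(E-A)$ for the four complementary sets, together with the cancellation via inclusion--exclusion, is precisely that calculation carried out in full.
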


\begin{lemma}\label{lem:compdualconfig}
  For any matroid $M$, the configuration of $M^*$ can be computed from
  that of $M$.  Thus, $M$ and $N$ have the same configuration if and
  only if $M^*$ and $N^*$ have the same configuration, so $M$ is
  configuration unique if and only if $M^*$ is configuration unique.
\end{lemma}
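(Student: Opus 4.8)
The plan is to exhibit the configuration of $M^*$ explicitly as a function of the configuration $(L,s,\rho,n)$ of $M$, using complementation together with the standard formula for the rank function of the dual. The excerpt already records that $X\mapsto E(M)-X$ is an inclusion-reversing bijection from $\mathcal{Z}(M)$ onto $\mathcal{Z}(M^*)$, so the abstract lattice underlying the configuration of $M^*$ is the order dual $L^*$ of $L$; taking $\phi^*(x)=E(M)-\phi(x)$ as the witnessing isomorphism makes this precise. For the size function, if $x\in L$ corresponds to the cyclic flat $X=\phi(x)$ of $M$, then $\phi^*(x)=E(M)-X$ has $|E(M)-X|=n-|X|$, so we set $s^*(x)=n-s(x)$.

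The one step with genuine content is the rank function $\rho^*$. Here I would invoke the standard identity $r_{M^*}(A)=|A|-r_M(E(M))+r_M(E(M)-A)$ (see \cite{oxley}) and apply it to $A=E(M)-X$, which gives
$$r_{M^*}(E(M)-X)=\bigl(n-|X|\bigr)-r(M)+r_M(X),$$
so that $\rho^*(x)=\bigl(n-s(x)\bigr)-r(M)+\rho(x)$. To see that this is a function of the configuration alone, it remains only to recover $r(M)$ from $(L,s,\rho,n)$: applying Equation (\ref{eq:cftorank}) with $X=E(M)$ yields $r(M)=\min\{\rho(x)+\bigl(n-s(x)\bigr):x\in L\}$. (Equivalently, since the elements outside the largest cyclic flat are exactly the coloops, $r(M)=\rho(1_L)+n-s(1_L)$.) Thus $(L^*,s^*,\rho^*,n)$ is computed from $(L,s,\rho,n)$, which proves the first assertion.

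The remaining claims follow formally once we note that $M\mapsto M^*$ is an involution, $(M^*)^*=M$. If $M$ and $N$ have the same configuration, then the computation above produces the same configuration for $M^*$ and $N^*$; the converse is this implication applied to the pair $M^*,N^*$, using $(M^*)^*=M$ and $(N^*)^*=N$. For configuration uniqueness, suppose $M$ is configuration unique and let $N'$ have the same configuration as $M^*$; then $(N')^*$ has the same configuration as $(M^*)^*=M$, so $(N')^*\cong M$ and hence $N'\cong M^*$, and the reverse implication is symmetric. I do not anticipate a real obstacle: the only point requiring care is the recovery of $r(M)$ when $M$ has coloops, so that the top cyclic flat is properly contained in $E(M)$, and the displayed formula for $r(M)$ settles that case.
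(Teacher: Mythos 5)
Your proposal is correct and matches the paper's intended argument: the paper dismisses this lemma as a ``routine calculation with the rank function of the dual,'' and your proof is exactly that calculation, combining the complementation bijection $X\mapsto E(M)-X$ between $\mathcal{Z}(M)$ and $\mathcal{Z}(M^*)$ with the dual rank formula $r_{M^*}(A)=|A|-r(M)+r_M(E(M)-A)$. Your extra care in recovering $r(M)$ from the configuration (needed when $M$ has coloops) is a worthwhile detail that the paper leaves implicit.
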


We next define principal extension, which makes precise the idea of
adding a point freely to a flat of a matroid.  For a matroid $M$, a
subset $X$ of $E(M)$, and an element $e\notin E(M)$, define
$r':2^{E(M)\cup e}\to \mathbb{Z}$ by, for all $Y\subseteq E(M)$,
setting $r'(Y)=r_M(Y)$ and
$$r'(Y\cup e)=
\begin{cases}
  r_M(Y), & \text{if } X\subseteq\cl_M(Y),\\
  r_M(Y)+1, & \text{otherwise.}
\end{cases}$$
It is routine to check that $r'$ is the rank function of a matroid on
$E(M)\cup e$.  This matroid is the \emph{principal extension} of $M$
in which $e$ has been \emph{added freely} to $X$, and is denoted
$M+_X e$.  Clearly $M+_X e = M+_{\cl(X)} e$ and $\cl(X)\cup e$ is a
flat of $M+_X e$.  Also, for $X,Y\subseteq E(M)$ and
$e,f\not\in E(M)$, we have $(M+_X e)+_Y f=(M+_Y f)+_X e$, so the order
in which we apply several principal extensions to subsets of $E(M)$
does not matter.  The \emph{free extension} of $M$ is $M+_{E(M)} e$.
Note that for  $X\subseteq E(M)$, the set $X\cup e$ is a 
circuit of $M+_{E(M)} e$ if and only if $X$ is a basis of $M$.

\subsection{Transversal matroids}

A \emph{set system} on a set $E$ is an indexed family of subsets of
$E$, which we write as $\mathcal{A}=(A_1,A_2,\ldots,A_r)$.  A set may
occur multiple times in $\mathcal{A}$.  A \emph{partial transversal}
of $\mathcal{A}$ is a subset $I$ of $E$ for which there is an
injection $\phi:I\rightarrow [r]$ with $x\in A_{\phi(x)}$ for all
$x\in I$.  \emph{Transversals} of $\mathcal{A}$ are partial
transversals of size $r$.  Edmonds and Fulkerson~\cite{ef} showed that
the partial transversals of a set system $\mathcal{A}$ on $E$ are the
independent sets of a matroid on $E$; we say that $\mathcal{A}$ is a
\emph{presentation} of this \emph{transversal matroid}
$M[\mathcal{A}]$.  A transversal matroid is \emph{fundamental} (or
\emph{principal}) if for some presentation $(A_1,\ldots,A_r)$ and each
$i\in [r]$, some element in $A_i$ is in no $A_j$ with
$j\in [r]-\{i\}$.  The following well-known results, especially
Corollary \ref{cor:bmot}, are relevant to our work (see \cite{bru}).

\begin{lemma}\label{lem:r}
  Any transversal matroid $M$ has a presentation with $r(M)$ sets.  If
  $M$ has no coloops, then each presentation of $M$ has exactly $r(M)$
  nonempty sets.
\end{lemma}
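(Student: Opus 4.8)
The plan is to work with the bipartite graph $G$ that has one vertex for each element of $E(M)$ and one vertex for each index $i\in[m]$ of a given presentation $\mathcal{A}=(A_1,\ldots,A_m)$, with an edge joining $x$ to $i$ exactly when $x\in A_i$; partial transversals of $\mathcal{A}$ then correspond to matchings of $G$, so $r(M)$ is the size of a maximum matching. Two observations are immediate and will be used throughout: since a partial transversal injects into $[m]$, every presentation has at least $r(M)$ sets, and since a basis matches its $r(M)$ elements to $r(M)$ distinct nonempty sets, every presentation has at least $r(M)$ nonempty sets. Thus for the first statement it suffices, by induction on $m$, to show that whenever a presentation has $m>r(M)$ sets we can produce one with $m-1$ sets, and for the second it suffices to show that a presentation with more than $r(M)$ nonempty sets forces $M$ to have a coloop.

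First I would fix a maximum matching $\mu$ and, assuming $m>r(M)$, choose an index $j$ left unmatched by $\mu$. Running the usual alternating-path analysis from $j$, let $R$ be the set of elements and $S$ the set of indices reachable from $j$ along $\mu$-alternating paths (beginning with a non-matching edge). Maximality of $\mu$ forces every element of $R$ to be matched, and the matching then restricts to a bijection $R\to S\setminus\{j\}$, so $|S|=|R|+1$. The key structural point, which I expect to be the main thing to verify carefully, is that the reachable elements and indices form a closed block: every set containing an element of $R$ has its index in $S$, and every set indexed by $S$ is contained in $R$, so $\bigcup_{i\in S}A_i=R$ while no element of $R$ lies in any set outside $S$. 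Consequently the presentation splits as a concatenation of $(A_i:i\in S)$, a presentation on the ground set $R$, and $(A_i:i\notin S)$, a presentation on $E(M)\setminus R$; hence $M=(M|R)\oplus(M|(E(M)\setminus R))$, and the matching shows $R$ is independent, so $M|R$ is the free matroid on $R$.

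With this decomposition in hand, both statements follow quickly. Since $M|R$ is free on $R$, it is also presented by the $|R|$ singleton sets $(\{x\}:x\in R)$; replacing the $|S|=|R|+1$ sets indexed by $S$ with these $|R|$ singletons yields a presentation of the same matroid with one fewer set, which completes the induction for the first statement. For the second, every element of $R$ is a coloop of the free summand $M|R$ and hence a coloop of $M$, while $R\neq\emptyset$ because the nonempty set $A_j$ is contained in $R$; thus a presentation with more than $r(M)$ nonempty sets produces a coloop, and the contrapositive gives the claim. The only delicate step is the verification that $R$ and $S$ form a closed block; the remaining arguments are routine consequences of the definition of a matching and of the direct-sum decomposition.
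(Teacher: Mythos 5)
Your argument breaks at exactly the step you flagged as the delicate one: the ``closed block'' claim is false. With $R$ and $S$ defined by strict alternating reachability from the unmatched index $j$, it is true that $A_i\subseteq R$ for every $i\in S$ (hence $\bigcup_{i\in S}A_i=R$), but it is \emph{not} true that every set meeting $R$ has its index in $S$: an element $x\in R$ is entered along a non-matching edge, so an alternating path can only leave $x$ along its matching edge, and the other sets containing $x$ need never be reached. Concretely, take $\mathcal{A}=(A_1,A_2,A_3)=(\{a\},\{a\},\{a,b\})$ on $E=\{a,b\}$, with maximum matching $\mu=\{a\mapsto 1,\ b\mapsto 3\}$ and $j=2$: then $R=\{a\}$ and $S=\{1,2\}$, yet $a\in A_3$ with $3\notin S$. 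So the presentation does not split into $(A_i:i\in S)$ on $R$ and $(A_i:i\notin S)$ on $E\setminus R$, and the decomposition $M=(M|R)\oplus(M|(E\setminus R))$ does not follow from your reachability analysis. Since both of your concluding steps (replacing the $S$-sets by singletons, and declaring the elements of $R$ to be coloops of a free summand) are justified only through that decomposition, both parts of the lemma remain unproved as written. (For what it is worth, the paper itself does not prove this lemma; it cites Brualdi's survey, so there is no in-paper argument to compare against.)

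The conclusions you want are in fact true, but each needs a different argument. For the first part, your replacement of $(A_i:i\in S)$ by the singletons $(\{x\}:x\in R)$ does present the same matroid, but this must be checked by exchanging matchings rather than by splitting the ground set: any partial transversal of the new system is one of the old, because an element assigned to its singleton can instead be sent to $\mu(x)\in S\setminus\{j\}$, which is disjoint from the indices outside $S$ used by the remaining elements; and any partial transversal of the old system is one of the new, because an element matched into $S$ lies in $\bigcup_{i\in S}A_i=R$ and can be reassigned to its own singleton. For the second part, the statement to prove is that any element $x$ of the unmatched nonempty set $A_j$ is a coloop. If some maximum matching $\nu$ missed $x$, then the component of $x$ in $\mu\,\triangle\,\nu$ is a path starting with the $\mu$-edge at $x$; it cannot end with a $\mu$-edge (that would make it $\nu$-augmenting), so it ends with a $\nu$-edge at a $\mu$-exposed element $w$, giving a $\mu$-alternating path from $w$ to $x$ that ends in a matching edge. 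Prepending the non-matching edge $j$--$x$ then yields a $\mu$-augmenting path from $j$ to $w$, a contradiction. This Dulmage--Mendelsohn-type disjointness argument is what your direct-sum claim was implicitly standing in for, and it cannot be bypassed by the reachability bookkeeping alone.
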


\begin{lemma}
  If $M$ is a transversal matroid, then so is $M|X$ for each
  $X\subseteq E(M)$.  If $(A_1,\ldots,A_r)$ is a presentation of $M$,
  then $(A_1\cap X,\ldots,A_r\cap X)$ is a presentation of $M|X$.
\end{lemma}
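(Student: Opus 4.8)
The plan is to prove the second assertion, which immediately yields the first: exhibiting $(A_1\cap X,\ldots,A_r\cap X)$ as a presentation of $M|X$ shows in particular that $M|X$ is transversal. Write $\mathcal{A}=(A_1,\ldots,A_r)$ and $\mathcal{A}|X=(A_1\cap X,\ldots,A_r\cap X)$. Since a transversal matroid is determined by its independent sets, which are the partial transversals of a presentation, it suffices to prove that $M|X$ and $M[\mathcal{A}|X]$ have the same independent sets. By the definition of restriction, the independent sets of $M|X$ are exactly the independent sets of $M=M[\mathcal{A}]$ that are contained in $X$, that is, the partial transversals of $\mathcal{A}$ that lie in $X$; and the independent sets of $M[\mathcal{A}|X]$ are, by definition, the partial transversals of $\mathcal{A}|X$. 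So the lemma reduces to the single claim that a subset $I$ of $E(M)$ is a partial transversal of $\mathcal{A}|X$ if and only if $I\subseteq X$ and $I$ is a partial transversal of $\mathcal{A}$.

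I would establish this claim directly from the definition of partial transversal by tracking one injection through both systems. If $I$ is a partial transversal of $\mathcal{A}|X$, then an injection $\phi:I\to[r]$ with $x\in A_{\phi(x)}\cap X$ for every $x\in I$ simultaneously witnesses that $I\subseteq X$ and that $I$ is a partial transversal of $\mathcal{A}$. Conversely, if $I\subseteq X$ is a partial transversal of $\mathcal{A}$ via an injection $\phi$, then for each $x\in I$ we have both $x\in A_{\phi(x)}$ and $x\in X$, hence $x\in A_{\phi(x)}\cap X$, so the same $\phi$ witnesses that $I$ is a partial transversal of $\mathcal{A}|X$.

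Because the statement is essentially an unwinding of the definitions of partial transversal and of restriction, I do not expect a substantive obstacle. The only point that needs care is that membership in $A_{\phi(x)}\cap X$ requires both $x\in A_{\phi(x)}$ and $x\in X$, with the hypothesis $I\subseteq X$ supplying the latter; once the reduction to the single set-equality above is in place, the verification of the two inclusions is routine and the argument is short.
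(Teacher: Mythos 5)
Your proof is correct. The paper itself gives no proof of this lemma---it is stated as one of several well-known facts about transversal matroids, with a citation to Brualdi's survey---and your argument is the standard one: the key observation that an injection $\phi$ witnessing $I$ as a partial transversal of $(A_1\cap X,\ldots,A_r\cap X)$ is exactly an injection witnessing both $I\subseteq X$ and $I$ as a partial transversal of $(A_1,\ldots,A_r)$ is precisely the right reduction, and the rest is definition-unwinding as you say.
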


\begin{cor}\label{cor:bmot}
  If $(A_1,\ldots,A_r)$ is a presentation of $M$ and $X$ is any cyclic
  set of $M$, then $r(X)=|\{i\,:\,X\cap A_i\ne \emptyset\}|$.
\end{cor}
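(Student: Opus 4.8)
The plan is to deduce this directly from the two preceding lemmas together with the characterization of cyclic sets given at the start of Section~\ref{sec:background}. First I would apply the restriction lemma to the cyclic set $X$: since $(A_1,\ldots,A_r)$ presents $M$, the family $(A_1\cap X,\ldots,A_r\cap X)$ is a presentation of the restriction $M|X$. The reason for passing to this presentation is that its number of nonempty sets is precisely $|\{i : X\cap A_i\ne\emptyset\}|$, since $A_i\cap X$ is nonempty exactly when $X\cap A_i\ne\emptyset$; this is the quantity we want to equate with $r(X)$.

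The key observation is that the hypothesis that $X$ is cyclic is, by the equivalent formulation of ``cyclic set'' recalled earlier, exactly the statement that $M|X$ has no coloops. This lets me invoke the second part of Lemma~\ref{lem:r}, applied to the coloop-free matroid $M|X$: every presentation of $M|X$ has exactly $r(M|X)$ nonempty sets. In particular, the presentation obtained in the first step has exactly $r(M|X)$ nonempty sets.

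Combining these, the presentation $(A_1\cap X,\ldots,A_r\cap X)$ of $M|X$ has $|\{i : X\cap A_i\ne\emptyset\}|$ nonempty sets on the one hand and exactly $r(M|X)=r(X)$ nonempty sets on the other, which gives the claimed equality. I do not expect any substantial obstacle: the only points requiring care are to match the definition of a cyclic set to the no-coloops hypothesis of Lemma~\ref{lem:r}, and to confirm that restricting the presentation to $X$ carries the sets meeting $X$ onto precisely the nonempty sets of the restricted presentation.
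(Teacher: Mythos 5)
Your proof is correct and is precisely the derivation the paper intends: the corollary is stated without proof as an immediate consequence of the two preceding lemmas, and your argument (restrict the presentation to $X$ via the restriction lemma, use the coloop-free characterization of cyclic sets, and apply the second part of Lemma~\ref{lem:r} to $M|X$) is exactly that chain of reasoning. No gaps; the identification of the nonempty restricted sets with the indices $i$ satisfying $X\cap A_i\ne\emptyset$ is handled correctly.
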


Brylawski \cite{aff} gave a useful way to view a transversal matroid
$M$.  Let $(A_1,A_2,\ldots,A_r)$ be a presentation of $M$ and let the
set $V=\{v_1,v_2,\ldots,v_r\}$ be disjoint from $E(M)$.  View the free
matroid on $V$ (i.e., all subsets of $V$ are independent) as having
the elements of $V$ at the vertices of an $r$-vertex simplex, one
element at each vertex.  For each $e\in E(M)$, add $e$ to this free
matroid by taking the principal extension using the set
$\{v_i\,:\,e\in A_i\}$; that is, put $e$ freely in the face of the
simplex that is spanned by the set $\{v_i\,:\,e\in A_i\}$ of vertices.
Once all elements of $E(M)$ are placed, delete $V$, and the result is
a geometric representation of $M$.  Note that each cyclic set of $M$
spans a face of the simplex.  Also, it follows that a transversal
matroid is fundamental if and only if it has a representation on a
simplex in which, for each vertex of the simplex, at least one element
of the matroid is placed there.

The characterization of transversal matroids in the next theorem was
first formulated by Mason~\cite{mason} using sets of cyclic sets; the
observation that his result easily implies its streamlined counterpart
for sets of cyclic flats was made by Ingleton~\cite{ing}.  For a
family $\mathcal{F}$ of sets we shorten $\cap_{A\in \mathcal{F}} A$ to
$\cap \mathcal{F}$ and $\cup_{A\in \mathcal{F}} A$ to
$\cup \mathcal{F}$.

\begin{thm}\label{thm:mi}
  A matroid is transversal if and only if for all nonempty sets
  $\mathcal{F}$ of cyclic flats,
  \begin{equation}\label{eq:mi}
    r(\cap \mathcal{F}) \leq
    \sum_{\mathcal{X}\subseteq \mathcal{F}}
    (-1)^{|\mathcal{X}|+1}r(\cup\mathcal{X}). 
  \end{equation}
\end{thm}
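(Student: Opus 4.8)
The plan is to prove the two implications separately, with the forward direction (transversal implies the inequality) being a short inclusion--exclusion computation and the converse being the substantive part. For the forward direction, I would fix a presentation $(A_1,\dots,A_r)$ of $M$ with $r=r(M)$, which exists by Lemma~\ref{lem:r}, and for $S\subseteq E(M)$ write $I(S)=\{i:S\cap A_i\neq\emptyset\}$. Since a union of cyclic flats is again a union of circuits, each $\cup\mathcal{X}$ is cyclic, so Corollary~\ref{cor:bmot} gives $r(\cup\mathcal{X})=\bigl|\bigcup_{F\in\mathcal{X}}I(F)\bigr|$. Applying inclusion--exclusion to the sets $\{I(F)\}_{F\in\mathcal{F}}$ (the empty subfamily contributes $0$, and a direct sign count shows that an index $i$ survives exactly when $i\in I(F)$ for all $F\in\mathcal{F}$) yields
\[
\sum_{\mathcal{X}\subseteq\mathcal{F}}(-1)^{|\mathcal{X}|+1}\Bigl|\bigcup_{F\in\mathcal{X}}I(F)\Bigr|=\Bigl|\bigcap_{F\in\mathcal{F}}I(F)\Bigr|,
\]
so the right side of~\eqref{eq:mi} equals $\bigl|\bigcap_{F\in\mathcal{F}}I(F)\bigr|$. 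Finally $(A_i\cap(\cap\mathcal{F}))_{i\in[r]}$ presents $M|(\cap\mathcal{F})$, whence $r(\cap\mathcal{F})$ is at most the number of its nonempty sets; and $A_i\cap(\cap\mathcal{F})\neq\emptyset$ forces $i\in\bigcap_{F}I(F)$. Thus $r(\cap\mathcal{F})\le\bigl|\bigcap_{F}I(F)\bigr|$, which is exactly~\eqref{eq:mi}.

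For the converse the idea is to read a presentation off the lattice of cyclic flats (I may assume $M$ has no coloops, so $E(M)\in\mathcal{Z}(M)$, since coloops are handled by appending a dedicated singleton set for each, preserving~\eqref{eq:mi}). The forward computation shows that a transversal presentation supplies an order-preserving map $I\colon\mathcal{Z}(M)\to 2^{[r]}$ with $|I(F)|=r(F)$, $I(0_{\mathcal{Z}})=\emptyset$, and $I(E(M))=[r]$; conversely such a map should let me recover a presentation by coloring each element with the indices forced on it by the cyclic flats containing it. The plan is therefore: (i) use the hypothesis~\eqref{eq:mi} to build such a map $I$; (ii) set $A_i=\{e:i\in c(e)\}$ where $c(e)=\bigcap\{I(F):F\in\mathcal{Z}(M),\ e\in F\}$; and (iii) verify via the cyclic-flats-to-rank formula~\eqref{eq:cftorank} that $M[\mathcal{A}]=M$, by checking that $M$ and $M[\mathcal{A}]$ have the same cyclic flats with the same ranks.

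The main obstacle is step (i) together with the verification in (iii). Producing $I$ amounts to embedding $\mathcal{Z}(M)$ into the Boolean lattice while matching every rank exactly; this is naturally a defect-Hall (or max-flow/min-cut) existence problem, and the content of~\eqref{eq:mi} in the inclusion--exclusion form derived above, namely that the common intersection $\bigcap_{F\in\mathcal{F}}I(F)$ can be kept large enough to accommodate $r(\cap\mathcal{F})$, is precisely the family of inequalities needed to satisfy the Hall-type conditions on every subfamily of cyclic flats simultaneously. Granting $I$, the delicate point in (iii) is to show that $M[\mathcal{A}]$ acquires no cyclic flats beyond those of $M$ and no smaller ranks; here I would again invoke~\eqref{eq:mi} for arbitrary (not merely comparable) families, in the spirit of condition~(Z3) of Theorem~\ref{thm:axioms}, to control the ranks of intersections, after which~\eqref{eq:cftorank} forces $M[\mathcal{A}]=M$. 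This is exactly the step where the full strength of the hypothesis over \emph{all} nonempty families, rather than over modular pairs alone, is essential.
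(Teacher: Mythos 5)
The paper does not prove Theorem \ref{thm:mi} at all: it is quoted as a known result of Mason \cite{mason} and Ingleton \cite{ing}, so there is no internal proof to compare against and your attempt must stand on its own. Your forward implication does stand: a union of cyclic flats is a cyclic set, so Corollary \ref{cor:bmot} converts each $r(\cup\mathcal{X})$ into $\bigl|\bigcup_{F\in\mathcal{X}}I(F)\bigr|$; your inclusion--exclusion identity $\sum_{\mathcal{X}\subseteq\mathcal{F}}(-1)^{|\mathcal{X}|+1}\bigl|\bigcup_{F\in\mathcal{X}}I(F)\bigr|=\bigl|\bigcap_{F\in\mathcal{F}}I(F)\bigr|$ is correct (each index $i$ contributes $0$ unless $i\in I(F)$ for every $F\in\mathcal{F}$); and $r(\cap\mathcal{F})\le\bigl|\bigcap_{F}I(F)\bigr|$ follows from the induced presentation of $M|(\cap\mathcal{F})$. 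That direction is complete --- but it is the easy one.

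The converse is where the entire content of the theorem lives, and there your proposal has a genuine gap --- in fact two. First, step (i), the existence of the map $I:\mathcal{Z}(M)\to 2^{[r]}$, is asserted to be a ``defect-Hall (or max-flow/min-cut) existence problem'' whose feasibility conditions are ``precisely'' the inequalities \eqref{eq:mi}; no reduction is given, and none is routine. Note also that what steps (ii)--(iii) actually require is stronger than what you ask for: an order-preserving map with $|I(F)|=r(F)$ is not enough, since for the candidate presentation to reproduce the ranks of unions via Corollary \ref{cor:bmot} you need $I(F\vee F')=I(F)\cup I(F')$, i.e.\ a join-preserving map with exact rank matching at every cyclic flat and with $\bigl|\bigcap_{F\in\mathcal{F}}I(F)\bigr|\ge r(\cap\mathcal{F})$ on every subfamily. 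Hall-type theorems produce matchings subject to lower bounds on unions; they do not produce lattice embeddings into a Boolean lattice with prescribed cardinalities on every element and controlled cardinalities on all intersections, and the claim that \eqref{eq:mi} suffices for such an embedding to exist is essentially Mason's theorem itself, restated rather than proved. Second, even granting $I$, step (iii) --- verifying that $M[\mathcal{A}]$ has exactly the cyclic flats of $M$ with the same ranks, so that Equation (\ref{eq:cftorank}) forces $M[\mathcal{A}]=M$ --- is deferred with ``I would again invoke \eqref{eq:mi}\dots to control the ranks of intersections''; the delicate covering property needed (that the sets $c(e)$, $e\in F$, exhaust $I(F)$, and that no unexpected cyclic flats or rank drops arise) is never argued. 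So what you have is a correct proof of the trivial implication together with a plausible-looking program for the substantive one, whose two load-bearing steps are exactly the places where the real work of Mason and Ingleton lies.
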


As explained in \cite{chartrans}, the condition in Theorem
\ref{thm:mi} is equivalent to having Inequality (\ref{eq:mi}) hold for
all nonempty antichains $\mathcal{F}$ of cyclic flats (i.e., no set in
$\mathcal{F}$ is a subset of another set in $\mathcal{F}$).
Inequality (\ref{eq:mi}) holds trivially when $|\mathcal{F}|=1$, and
it is the submodular inequality when $|\mathcal{F}|=2$.  Thus, to show
that a matroid is transversal, it suffices to check Inequality
(\ref{eq:mi}) for all antichains $\mathcal{F}$ of cyclic flats with
$|\mathcal{F}|\geq 3$,

\subsection{Lattice path matroids}

The lattice paths of interest are strings of steps that start at
$(0,0)$, and where each step has unit length and goes either north or
east.  We write lattice paths as words with the letters $N$ (north)
and $E$ (east).  Let $P=p_1p_2\dots p_n$ and $Q=q_1q_2\dots q_n$ be
two lattice paths from $(0,0)$ to $(m,r)$, so $m+r=n$, with $P$ never
going above $Q$. Let $p_{u_1},p_{u_2}, \ldots,p_{u_r}$ be the north
steps of $P$ with $u_1<u_2<\cdots<u_r$; let
$q_{l_1},q_{l_2}, \ldots,q_{l_r}$ be the north steps of $Q$ with
$l_1<l_2<\cdots<l_r$. Let $N_i$ be the interval $[l_i,u_i]$ of
integers. Let $M[P,Q]$ be the transversal matroid on the ground set
$[n]$ that has the presentation $(N_1,N_2,\ldots,N_r)$. In the example
in Figure \ref{fig:runningLPMex}, the upper path $Q$ is $NNENNENEE$
while $P$ is $EENENNENN$; the set $N_1$ is $\{1,2,3\}$, and, at the
top, the set $N_5$ is $\{7,8,9\}$.  A \emph{lattice path matroid} is a
matroid $M$ that is isomorphic to $M[P,Q]$ for some such pair of
lattice paths $P$ and $Q$.  In this paper, we will always take $E(M)$
to be $[n]$ with the usual order, and we will focus on the
presentation $(N_1,N_2,\ldots,N_r)$, which we call the \emph{path
  presentation} of $M$.

\begin{figure}
  \centering
  \begin{tikzpicture}[scale=0.6]
    \draw (0,0) grid (2,2); %
    \draw (1,1) grid (3,4); %
    \draw (2,3) grid (4,5);%

    \draw (-0.25,0.5) node {\footnotesize$1$};%
    \draw (0.75,0.5) node {\footnotesize$2$};%
    \draw (1.75,0.5) node {\footnotesize$3$};%

    \draw (-0.25,1.5) node {\footnotesize$2$};%
    \draw (0.75,1.5) node {\footnotesize$3$};%
    \draw (1.75,1.5) node {\footnotesize$4$};%
    \draw (2.75,1.5) node {\footnotesize$5$};%

    \draw (0.75,2.5) node {\footnotesize$4$};%
    \draw (1.75,2.5) node {\footnotesize$5$};%
    \draw (2.75,2.5) node {\footnotesize$6$};%

    \draw (0.75,3.5) node {\footnotesize$5$};%
    \draw (1.75,3.5) node {\footnotesize$6$};%
    \draw (2.75,3.5) node {\footnotesize$7$};%
    \draw (3.75,3.5) node {\footnotesize$8$};%

    \draw (1.75,4.5) node {\footnotesize$7$};%
    \draw (2.75,4.5) node {\footnotesize$8$};%
    \draw (3.75,4.5) node {\footnotesize$9$};%    
  \end{tikzpicture}
  \hspace{1cm}
  \begin{tikzpicture}[scale=1.3]
    \node[inner sep = 0.3mm] (em) at (0,0) {\footnotesize
      $\emptyset$};%
    \node[inner sep = 0.3mm] (1a) at (-1,0.75) {\footnotesize
      $\displaystyle\genfrac{}{}{0pt}{}{[3]}{2}$};%
    \node[inner sep = 0.3mm] (1b) at (1,0.75) {\footnotesize
      $\displaystyle\genfrac{}{}{0pt}{}{[7,9]}{2}$};%
    \node[inner sep = 0.3mm] (2a) at (-1.5,1.5) {\footnotesize
      $\displaystyle\genfrac{}{}{0pt}{}{[6]}{4}$};%
    \node[inner sep = 0.3mm] (2b) at (0,1.5) {\footnotesize
      $\displaystyle\genfrac{}{}{0pt}{}{[3]\cup[7,9]}{4}$};%
    \node[inner sep = 0.3mm] (2c) at (1.5,1.5) {\footnotesize
      $\displaystyle\genfrac{}{}{0pt}{}{[4,9]}{4}$};%
    \node[inner sep = 0.3mm] (3) at (0,2.35) {\footnotesize
      $\displaystyle\genfrac{}{}{0pt}{}{[9]}{5}$ };%

    \foreach \from/\to in {em/1a,em/1b,1a/2a,1a/2b,1b/2b,1b/2c,2a/3,
      2b/3,2c/3} \draw(\from)--(\to);%
\end{tikzpicture}  
\caption{The region bounded by two paths $P$ and $Q$ that give rise to
  a lattice path matroid, along with its lattice of cyclic flats, with
  the rank of each cyclic flat shown below the flat.  Each north step
  is labeled with the position it would have in any lattice path from
  $(0,0)$ to $(4,5)$ that uses that step.}
  \label{fig:runningLPMex}
\end{figure}
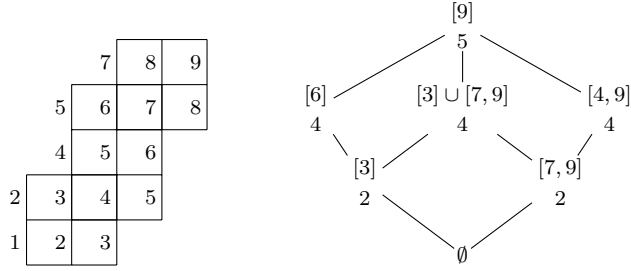

The transversals of the path presentation $(N_1,N_2,\ldots,N_r)$,
i.e., the bases of $M[P,Q]$, are the sets of positions of the north
steps in the lattice paths that go from $(0,0)$ to $(m,r)$ and remain
in the region that is bounded by $P$ and $Q$ (see \cite[Theorem
3.3]{lpm}).  It is easy to show that the lattice path matroid $M[P,Q]$
is connected if and only if the paths $P$ and $Q$ meet only at their
common endpoints, $(0,0)$ and $(m,r)$ (see \cite[Theorem 3.6]{lpm} and
the lattice path interpretation of direct sums discussed before it).

A \emph{nested matroid} is a lattice path matroid where either the
lower path $P$ has the form $E^{n-r}N^r$ or the upper path $Q$ has the
form $N^rE^{n-r}$.  When $P$ is $E^{n-r}N^r$ and $Q$ is $N^rE^{n-r}$,
the nested matroid is the uniform matroid $U_{r,n}$.  A matroid is
nested if and only if its lattice of cyclic flats is a chain (see
\cite[Lemma 2]{opr}; this also follows easily from the ideas in the
proof of Theorem \ref{thm:sameconfig} below).

The next result is from \cite[Lemma 5.2 and Theorem 5.7]{lpm2}.
\emph{Trivial flats} are the flats $X$ with $r(X)=|X|$.
\emph{Connected flats} are the flats $X$ for which $M|X$ is connected.
(In \cite{lpm2}, the term \emph{fundamental flats} refers to the flats
that satisfy the first property below.)

\begin{lemma}\label{thm:cflats}
  Let $M$ be a connected lattice path matroid on $[n]$ that is not a
  nested matroid.  There are two chains of proper, nontrivial,
  connected flats, $F_1\subsetneq F_2\subsetneq\cdots\subsetneq F_h$ and
  $G_1\subsetneq G_2\subsetneq\cdots\subsetneq G_k$, in $M$ that have the
  following properties:
  \begin{itemize}
  \item the flats in those chains are precisely the proper,
    nontrivial, connected flats $X$ for which, for some spanning
    circuit $C$ of $M$, the set $C\cap X$ is a basis of $M|X$,
  \item each $F_i$ is an interval $[a]$ and each $G_j$ is an interval
    $[b,n]$, and
  \item the other proper, nontrivial, connected flats of $M$ are the
    intersections $F_i\cap G_j$ for which
    $\eta(M)<\eta(F_i)+\eta(G_j)$, where $\eta(X)$ is the nullity of
    $X$, that is, $|X|-r(X)$.
  \end{itemize}
\end{lemma}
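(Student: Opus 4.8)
The plan is to work throughout with the path presentation $(N_1,\dots,N_r)$, writing $N_k=[l_k,u_k]$ and using that the $l_k$ and the $u_k$ are strictly increasing (so no $N_j$ is contained in another), while connectedness of $M$ forces $l_1=1$, $u_r=n$, and $u_k\ge l_{k+1}$ for all $k$, i.e.\ consecutive intervals overlap. Since a connected flat with at least two elements has no coloops, it is cyclic, so Corollary \ref{cor:bmot} applies and gives $r([a])=|\{k:l_k\le a\}|$ and $r([b,n])=|\{k:u_k\ge b\}|$. The first tool I would set up is a description of the spanning circuits: because the bases are exactly the increasing transversals $b_1<\dots<b_r$ with $b_k\in N_k$, a set $C=\{c_0<c_1<\dots<c_r\}$ is a spanning circuit precisely when $c_0\in N_1$, $c_r\in N_r$, and $c_s\in N_s\cap N_{s+1}=[l_{s+1},u_s]$ for $1\le s\le r-1$; deleting $c_0$ forces $c_k\in N_k$ and deleting $c_r$ forces $c_{k-1}\in N_k$, and conversely any such $C$ has every $r$-element subset a basis. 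Connectedness makes each overlap nonempty, so spanning circuits exist with freedom to place each $c_s$ anywhere in $[l_{s+1},u_s]$.

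Next I would show that every connected flat $X$ is an interval. If not, it has a gap: consecutive elements $b<d$ of $X$ with no element of $X$ strictly between them. Were some $N_k$ to meet $X$ both in an element $\le b$ and in an element $\ge d$, then, being an interval, $N_k$ would contain $[b,d]$, and any presentation set meeting the gap but disjoint from $X$ would be forced inside $[b+1,d-1]\subseteq N_k$, impossible since no $N_j$ lies in another; hence no presentation set would separate a gap element from $X$, putting that element in $\cl(X)=X$, a contradiction. So no $N_k$ bridges the gap, the restricted presentation partitions, and $M|X$ is a direct sum, contradicting connectedness. Thus $X=[i,j]$; I call it \emph{initial} if $i=1$, \emph{final} if $j=n$, and \emph{middle} if $1<i\le j<n$. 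The initial connected flats form a chain $F_1\subsetneq\dots\subsetneq F_h$ and the final ones a chain $G_1\subsetneq\dots\subsetneq G_k$, automatically, being nested intervals.

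The crux is the spanning-circuit criterion. With $[p,q]=\{k:N_k\cap[i,j]\ne\emptyset\}$, so $r([i,j])=q-p+1$, the description of $C$ shows $c_s\in[i,j]$ is possible only for $p\le s\le q-1$, while the endpoints $c_0,c_r$ can enter $[i,j]$ only when the interval reaches index $1$ or index $r$. For an initial flat $[a]$, taking $c_s=l_{s+1}$ and using $l_{r([a])}\le a<l_{r([a])+1}$ makes exactly $r([a])$ of the $c_s$ lie in $[a]$, so $C\cap[a]$ is a basis of $M|[a]$; the final case follows by the symmetry $M\mapsto M^*$, under which lattice path matroids are preserved and complementation interchanges initial and final intervals. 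For a middle flat, flatness is exactly what blocks the endpoints: closedness at $i$ produces an index with $u_k=i-1$ and closedness at $j$ an index with $l_{k'}=j+1$, forcing $p\ge2$ and $q\le r-1$; hence $c_0,c_r\notin[i,j]$ and $|C\cap[i,j]|\le q-p=r([i,j])-1$ for \emph{every} spanning circuit. Thus the connected flats meeting a spanning circuit in a basis are exactly the initial and final ones, which is the first two bullets.

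Finally, for a middle connected flat $[i,j]$ the same indices show $[1,j]$ and $[i,n]$ are proper nontrivial connected flats, so $[i,j]=[1,j]\cap[i,n]$ is an intersection of a flat in the $F$-chain with one in the $G$-chain. For any initial flat $[a]$ and final flat $[b,n]$ with $b\le a$, the index sets $\{k:l_k\le a\}$ and $\{k:u_k\ge b\}$ cover $[r]$---a missing index $k$ would force $N_k\subseteq[a+1,b-1]=\emptyset$---so inclusion--exclusion gives $r([a])+r([b,n])=r+\mu^*$ with $\mu^*=|\{k:N_k\cap[b,a]\ne\emptyset\}|$; when $[b,a]$ is a connected (hence cyclic) flat this becomes $\eta([b,a])=\eta([a])+\eta([b,n])-\eta(M)$, so its nontriviality is equivalent to $\eta(M)<\eta([a])+\eta([b,n])$. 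The step I expect to be the main obstacle is the converse direction of the third bullet: that \emph{every} intersection $F_i\cap G_j$ satisfying the inequality is genuinely a connected flat, not merely a nontrivial one. This is delicate because the restricted presentation $(N_s\cap[b,a])$ is not in canonical form, and overlapping truncated intervals need not give a connected matroid---a free matroid is the warning example. I would handle it by rewriting the inequality as $\mu^*<a-b+1$ and using the connectedness of $M$ to rule out any contact of the bounding paths inside $[b,a]$, so that $r([b,a])=\mu^*$ (hence $[b,a]$ is cyclic) and $M|[b,a]$ has no direct-sum splitting; equivalently, one checks connectedness directly on the canonical path presentation of the minor $M|[b,a]$.
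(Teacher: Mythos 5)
A preliminary remark: the paper does not prove this lemma at all --- it is imported verbatim from \cite[Lemma 5.2 and Theorem 5.7]{lpm2} --- so your attempt can only be judged on its own merits, not against an internal argument. Much of what you do is correct and efficiently organized: the description of spanning circuits as sets $\{c_0<\cdots<c_r\}$ with $c_0\in N_1$, $c_s\in[l_{s+1},u_s]$, $c_r\in N_r$; the gap argument showing connected flats are intervals; the explicit circuit $\{l_1<l_2<\cdots<l_r<n\}$ witnessing the spanning-circuit property for initial flats; and the observation that flatness of a middle flat $[i,j]$ forces indices with $u_k=i-1$ and $l_{k'}=j+1$, hence $p\geq 2$ and $q\leq r-1$, so every spanning circuit meets $[i,j]$ in at most $r([i,j])-1$ elements. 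These parts are sound.

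There are, however, three genuine gaps. First, the final-flat case does not follow from ``the symmetry $M\mapsto M^*$'': duality exchanges initial and final connected flats (Lemma \ref{lem:complements}), but it does not carry spanning circuits of $M$ to spanning circuits of $M^*$ (a spanning circuit of $M$ corresponds to an \emph{independent hyperplane} of $M^*$), so the property ``some spanning circuit meets $X$ in a basis of $M|X$'' does not transfer; what works is the $180^\circ$ rotation $i\mapsto n+1-i$, which is an isomorphism onto $M[Q^t,P^t]$, or simply the mirrored construction $c_s=u_s$. Second, in the forward direction of the third bullet you assert that $[1,j]$ and $[i,n]$ are proper nontrivial connected flats ``by the same indices,'' but none of flatness, nontriviality, or (especially) connectedness of $[1,j]$ is proved, and connectedness is not automatic for interval restrictions. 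Third, and most seriously, the converse of the third bullet --- that every $F_i\cap G_j$ satisfying $\eta(M)<\eta(F_i)+\eta(G_j)$ is a \emph{connected} flat --- is left as a plan, and the plan's key step is false as stated: connectedness of $M$ does not ``rule out any contact of the bounding paths inside $[b,a]$.'' The paper's own Figure \ref{fig:runningLPMexdel} exhibits restrictions of a connected lattice path matroid to intervals ($M|[2,7]$ and $M|[3,5]$) that are disconnected or free. The inequality, rewritten as $\mu^*<a-b+1$, does give $r([b,a])=\mu^*$ by Lemma \ref{lem:rankinLPM}, but rank deficiency alone implies neither cyclicity nor connectedness of $M|[b,a]$ (think of a circuit plus coloops); one must actually use that $b$ and $a$ arise from an $NE$ corner of $P$ and an $EN$ corner of $Q$, so that the sub-diagram's bounding paths are pinned at both ends, and combine that with the inequality. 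That argument is the real content of the third bullet, and it is missing.
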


The flats $F_i$ of $M[P,Q]$ above are the intervals $[a]$ for which
steps $a$ and $a+1$ of the upper path $Q$ are east and north,
respectively.  The point at which such steps $a$ and $a+1$ of $Q$ meet
is an \emph{$EN$ corner of $Q$} and we call $F_i=[a]$ an \emph{initial
  connected flat}.  The flats $G_j$ are the intervals $[b,n]$ for
which steps $b-1$ and $b$ of the lower path $P$ are north and east,
respectively.  The point at which such steps $b-1$ and $b$ of $P$ meet
is an \emph{$NE$ corner of $P$} and we call $G_j=[b,n]$ a \emph{final
  connected flat}.  By Lemma \ref{thm:cflats}, each connected flat of
$M[P,Q]$ is an interval in $[n]$.

Observe that if the lattice path diagram for $M[P,Q]$ is rotated
$180^\circ$ about $(m/2,r/2)$, so step $i$ becomes step $n+1-i$, then
the original initial connected flats give rise to the final connected
flats after the rotation, and likewise with initial and final
switched.  The lower path $P$ gives the upper path $P^t$ after the
rotation, where the steps in $P^t$ are those of $P$ but in the reverse
order; likewise, the upper path $Q$ gives the lower path $Q^t$ after
the rotation.  The bijection mapping $i$ to $n+1-i$ is an isomorphism
of $M[P,Q]$ onto $M[Q^t,P^t]$.  If we know the size and rank of each
flat in the two chains identified in Theorem \ref{thm:cflats}, then we
know $P$ and $Q$ up to the $180^\circ$ rotation, and so we know
$M[P,Q]$ up to isomorphism.  (See \cite[Theorem 5.6]{lpm2} and the
discussion before it.)

The dual of a lattice path matroid $M$ is also a lattice path matroid;
its diagram is obtained by flipping the diagram for $M$ around the
line $y=x$.  This holds because, in any lattice path, this flip
switches the steps that are not in the corresponding basis (east
steps) with those that are in the basis (north steps).  A consequence
of this is \cite[Corollary 5.5]{lpm2}, which we state next.

\begin{lemma}\label{lem:complements}
  For a connected lattice path matroid $M$ on $[n]$, the interval
  $[a]$ is an initial connected flat of $M$ if and only if its
  complement $[a+1,n]$ is a final connected flat of the dual $M^*$.
  The same holds with $M$ and $M^*$ switched.
\end{lemma}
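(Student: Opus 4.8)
The plan is to read off the two kinds of corner directly from the path diagram of the dual. Write $M=M[P,Q]$ with lower path $P$ and upper path $Q$, both running from $(0,0)$ to $(m,r)$ with $n=m+r$ steps. For a lattice path $R=r_1r_2\cdots r_n$, let $\bar R$ denote the path whose $i$-th step is north when $r_i$ is east and east when $r_i$ is north. As recalled just above the lemma, the diagram of $M^*$ is obtained by reflecting the diagram of $M$ in the line $y=x$; this reflection fixes $(0,0)$, preserves the order in which the steps are traversed, and turns each north step into an east step and conversely, so it sends $P$ to $\bar P$ and $Q$ to $\bar Q$. First I would check that the reflection reverses the ``never goes above'' relation: in any prefix, the number of north steps of $\bar R$ equals the number of east steps of $R$, so $P$ never going above $Q$ is equivalent to $\bar Q$ never going above $\bar P$. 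Hence $M^*=M[\bar Q,\bar P]$, with lower path $\bar Q$ and upper path $\bar P$.

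Next I would translate the two corner conditions against this identification. By definition, $[a]$ is an initial connected flat of $M$ precisely when steps $a$ and $a+1$ of the upper path $Q$ are east and north, respectively (an $EN$ corner of $Q$). Since $\bar Q$ interchanges north and east step by step, this holds if and only if steps $a$ and $a+1$ of $\bar Q$ are north and east, respectively, that is, exactly when $\bar Q$ has an $NE$ corner at the junction of steps $a$ and $a+1$. Because $\bar Q$ is the lower path of $M^*$, such an $NE$ corner is, by definition, what produces the final connected flat $[a+1,n]$ of $M^*$. As $[a+1,n]=[n]\del [a]$, this yields the asserted equivalence: $[a]$ is an initial connected flat of $M$ if and only if its complement $[a+1,n]$ is a final connected flat of $M^*$ (the index ranges $1\le a\le n-1$ and $2\le a+1\le n$ match under $b=a+1$).

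Finally, the clause ``the same holds with $M$ and $M^*$ switched'' follows by applying the equivalence just proved to $M^*$ in place of $M$ together with $M^{**}=M$; alternatively, the whole argument is symmetric under interchanging the roles of the lower and upper paths, equivalently of initial and final connected flats, so the mirror statement needs no separate work. The only real content beyond this bookkeeping is the orientation claim in the first paragraph---namely that reflecting in $y=x$ both swaps north with east and interchanges which of the two bounding paths is the lower one. I expect pinning that down to be the one delicate step; once it is settled, matching the $EN$ corners of $Q$ with the $NE$ corners of $\bar Q$ is immediate, and the complementation $[a]\mapsto[a+1,n]$ is forced by $b=a+1$.
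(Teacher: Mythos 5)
Your proof is correct and follows essentially the same route as the paper, which states this lemma as a direct consequence of the fact (recalled just before it, and originating in \cite[Corollary 5.5]{lpm2}) that the diagram of $M^*$ is the reflection of the diagram of $M$ about the line $y=x$. Your write-up simply makes that argument explicit: the reflection swaps north and east steps while preserving step order, hence exchanges the roles of lower and upper bounding paths and matches each $EN$ corner of $Q$ at the junction of steps $a$ and $a+1$ with an $NE$ corner of the lower path of $M^*$ at the same junction, giving exactly the complementary pair $[a]$ and $[a+1,n]$.
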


In addition to being closed under duality, the class of lattice path
matroids is also closed under direct sums and minors (see
\cite[Theorem 3.1]{lpm2}).  We will use just the special case of
restriction to an interval, the endpoints of which are not loops.
Given the lattice path diagram for $M[P,Q]$, to obtain the diagram for
the restriction of $M[P,Q]$ to an interval $[a,b]$, where neither $a$
nor $b$ is a loop of $M[P,Q]$, consider the lowest north step that can
be step $a$ in a path and the highest north step that can be step $b$
in a path; if the former is not strictly to the right of the latter,
then the restriction of $M[P,Q]$ to $[a,b]$ is represented by the
region of the diagram for $M[P,Q]$ that is between the two steps just
identified; otherwise the restriction is a free matroid.  This is
illustrated in Figure \ref{fig:runningLPMexdel} and it gives the
following lemma.

\begin{figure}
  \centering
  \begin{tikzpicture}[scale=0.6]
    \draw (0,0) grid (2,2); %
    \draw (1,1) grid (3,4); %
    \draw (2,3) grid (4,5);%

    \draw (-0.25,0.5) node {\footnotesize$1$};%
    \draw (0.75,0.5) node {\footnotesize$2$};%
    \draw (1.75,0.5) node {\footnotesize$3$};%

    \draw (-0.25,1.5) node {\footnotesize$2$};%
    \draw (0.75,1.5) node {\footnotesize$3$};%
    \draw (1.75,1.5) node {\footnotesize$4$};%
    \draw (2.75,1.5) node {\footnotesize$5$};%

    \draw (0.75,2.5) node {\footnotesize$4$};%
    \draw (1.75,2.5) node {\footnotesize$5$};%
    \draw (2.75,2.5) node {\footnotesize$6$};%

    \draw (0.75,3.5) node {\footnotesize$5$};%
    \draw (1.75,3.5) node {\footnotesize$6$};%
    \draw (2.75,3.5) node {\footnotesize$7$};%
    \draw (3.75,3.5) node {\footnotesize$8$};%

    \draw (1.75,4.5) node {\footnotesize$7$};%
    \draw (2.75,4.5) node {\footnotesize$8$};%
    \draw (3.75,4.5) node {\footnotesize$9$};%    
    \node at (2,-1) {$M$};%
  \end{tikzpicture}
  \hspace{1cm}
  \begin{tikzpicture}[scale=0.6]
    \draw (1,0) grid (2,2); %
    \draw (1,1) grid (3,4); %
    \draw (2,3) grid (3,5);%

    \draw (0.75,0.5) node {\footnotesize$2$};%
    \draw (1.75,0.5) node {\footnotesize$3$};%

    \draw (0.75,1.5) node {\footnotesize$3$};%
    \draw (1.75,1.5) node {\footnotesize$4$};%
    \draw (2.75,1.5) node {\footnotesize$5$};%

    \draw (0.75,2.5) node {\footnotesize$4$};%
    \draw (1.75,2.5) node {\footnotesize$5$};%
    \draw (2.75,2.5) node {\footnotesize$6$};%

    \draw (0.75,3.5) node {\footnotesize$5$};%
    \draw (1.75,3.5) node {\footnotesize$6$};%
    \draw (2.75,3.5) node {\footnotesize$7$};%

    \draw (1.75,4.5) node {\footnotesize$7$};%
    \draw (2.75,4.5) node {\footnotesize$8$};%
    \node at (2,-1) {$M|[2,8]$};%
  \end{tikzpicture}
  \hspace{1cm}
  \begin{tikzpicture}[scale=0.6]
    \draw (1,0) grid (2,4); %
    \draw (2,5) -- (2,4); %

    \draw (0.75,0.5) node {\footnotesize$2$};%
    \draw (1.75,0.5) node {\footnotesize$3$};%

    \draw (0.75,1.5) node {\footnotesize$3$};%
    \draw (1.75,1.5) node {\footnotesize$4$};%

    \draw (0.75,2.5) node {\footnotesize$4$};%
    \draw (1.75,2.5) node {\footnotesize$5$};%

    \draw (0.75,3.5) node {\footnotesize$5$};%
    \draw (1.75,3.5) node {\footnotesize$6$};%

    \draw (1.75,4.5) node {\footnotesize$7$};%

    \node at (1.5,-1) {$M|[2,7]$};%
  \end{tikzpicture}
  \caption{The diagrams representing a lattice path matroid $M$ and
    its restrictions to $[2,8]$ and $[2,7]$.  The restriction to
    $[3,5]$ is the free matroid $U_{3,3}$ on $[3,5]$, and the
    restriction to $[2,5]$ is also free.}
  \label{fig:runningLPMexdel}
\end{figure}
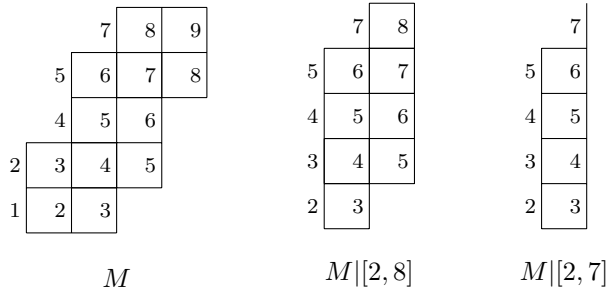

\begin{lemma}\label{lem:rankinLPM}
  Let $M$ be a lattice path matroid on $[n]$ and let the path
  presentation of $M$ be $(N_1,N_2,\ldots,N_r)$.  For an interval
  $X=[a+1,a+k]$ in $[n]$, if the sets $N_i$ with
  $N_i\cap X\ne \emptyset$ are $N_{j+1},N_{j+2},\ldots,N_{j+t}$, then
  $r(X) = \min(t,k)$.
\end{lemma}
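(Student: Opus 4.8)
The plan is to compute $r(X)$ directly as the maximum size of a partial transversal of the path presentation $(N_1,\ldots,N_r)$ that is contained in $X$, exploiting the fact that each $N_i=[l_i,u_i]$ is an interval and that the endpoints are \emph{strictly} monotone: $l_1<l_2<\cdots<l_r$ and $u_1<u_2<\cdots<u_r$. First I would record the consequences of this strict monotonicity together with the hypothesis that $N_{j+1},\ldots,N_{j+t}$ are exactly the sets meeting $X=[a+1,a+k]$. Each such set satisfies $l_{j+i}\le a+k$ and $u_{j+i}\ge a+1$, and strict monotonicity of the left endpoints upgrades the former to $l_{j+i}\le a+k-(t-i)$ for $1\le i\le t$. (That the sets meeting $X$ form a consecutive block $N_{j+1},\ldots,N_{j+t}$ is itself immediate from monotonicity, so this part of the statement is really just a labeling convention.)

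For the upper bound, $r(X)\le k$ since $|X|=k$, and $r(X)\le t$ because any partial transversal contained in $X$ is matched injectively to distinct sets, each of which meets $X$, and only $t$ sets meet $X$. Hence $r(X)\le\min(t,k)$.

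The substance is the matching lower bound: I would exhibit a partial transversal of size $s=\min(t,k)$ by the greedy assignment $x_1=\max(a+1,l_{j+1})$ and $x_i=\max(x_{i-1}+1,l_{j+i})$ for $2\le i\le s$, sending $x_i$ to the set $N_{j+i}$. By construction the $x_i$ are strictly increasing and satisfy $x_i\ge l_{j+i}$ and $x_i\ge a+1$, so the two things left to verify are that each $x_i$ lies in $N_{j+i}$ (that is, $x_i\le u_{j+i}$) and that each $x_i$ lies in $X$ (that is, $x_i\le a+k$). The first follows by induction from $x_{i-1}\le u_{j+(i-1)}\le u_{j+i}-1$, which is exactly where the strict increase of the right endpoints is used. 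For the second, tracking $\delta_i=x_i-(a+i)$ gives the recursion $\delta_i=\max(\delta_{i-1},\,l_{j+i}-a-i)$ with $\delta_1=\max(0,l_{j+1}-a-1)$, whence $\delta_i\le\max(0,k-t)$ using the bound $l_{j+m}\le a+k-(t-m)$; for $i\le s$ this yields $x_i\le a+k$. Thus $\{x_1,\ldots,x_s\}$ is a set of $s$ distinct elements of $X$ forming a partial transversal, so $r(X)\ge\min(t,k)$, and the two bounds together give the claim.

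The step I expect to be the main obstacle is controlling the greedy so that it neither runs off the right end of $X$ nor lands outside a set $N_{j+i}$; both potential failures are ruled out precisely by the strict monotonicity of the two endpoint sequences, which is the structural feature distinguishing path presentations from arbitrary presentations by intervals. I would also keep an eye on degenerate cases (for instance, loops in $X$, which force the relevant left endpoints to exceed $a+1$), but these are absorbed automatically by the $\max$ in the definition of $x_i$ and by stopping the assignment after $s=\min(t,k)$ steps.
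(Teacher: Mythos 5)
Your proof is correct, and it takes a genuinely different route from the paper. The paper does not prove this lemma by a matching argument at all: it reads the lemma off from the geometric description of restrictions of lattice path matroids to intervals, namely that the restriction of $M[P,Q]$ to $[a,b]$ (when neither endpoint is a loop) is represented by the sub-diagram lying between the lowest north step that can be step $a$ and the highest north step that can be step $b$, or else is a free matroid; the rank formula is then immediate from that picture. You instead work directly with the path presentation: the upper bound $r(X)\le\min(t,k)$ by injectivity of a partial-transversal matching into the $t$ sets meeting $X$, and the lower bound via the explicit greedy transversal $x_1=\max(a+1,l_{j+1})$, $x_i=\max(x_{i-1}+1,l_{j+i})$. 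The two verifications are exactly where strict monotonicity enters, and both are sound: $x_i\le u_{j+i}$ follows inductively from $u_{j+i-1}\le u_{j+i}-1$, and $x_i\le a+k$ follows from the upgraded bound $l_{j+m}\le a+k-(t-m)$ together with your $\delta_i$ recursion, which gives $\delta_i\le\max(0,k-t)$ and hence $x_i\le a+k$ in both cases $k\le t$ and $k>t$. What your approach buys: it is self-contained, needing neither the closure of lattice path matroids under minors nor the diagram interpretation of restriction, and it absorbs loops and degenerate cases automatically, with no side condition on the endpoints of $X$. What the paper's approach buys: the lemma becomes a one-line consequence of structure theory it has already set up, and the same picture identifies the restriction $M|X$ itself up to isomorphism (a sub-diagram lattice path matroid or a free matroid), not merely its rank.
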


\section{A construction to produce matroids having the same
  configuration as a given matroid}\label{sec:construction}

The next theorem gives the central construction of the paper.
Throughout the paper, we set
$\mathcal{Z}_e=\{A\in\mathcal{Z}(M)\,:\, e\in A\}$ for any
$e\in E(M)$.  To start with an example for motivation, consider the
matroid $M$ in Figure \ref{fig:sameconfig}, and the elements $1$ and
$4$.  We have $\mathcal{Z}_1=\{[3],[6]\}$ and
$\mathcal{Z}_4=\{[4,6],[6]\}$.  Both $\mathcal{Z}_1-\mathcal{Z}_4$ and
$\mathcal{Z}_4-\mathcal{Z}_1$ are nonempty, and the only pair $(A,B)$
with $A\in \mathcal{Z}_1-\mathcal{Z}_4$ and
$B\in \mathcal{Z}_4-\mathcal{Z}_1$ is non-modular.  We get the cyclic
flats of the matroid $M'$ in Figure \ref{fig:sameconfig}, which has
the same configuration as $M$, by taking the cyclic flat in
$\mathcal{Z}_4-\mathcal{Z}_1$, namely, $[4,6]$, and replacing $4$ by
$1$ to get $\{1,5,6\}$.

\begin{thm}\label{thm:twofilters}
  Let $M$ be a matroid.  Assume that for some $x,y\in E(M)$, (i) both
  $\mathcal{Z}_x-\mathcal{Z}_y$ and $\mathcal{Z}_y-\mathcal{Z}_x$ are
  nonempty, and (ii) if $X\in\mathcal{Z}_x-\mathcal{Z}_y$ and
  $Y\in\mathcal{Z}_y-\mathcal{Z}_x$, then $(X,Y)$ is not a modular
  pair.  For each $Y\in\mathcal{Z}_y-\mathcal{Z}_x$, let
  $Y_x=(Y-y)\cup x$, let
  $$\mathcal{Z}' =(\mathcal{Z}(M) -(\mathcal{Z}_y-\mathcal{Z}_x))\cup
  \{Y_x\,:\, Y\in \mathcal{Z}_y-\mathcal{Z}_x\},$$ and let
  $r':\mathcal{Z}'\to\mathbb{Z}$ be given by $r'(A)=r_M(A)$ if
  $A\in\mathcal{Z}(M)$, and $r'(Y_x)=r_M(Y)$ if
  $Y\in\mathcal{Z}_y-\mathcal{Z}_x$.  Then the pair
  $(\mathcal{Z}',r')$ satisfies properties (Z0)--(Z3) in Theorem
  \ref{thm:axioms} and so defines a matroid $M'$ on $E(M)$.  The
  matroids $M$ and $M'$ have the same configuration but are not
  isomorphic.  Also, the matroid $M'$ is isomorphic to the matroid
  that results from the construction above with the roles of $x$ and
  $y$ switched.
\end{thm}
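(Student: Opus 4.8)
The plan is to realize the passage from $M$ to $M'$ as a single size-preserving isomorphism of lattices of cyclic flats, and to pinpoint the one place where hypothesis (ii) is indispensable. Write $\mathcal{S}=\mathcal{Z}_y-\mathcal{Z}_x$, $\mathcal{R}=\mathcal{Z}_x-\mathcal{Z}_y$, and $\mathcal{T}=\mathcal{Z}(M)-\mathcal{S}$, and define $\psi\colon\mathcal{Z}(M)\to\mathcal{Z}'$ by $\psi(A)=A$ for $A\in\mathcal{T}$ and $\psi(Y)=Y_x$ for $Y\in\mathcal{S}$. First I would check that $\psi$ is a bijection; the crux is that no $Y_x$ with $Y\in\mathcal{S}$ can already be a cyclic flat of $M$. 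If some $Y_x$ were, then since $Y_x$ and $Y$ are flats that are unions of circuits we get $x\in\cl(Y-y)$ and $y\in\cl(Y-y)$, so $r(Y_x)=r(Y-y)=r(Y)$, while $Y_x\cup Y=Y\cup x$ has rank $r(Y)+1$ and $Y_x\cap Y=Y-y$ has rank $r(Y)$; submodularity of $(Y_x,Y)$ then gives $2r(Y)\ge 2r(Y)+1$, a contradiction. Thus the $Y_x$ are distinct from one another and from all of $\mathcal{T}$, so $\psi$ is a size-preserving bijection. I would then verify that $\psi$ is an order isomorphism: one direction is immediate from the definitions, and for the other, a comparability $A\subsetneq B$ newly created by $\psi$ forces an intersection/union configuration whose ranks again violate submodularity (using only that cyclic flats are flats that are unions of circuits). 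This gives (Z0), meets and joins of $\mathcal{Z}'$ then correspond to those of $\mathcal{Z}(M)$, and since $r'(\psi(A))=r_M(A)$ by definition, (Z1) holds because $\cl(\emptyset)$ contains neither $x$ nor $y$ (else (i) fails) and so is fixed with rank $0$, while (Z2) holds because for $X'=\psi(A)\subsetneq Y'=\psi(B)$ we have $|Y'-X'|=|Y'|-|X'|=|B|-|A|=|B-A|$, making (Z2) for $\mathcal{Z}'$ literally (Z2) for $M$.

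The main obstacle is (Z3), and this is exactly where (ii) is used. For incomparable $X'=\psi(A)$ and $Y'=\psi(B)$, the rank terms of (Z3) match those of the corresponding inequality in $M$ (as $\psi$ respects meets, joins, and rank), and $|X'\meet Y'|=|A\meet B|$ since $\psi$ preserves size. Subtracting the two instances of (Z3) and using $r(A\meet B)-|A\meet B|=r(A\cap B)-|A\cap B|$, I would reduce (Z3) for $\mathcal{Z}'$ to the single inequality
$$|X'\cap Y'|-|A\cap B|\le r(A)+r(B)-r(A\cup B)-r(A\cap B),$$
whose right-hand side is the modular defect of $(A,B)$. Because $\psi$ alters only the membership of $x$ and $y$, and because $y\in X'\cap Y'$ forces $y\in A\cap B$, the left-hand side is at most $1$, and it equals $1$ exactly when one of $A,B$ lies in $\mathcal{R}$ and the other in $\mathcal{S}$. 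For every such pair, hypothesis (ii) makes the modular defect at least $1$, so (Z3) holds; in all other cases the left-hand side is at most $0$ and (Z3) reduces to submodularity in $M$. Theorem \ref{thm:axioms} then produces the matroid $M'$ with $\mathcal{Z}(M')=\mathcal{Z}'$ and $r_{M'}=r'$. I expect this case isolation, matching the unique size-increasing pair type to non-modularity, to be the heart of the whole argument.

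With $M'$ in hand, equality of configurations is immediate, since $\psi$ is a lattice isomorphism $\mathcal{Z}(M)\to\mathcal{Z}(M')$ with $|\psi(A)|=|A|$ and $r_{M'}(\psi(A))=r_M(A)$, and $|E(M')|=|E(M)|$. For non-isomorphism I would use the isomorphism-invariant multiset $\{|\mathcal{Z}_e(N)|:e\in E(N)\}$. Since $\psi$ changes only the membership of $x$ and $y$, one gets $|\mathcal{Z}_e(M')|=|\mathcal{Z}_e(M)|$ for all $e\ne x,y$, while $|\mathcal{Z}_x(M')|=|\mathcal{Z}_x\cup\mathcal{Z}_y|$ and $|\mathcal{Z}_y(M')|=|\mathcal{Z}_x\cap\mathcal{Z}_y|$. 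By (i) the filters $\mathcal{Z}_x$ and $\mathcal{Z}_y$ are incomparable, so $\{|\mathcal{Z}_x\cup\mathcal{Z}_y|,|\mathcal{Z}_x\cap\mathcal{Z}_y|\}$ and $\{|\mathcal{Z}_x|,|\mathcal{Z}_y|\}$ differ as multisets; since the two multisets for $M$ and $M'$ agree elsewhere, they differ overall, and hence $M\not\cong M'$.

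Finally, for the symmetry under exchanging $x$ and $y$, let $M''$ be the matroid from the switched construction and let $\tau$ be the transposition of $x$ and $y$ on $E(M)$. Partitioning $\mathcal{Z}(M)$ by membership of $x,y$ into the four classes (both, neither, only $x$, only $y$), a direct computation shows that $\tau$ fixes the "both" and "neither" classes, sends $\mathcal{S}$ to $\{Y_x\}^{-1}$ (that is, $\tau(Y_x)=Y$) and sends $\mathcal{R}$ to $\{R_y:R\in\mathcal{R}\}$, yielding $\tau(\mathcal{Z}')=\mathcal{Z}''$; checking the four classes also shows $\tau$ carries $r_{M'}$ to $r_{M''}$. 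By the isomorphism criterion phrased via cyclic flats and their ranks, $\tau$ is an isomorphism of $M'$ onto $M''$, completing the final claim.
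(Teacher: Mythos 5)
Your proposal is correct, and its core follows the same route as the paper's proof: a size-preserving lattice isomorphism $\psi:\mathcal{Z}(M)\to\mathcal{Z}'$ transports (Z0)--(Z2) directly from $M$, and (Z3) is reduced, via the identity $r(A\meet B)-|A\meet B|=r(A\cap B)-|A\cap B|$ (the elements of $(A\cap B)-(A\meet B)$ being coloops of $M|(A\cap B)$), to the single case where the intersection size grows, which hypothesis (ii) handles; your four-class computation showing that $|X'\cap Y'|-|A\cap B|\leq 1$, with equality exactly for pairs from $(\mathcal{Z}_x-\mathcal{Z}_y)\times(\mathcal{Z}_y-\mathcal{Z}_x)$, is a more systematic rendering of the paper's remark that only the pairs $X,Y_x$ with $X\in\mathcal{Z}_x-\mathcal{Z}_y$ need extra care, and your symmetry argument via the transposition is the paper's. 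You deviate in two genuine ways, both sound. First, you explicitly prove injectivity of $\psi$ by ruling out $Y_x\in\mathcal{Z}(M)$, a well-definedness point the paper dismisses with ``by the construction''; your submodularity computation there is valid, though it can be shortened: once $x\in\cl(Y-y)$, the inclusion $\cl(Y-y)\subseteq\cl(Y)=Y$ gives $x\in Y$, contradicting $Y\in\mathcal{Z}_y-\mathcal{Z}_x$ outright. (Your order-isomorphism step is likewise only sketched, but each potential failure is indeed a routine closure contradiction of the kind you indicate --- e.g.\ if $A\in\mathcal{Z}_y-\mathcal{Z}_x$, $B\in\mathcal{Z}_x-\mathcal{Z}_y$ and $A-y\subseteq B$, then $y\in\cl(A-y)\subseteq B$ --- and the paper also asserts this step without proof.) Second, for non-isomorphism you use a different invariant: the multiset $\{|\mathcal{Z}_e|:e\in E\}$ of per-element counts of cyclic flats, with the clean identities $|\mathcal{Z}_x(M')|=|\mathcal{Z}_x\cup\mathcal{Z}_y|$ and $|\mathcal{Z}_y(M')|=|\mathcal{Z}_x\cap\mathcal{Z}_y|$, where hypothesis (i) forces $|\mathcal{Z}_x\cup\mathcal{Z}_y|>\max\bigl(|\mathcal{Z}_x|,|\mathcal{Z}_y|\bigr)$, so the multisets differ; the paper instead compares the multisets of sizes of pairwise intersections of cyclic flats, which grow strictly under $\phi$. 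Both invariants are correct; yours has the merit of showing exactly where hypothesis (i) (rather than (ii)) enters the non-isomorphism claim, while the paper's is the one that generalizes to its later uses of intersection data.
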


\begin{proof}
  By the construction, the map $\phi: \mathcal{Z}(M)\to \mathcal{Z}'$
  defined by
  $$\phi(Y) =
  \begin{cases}
    Y_x, & \text{ if } Y\in\mathcal{Z}_y-\mathcal{Z}_x,\\
    Y, & \text{ otherwise }
  \end{cases}$$
  is a bijection.  The following properties of $\phi$
  are  easy to check: for all $X,Y\in \mathcal{Z}(M)$,  
  \begin{enumerate}
  \item[(i)] $X\subseteq Y$ if and only if $\phi(X)\subseteq \phi(Y)$,
  \item[(ii)] $|\phi(X)\cap \phi(Y)|= |X\cap Y|+1$ if
    $X\in\mathcal{Z}_x-\mathcal{Z}_y$ and
    $Y\in\mathcal{Z}_y-\mathcal{Z}_x$, or vice versa; otherwise
    $|\phi(X)\cap \phi(Y)|= |X\cap Y|$.
  \end{enumerate}
  By property (i), $\phi$ is a lattice isomorphism from
  $(\mathcal{Z}(M),\subseteq)$ onto $(\mathcal{Z}',\subseteq)$, so
  property (Z0) holds for the pair $(\mathcal{Z}',r')$.  Properties
  (Z1)--(Z3) are direct to check, with only slightly more care needed
  for property (Z3) for sets $X$ and $Y_x$ where
  $X\in \mathcal{Z}_x-\mathcal{Z}_y$ and
  $Y\in \mathcal{Z}_y-\mathcal{Z}_x$.  For that, we have
  $$r_M(X\cap Y) = r_M(X\meet Y)+|(X\cap Y)-(X\meet Y)|$$
  since the elements of $(X\cap Y)-(X\meet Y)$ are the coloops of
  $M|(X\cap Y)$, so the assumption that $(X,Y)$ is not a modular pair
  gives
  $$r_M(X\join Y) + r_M(X\meet Y) + |(X\cap Y) - (X\meet Y)|<
  r_M(X)+r_M(Y).$$ Since
  $|(X\cap Y_x) - (X\meet Y_x)|=|(X\cap Y) - (X\meet Y)|+1$ by
  property (ii), we have
  $$r'(X\join Y_x) + r'(X\meet Y_x) + |(X\cap Y_x)
  - (X\meet Y_x)|\leq r'(X)+r'(Y_x),$$ so property (Z3) holds for $X$
  and $Y_x$.  Thus, the pair $(\mathcal{Z}',r')$ indeed defines a
  matroid $M'$ on $E(M)$.  Clearly the configuration of $M'$ is that
  of $M$. Also, $M$ and $M'$ are not isomorphic since the multisets of
  sizes of intersections of cyclic flats differ by property (ii).

  By construction, the cyclic flats of $M'$ that contain just one of
  $x$ and $y$ must contain $x$. If we switch the roles of $x$ and $y$,
  then the cyclic flats of the resulting matroid $M''$ that contain
  just one of $x$ and $y$ must contain $y$.  Thus, the transposition
  of $E(M)$ that switches $x$ and $y$ and fixes all other elements of
  $E(M)$ is an isomorphism of $M'$ onto $M''$.
\end{proof}

\begin{figure}
  \centering
  \begin{tikzpicture}[scale=1]

    \draw[thick] (210:2)-- (320:1.54);%
    \draw[thick] (210:2)-- (100:1.54);%
    \draw[thick] (80:1.54)-- (-20:1.54);%
    
    \filldraw (210:2) node[above left] {\small$x$} circle (2.5pt); %
    \filldraw (320:1.54) node[below=2] {\small$d$} circle (2.5pt); %
    \filldraw (260:1) node[below=4] {\small$c$} circle (2.5pt); %
    \filldraw (100:1.54) node[left=2] {\small$b$} circle (2.5pt); %
    \filldraw (160:1) node[left=2] {\small$a$} circle (2.5pt); %
    \filldraw (80:1.54) node[right=2] {\small$y$} circle (2.5pt); %
    \filldraw (-20:1.54) node[right=2] {\small$f$} circle (2.5pt); %
    \filldraw (30:1) node[right=2] {\small$e$} circle (2.5pt); %
 
    \node at (0,-1.75) {$M$};%
  \end{tikzpicture}
  \hspace{1cm}
    \begin{tikzpicture}[scale=1]

    \draw[thick] (210:2)-- (320:1.54);%
    \draw[thick] (210:2)-- (100:1.54);%
    \draw[thick] (210:2)-- (210:-1);%
    
    \filldraw (210:2) node[above left] {\small$x$} circle (2.5pt); %
    \filldraw (320:1.54) node[below=2] {\small$d$} circle (2.5pt); %
    \filldraw (260:1) node[below=4] {\small$c$} circle (2.5pt); %
    \filldraw (100:1.54) node[left=2] {\small$b$} circle (2.5pt); %
    \filldraw (160:1) node[left=2] {\small$a$} circle (2.5pt); %
    \filldraw (70:1.5) node[right=2] {\small$y$} circle (2.5pt); %
    \filldraw (210:0.5) node[above=2] {\small$f$} circle (2.5pt); %
    \filldraw (30:1) node[right=2] {\small$e$} circle (2.5pt); %
 
    \node at (0,-1.75) {$M'$};%
  \end{tikzpicture}
  \caption{The matroid $M$ is transversal, but the matroid $M'$ that
    results when $x$ bumps $y$ is not transversal.  Both matroids have
    rank $3$.}
  \label{fig:doesnotpreservetrans}
\end{figure}

We say that the matroid $M'$ in Theorem \ref{thm:twofilters} results
from $x$ \emph{bumping} $y$ in $M$.  Bumping need not preserve the
property of being transversal, as the example in Figure
\ref{fig:doesnotpreservetrans} shows.  (One can apply either Theorem
\ref{thm:mi} or the geometric view of transversal matroids to verify
that $M$ is transversal and $M'$ is not.)  Likewise, the example in
Figure \ref{fig:sameconfig} shows that representability over a given
field need not be preserved; in that figure, $M$ is ternary but $M'$
is not.

When $q$ is a prime power that is at least $9$ and not prime, there
can be non-isomorphic projective planes of order $q$; many
constructions of such planes are known (see, e.g., \cite{PP}).  All
projective planes of order $q$ have the same configuration.  The
hypotheses of Theorem \ref{thm:twofilters} never hold for two elements
in a projective plane, so bumping does not apply to such matroids.
Also, bumping does not produce projective planes since no two elements
$x$ and $y$ in a projective plane have the property noted above, that
any cyclic flat that contains one of $x$ or $y$ must contain $x$.
Thus, bumping does not account for all instances of matroids that have
the same configuration.

If $(X,Y)$ is a non-modular pair of hyperplanes of a matroid $N$, then
extending $N$ by two principal extensions, one adding an element $x$
freely to $X$ and the other adding an element $y$ freely to $Y$, where
$x,y\not\in E(N)$, gives a matroid $M$ to which Theorem
\ref{thm:twofilters} applies.  Thus, such matroids are at most a
two-element extension away from a matroid that is not configuration
unique.

The next example, using the lattice path matroid in Figure
\ref{fig:runningLPMex}, is a preview of what we will see in Section
\ref{sec:lpm} for lattice path matroids that are not fundamental
transversal matroids.  The modular pairs that consist of an initial
connected flat and a final connected flat are $([6],[4,9])$ and
$([3],[7,9])$; the non-modular pairs of this type are $([6],[7,9])$
and $([3],[4,9])$.  Consider $3$ and $4$.  We have
$$\mathcal{Z}_3=\{[3],[6],[3]\cup[7,9],[9] \}\qquad
\text{ and } \qquad \mathcal{Z}_4=\{[6],[4,9],[9] \}.$$ Both
$\mathcal{Z}_3-\mathcal{Z}_4$ and $\mathcal{Z}_4-\mathcal{Z}_3$ are
nonempty; also, for each $X\in\mathcal{Z}_3-\mathcal{Z}_4$ and
$Y\in\mathcal{Z}_4-\mathcal{Z}_3$, the pair $(X,Y)$ is not modular, so
the hypotheses of Theorem \ref{thm:twofilters} hold.  The result of
replacing $4$ by $3$ in the sole cyclic flat in
$\mathcal{Z}_4-\mathcal{Z}_3$ is shown in Figure \ref{fig:twooptions}.

\begin{figure}
  \centering
  \begin{tikzpicture}[scale=1.3]
    \node[inner sep = 0.3mm] (em) at (0,0) {\footnotesize
      $\emptyset$};%
    \node[inner sep = 0.3mm] (1a) at (-1,0.75) {\footnotesize
      $\displaystyle\genfrac{}{}{0pt}{}{[3]}{2}$};%
    \node[inner sep = 0.3mm] (1b) at (1,0.75) {\footnotesize
      $\displaystyle\genfrac{}{}{0pt}{}{[7,9]}{2}$};%
    \node[inner sep = 0.3mm] (2a) at (-1.75,1.5) {\footnotesize
      $\displaystyle\genfrac{}{}{0pt}{}{[6]}{4}$};%
    \node[inner sep = 0.3mm] (2b) at (0,1.5) {\footnotesize
      $\displaystyle\genfrac{}{}{0pt}{}{[3]\cup[7,9]}{4}$};%
    \node[inner sep = 0.3mm] (2c) at (1.75,1.5) {\footnotesize
      $\displaystyle\genfrac{}{}{0pt}{}{{\mathbf{\{3,5,6,7,8,9\}}}}
      {4}$};%
    \node[inner sep = 0.3mm] (3) at (0,2.35) {\footnotesize
      $\displaystyle\genfrac{}{}{0pt}{}{[9]}{5}$ };%

    \foreach \from/\to in {em/1a,em/1b,1a/2a,1a/2b,1b/2b,1b/2c,2a/3,
      2b/3,2c/3} \draw(\from)--(\to);%
  \end{tikzpicture}
  \caption{The lattice of cyclic flats obtained when $3$ bumps $4$ in
    the matroid shown in Figure \ref{fig:runningLPMex}.  In this case,
    only one set is altered; it is highlighted with boldface.}
  \label{fig:twooptions}
\end{figure}

We end this section with an immediate corollary of Theorem
\ref{thm:twofilters}, Lemma \ref{lem:modprcompdual}, and the equality
$\mathcal{Z}(M^*)=\{E(M)-A\,:\, A\in \mathcal{Z}(M)\}$: up to
switching $x$ and $y$, bumping commutes with taking the dual.

\begin{cor}\label{cor:dualbump}
  For elements $x$ and $y$, the hypotheses of Theorem
  \ref{thm:twofilters} hold in $M$ if and only if they hold in $M^*$,
  and the matroid obtained from $x$ bumping $y$ in $M$ is the dual of
  the matroid obtained from $y$ bumping $x$ in $M^*$.
\end{cor}

\section{An application to lattice path matroids}\label{sec:lpm}

There are three main results in this section.  Theorem
\ref{thm:lpmonuniqueconfig} shows that bumping, the construction in
Theorem \ref{thm:twofilters}, applies to any connected lattice path
matroid that has a non-modular pair $(A,B)$ that consists of an
initial connected flat $A$ and a final connected flat $B$.  Theorem
\ref{thm:nomixingconfigunique} proves the converse by showing that
connected lattice path matroids in which all such pairs $(A,B)$ are
modular are configuration unique.  Corollary \ref{cor:rookfund} shows
that, for connected lattice path matroids, having all such pairs
$(A,B)$ be modular is equivalent to the matroid being fundamental
transversal.

We first characterize modular pairs of initial and final connected
flats.

\begin{lemma}\label{lem:modularpairsinLPM}
  Let $M$ be a connected lattice path matroid on $[n]$, and let
  $(N_1,N_2,\ldots,N_r)$ be its path presentation.  Let $A=[a]$ be an
  initial connected flat, and $B=[b,n]$ be a final connected flat, of
  $M$.  The pair $(A,B)$ is modular if and only if the number of
  $i\in[r]$ with both $A\cap N_i$ and $B\cap N_i$ nonempty is at most
  $|A\cap B|$.
\end{lemma}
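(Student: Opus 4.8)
The plan is to reduce modularity to a single rank equation and then evaluate that rank from the path presentation. Write $p=r(A)$ and $q=r(B)$, and let $c$ denote the number of indices $i$ for which both $A\cap N_i$ and $B\cap N_i$ are nonempty, i.e.\ the quantity in the statement. Since $A$ and $B$ are proper, nontrivial, connected flats, each is a union of circuits and hence cyclic, and so is $A\cup B$. By Corollary \ref{cor:bmot}, $r(A)=p$ and $r(B)=q$ count the sets $N_i$ meeting $A$ and $B$, respectively; applying the same corollary to the cyclic set $A\cup B$, together with inclusion--exclusion on whether each $N_i$ meets $A$, meets $B$, or meets both, gives $r(A\cup B)=p+q-c$. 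Substituting into the modularity condition $r(A)+r(B)=r(A\cup B)+r(A\cap B)$ collapses it to the single equation $r(A\cap B)=c$.

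Next I would compute $r(A\cap B)$ using that $A=[a]$, $B=[b,n]$, and each $N_i=[l_i,u_i]$ is an interval. When $b>a$ the set $A\cap B$ is empty, so $r(A\cap B)=0$ and the claim is immediate, since $c\leq|A\cap B|=0$ matches $c=0$. When $b\leq a$, the set $A\cap B=[b,a]$ is an interval of size $|A\cap B|=a-b+1$. The crux is the observation that $N_i=[l_i,u_i]$ meets $A=[1,a]$ if and only if $l_i\leq a$, meets $B=[b,n]$ if and only if $u_i\geq b$, and meets $A\cap B=[b,a]$ if and only if $l_i\leq a$ \emph{and} $u_i\geq b$; hence the sets $N_i$ meeting $A\cap B$ are exactly the $c$ sets meeting both $A$ and $B$. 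Lemma \ref{lem:rankinLPM} then yields $r(A\cap B)=\min(c,|A\cap B|)$.

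Finally, $\min(c,|A\cap B|)$ equals $c$ precisely when $c\leq|A\cap B|$, which combined with the reduction $r(A\cap B)=c$ gives the stated equivalence. The only real content is the interval observation in the middle paragraph, namely that because $A$, $B$, and every $N_i$ are intervals, an $N_i$ cannot meet both $A$ and $B$ without also meeting $A\cap B$; everything else is bookkeeping with Corollary \ref{cor:bmot} and Lemma \ref{lem:rankinLPM}. I expect no serious obstacle, only care in handling the degenerate case $A\cap B=\emptyset$ and in confirming that $A$, $B$, and $A\cup B$ are genuinely cyclic so that Corollary \ref{cor:bmot} applies.
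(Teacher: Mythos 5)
Your proof is correct and follows essentially the same route as the paper's: both arguments reduce modularity to the single rank equation $r(A\cap B)=c$ via Corollary \ref{cor:bmot} (the paper does the bookkeeping for $r(A)$, $r(B)$, $r(A\cup B)$ with explicit index intervals in the overlapping case, you via inclusion--exclusion on the cyclic set $A\cup B$), and both finish with the same interval observation and Lemma \ref{lem:rankinLPM} to get $r(A\cap B)=\min(c,|A\cap B|)$. The only difference is cosmetic: your inclusion--exclusion formulation treats the disjoint and overlapping cases uniformly, whereas the paper splits them.
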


\begin{proof}
  The assertion when $A\cap B=\emptyset$ follows from Corollary
  \ref{cor:bmot} applied to $A$, $B$, and $A\cup B$.  Now assume that
  $A\cap B\ne\emptyset$, so $b\leq a$ and $A\cap B = [b,a]$.  Since
  all sets involved are intervals, $A\cap N_i\ne \emptyset$ and
  $B\cap N_i\ne \emptyset$ if and only if
  $[b,a]\cap N_i\ne \emptyset$.  Thus, we must show that $(A,B)$ is
  modular if and only if
  $|\{i\,:\,[b,a]\cap N_i\ne \emptyset\}|\leq |A\cap B|$.  Let
  $\{i\,:\,[b,a]\cap N_i\ne \emptyset\} = [j+1,j+t]$.  Corollary
  \ref{cor:bmot} gives $r(A)=j+t$ and $r(B)=r-j$, so $(A,B)$ is
  modular if and only if $r(A\cap B)=t$, which, by Lemma
  \ref{lem:rankinLPM}, is precisely when $t\leq |A\cap B|$, as needed.
\end{proof}

We next recast Lemma \ref{lem:modularpairsinLPM} in terms of lattice
path diagrams.  Recall that the initial connected flats $[a]$ of
$M[P,Q]$ arise precisely from the $EN$ corners of the upper path $Q$,
where that east step is the $a$th step in $Q$, and the final connected
flats $[b,n]$ arise precisely from the $NE$ corners of $P$, where that
east step is the $b$th step in $P$.  For an initial connected flat
$A=[a]$, let $(a_1,a_2)$ be the coordinates of the integer point at
the corresponding $EN$ corner of $Q$.  Thus, $a_1+a_2=a$.  For a final
connected flat $B=[b,n]$, let $(b_1,b_2)$ be the coordinates of the
integer point at the corresponding $NE$ corner of $P$.  Thus,
$b_1+b_2=b-1$.  We call the pair $(A,B)$ \emph{mixed} if $a_1< b_1$
and $a_2> b_2$.  Since $P$ never goes above $Q$, we cannot have
$a_1> b_1$ and $a_2< b_2$, so the mixed case is the only option for
having the signs of $a_1-b_1$ and $a_2-b_2$ differ.

\begin{cor}\label{cor:modnotmixed}
  Let $M=M[P,Q]$ be a connected lattice path matroid on $[n]$.  Let
  $A$ be an initial connected flat of $M$ and let $B$ be a final
  connected flat of $M$.  The pair $(A,B)$ is modular if and only if
  it is not mixed.
\end{cor}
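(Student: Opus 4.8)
The plan is to translate the combinatorial condition in Lemma \ref{lem:modularpairsinLPM} into the geometry of the lattice path diagram, where the key quantity is the number of sets $N_i$ that meet both $A=[a]$ and $B=[b,n]$. By Lemma \ref{lem:modularpairsinLPM}, the pair $(A,B)$ is modular if and only if $t\leq |A\cap B|$, where $t$ is the number of indices $i$ for which $N_i$ meets the interval $A\cap B$ (in the overlapping case) or, more generally, meets both $A$ and $B$. I would handle the two cases $A\cap B=\emptyset$ and $A\cap B\ne\emptyset$ in parallel with the mixed/non-mixed dichotomy, showing that ``mixed'' is exactly the geometric signature of the inequality $t>|A\cap B|$ failing.

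First I would set up coordinates: the $EN$ corner of $Q$ giving $A=[a]$ sits at $(a_1,a_2)$ with $a_1+a_2=a$, and the $NE$ corner of $P$ giving $B=[b,n]$ sits at $(b_1,b_2)$ with $b_1+b_2=b-1$. The crucial observation is that $a_2$ counts the north steps of $Q$ among the first $a$ steps, hence $a_2 = |\{i\,:\,l_i\leq a\}|$, i.e.\ the number of sets $N_i=[l_i,u_i]$ whose lower endpoint lies in $A$; symmetrically, $r-b_2$ counts the north steps of $P$ strictly before step $b$, so the number of $N_i$ meeting $B$ is $r-b_2$ and the number whose upper endpoint precedes $b$ is governed by $b_2$. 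Translating $a_1<b_1$ and $a_2>b_2$ into statements about which $N_i$ have left endpoint in $A$ and right endpoint in $B$ is the technical heart of the argument.

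Next I would compute $t$, the number of $N_i$ meeting both $A$ and $B$, directly from the diagram. An interval $N_i=[l_i,u_i]$ meets $A=[a]$ exactly when $l_i\leq a$, and meets $B=[b,n]$ exactly when $u_i\geq b$; since the $N_i$ form a consecutive ``staircase'' and $l_i<l_{i+1}$, $u_i<u_{i+1}$, the indices meeting both form the contiguous block $[j+1,j+t]$ identified in the proof of Lemma \ref{lem:modularpairsinLPM}. I would express $t$ as $r(A)+r(B)-r = (j+t)+(r-j)-r$ and then relate both $t$ and $|A\cap B|$ to the corner coordinates, so that the inequality $t\leq|A\cap B|$ becomes a linear inequality in $a_1,a_2,b_1,b_2$. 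The goal is to see that this inequality is equivalent to $a_1\geq b_1$ or $a_2\leq b_2$, i.e.\ to the pair being \emph{not} mixed, using the already-noted fact that $a_1>b_1$ and $a_2<b_2$ cannot occur because $P$ never rises above $Q$.

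The main obstacle will be bookkeeping: correctly converting the three diagram quantities---the horizontal displacement $b_1-a_1$, the vertical displacement $a_2-b_2$, and the overlap $|A\cap B|$---into the single count $t$ and verifying the equivalence in both the disjoint and overlapping cases without sign errors. In the overlapping case $b\leq a$ one has $|A\cap B|=a-b+1=(a_1+a_2)-(b_1+b_2)$, and I expect the identity $t=(a_2-b_2)+(\text{something involving }a_1-b_1)$ to make the comparison $t$ versus $|A\cap B|$ collapse to a comparison of $a_1-b_1$ against $0$; in the disjoint case one must separately check that non-mixedness forces $t\leq 0$ or $t\leq|A\cap B|=0$ to hold trivially. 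Once the counts are pinned down, the equivalence with the mixed condition should follow immediately from the trichotomy on the signs of $a_1-b_1$ and $a_2-b_2$.
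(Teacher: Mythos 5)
Your proposal is correct and takes a genuinely different route from the paper's proof. Both arguments rest on Lemma \ref{lem:modularpairsinLPM}, but the paper proceeds by a fourfold case split (mixed or not, $A\cap B$ empty or not), handling the two disjoint cases directly and disposing of the two overlapping cases by passing to the dual via Lemmas \ref{lem:complements} and \ref{lem:modprcompdual}. You instead compute both sides of the inequality in Lemma \ref{lem:modularpairsinLPM} uniformly in the corner coordinates: since $N_i$ meets $A=[a]$ exactly when $i\leq a_2$ and meets $B=[b,n]$ exactly when $i>b_2$, the number of sets meeting both is $t=\max(0,a_2-b_2)$, while $|A\cap B|=\max\bigl(0,(a_1-b_1)+(a_2-b_2)\bigr)$; comparing these two quantities shows that $t\leq |A\cap B|$ fails precisely when $a_2>b_2$ and $a_1<b_1$, i.e., precisely when the pair is mixed. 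Your route buys a single unified, duality-free computation; the paper's route buys shorter individual steps at the cost of the case split and the duality lemmas. Two small slips in your write-up, neither fatal: it is $b_2$ (not $r-b_2$) that counts the north steps of $P$ strictly before step $b$ --- your subsequent conclusion that $r-b_2$ sets $N_i$ meet $B$ is the correct one --- and the identity you anticipate is simply $t=a_2-b_2$ (when this is positive), with no $a_1-b_1$ term at all; the quantity $a_1-b_1$ enters only through $|A\cap B|=t+(a_1-b_1)$ in the overlapping case, which is exactly why the comparison collapses to the sign of $a_1-b_1$ as you predicted.
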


\begin{proof}
  Let $A$ be $[a]$ where $a$ corresponds to the $EN$ corner of $Q$ at
  $(a_1,a_2)$, and let $B$ be $[b,n]$ where $b$ corresponds to the
  $NE$ corner of $P$ at $(b_1,b_2)$.

  Assume that $(A,B)$ is not mixed.  First assume that $a_1\leq b_1$
  and $a_2\leq b_2$. Thus, $a<b$, so $A\cap B=\emptyset$.  Let $N_i$
  be a set in the path presentation of $M$.  If
  $N_i\cap A\ne \emptyset$, then $i\leq a_2$, while if
  $N_i\cap B\ne\emptyset$, then $i> b_2$.  Since $a_2\leq b_2$, no set
  $N_i$ satisfies both conditions, so $(A,B)$ is a modular pair by
  Lemma \ref{lem:modularpairsinLPM}.  Now assume that $a_1\geq b_1$
  and $a_2\geq b_2$.  In the lattice path diagram for the dual $M^*$,
  the upper bounding path has an $EN$ corner at $(b_2,b_1)$ and the
  lower bounding path has an $NE$ corner at $(a_2,a_1)$. By what we
  just proved, the pair $([n]-B,[n]-A)$, which consists of an initial
  and a final connected flat of $M^*$ by Lemma \ref{lem:complements},
  is modular in $M^*$.  Thus, $(A,B)$ is a modular pair in $M$ by
  Lemma \ref{lem:modprcompdual}.
  
  Now assume that $(A,B)$ is mixed, so $a_1< b_1$ and $a_2> b_2$.
  First assume that $A\cap B=\emptyset$, so $a<b$.  Now
  $b\in N_{b_2+1}$ and $a\in N_{a_2}$, so $\{a,b\}\subseteq N_i$ for
  all sets $N_i$ for which $b_2< i\leq a_2$.  Thus, $(A,B)$
  is not a modular pair.  Now assume that $A\cap B\ne \emptyset$.
  Then $([n]-B,[n]-A)$ is a mixed pair consisting of an initial and a
  final connected flat of $M^*$, and the sets are disjoint.  By what
  we just proved, the pair $([n]-B,[n]-A)$ is not modular in $M^*$, so
  the pair $(A,B)$ is not modular in $M$ by Lemma
  \ref{lem:modprcompdual}.
\end{proof}

The next theorem is the first main result of this section.  The
theorem is stated for connected lattice path matroids, but it extends
to any lattice path matroid by applying the result to the restrictions
to connected components.

\begin{thm}\label{thm:lpmonuniqueconfig}
  If a connected lattice path matroid $M$ on $[n]$ has an initial
  connected flat $A=[a]$ and a final connected flat $B=[b,n]$ for
  which $(A,B)$ is not a modular pair, then some matroid that is not
  isomorphic to $M$ has the same configuration as $M$.
\end{thm}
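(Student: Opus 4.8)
The plan is to reduce Theorem \ref{thm:lpmonuniqueconfig} directly to Theorem \ref{thm:twofilters} by exhibiting two elements $x,y\in[n]$ that satisfy hypotheses (i) and (ii) of that theorem. The hypothesis of the present theorem gives us a non-modular pair $(A,B)$ consisting of an initial connected flat $A=[a]$ and a final connected flat $B=[b,n]$. By Corollary \ref{cor:modnotmixed}, this pair is \emph{mixed}, so in the lattice path diagram we have $a_1<b_1$ and $a_2>b_2$, where $(a_1,a_2)$ is the $EN$ corner of $Q$ giving $A$ and $(b_1,b_2)$ is the $NE$ corner of $P$ giving $B$.

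The natural candidates for $x$ and $y$ come from the proof of Corollary \ref{cor:modnotmixed}: in the disjoint case it was shown that $b\in N_{b_2+1}$ and $a\in N_{a_2}$, with $\{a,b\}$ contained in every $N_i$ for $b_2<i\le a_2$. The elements I would take are $x=a$ and $y=b$ (adjusting by one if $A$ and $B$ overlap; the key feature is that $a$ is the top element of the interval $A$ and $b$ is the bottom element of $B$). I would then verify condition (i): I expect $A\in\mathcal{Z}_x-\mathcal{Z}_y$ since $a\in A=[a]$ but $b\notin[a]$ when $a<b$ (and symmetrically $B\in\mathcal{Z}_y-\mathcal{Z}_x$), so both sets are nonempty. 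For the overlapping case $A\cap B\ne\emptyset$, I would pass to the dual via Lemma \ref{lem:complements} and Lemma \ref{lem:modprcompdual}, exactly as Corollary \ref{cor:modnotmixed} does, reducing to the disjoint situation there.

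The substantive step, condition (ii), requires showing that \emph{every} pair $(X,Y)$ with $X\in\mathcal{Z}_x-\mathcal{Z}_y$ and $Y\in\mathcal{Z}_y-\mathcal{Z}_x$ is non-modular, not merely the single pair $(A,B)$. Here I would use that all cyclic flats of $M$ are intervals (by Lemma \ref{thm:cflats}), so a cyclic flat $X$ containing $x=a$ but not $y=b$ must be an interval that stops before $b$, hence $X\subseteq[a']$ for some endpoint, and likewise $Y$ containing $b$ but not $a$ is an interval $[b',n]$ starting after $a$. The plan is to show any such $X$ and $Y$ again form a mixed configuration — the defining corners of the relevant initial and final connected flats underneath $X$ and $Y$ inherit the sign conditions $a_1<b_1$, $a_2>b_2$ — and then invoke Corollary \ref{cor:modnotmixed} together with Lemma \ref{lem:modularpairsinLPM} to conclude non-modularity. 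The common containment $\{a,b\}\subseteq N_i$ for the overlapping indices $i$ is what forces the rank deficiency.

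\textbf{The main obstacle} I anticipate is precisely this universality in (ii): verifying that the mixedness is not an accident of the particular pair $(A,B)$ but propagates to \emph{all} cyclic flats separating $x$ from $y$. A clean way around this is to observe that the elements $x=a$ and $y=b$ are chosen so that any cyclic flat containing $a$ but not $b$ lies entirely in an initial segment, any cyclic flat containing $b$ but not $a$ lies entirely in a final segment, and the interval structure forces the witnessing north-step sets $N_i$ with $b_2<i\le a_2$ to meet both — giving a uniform rank drop via Corollary \ref{cor:bmot}. Once (i) and (ii) are in hand, Theorem \ref{thm:twofilters} produces a matroid $M'$ with the same configuration as $M$ that is not isomorphic to it, completing the proof.
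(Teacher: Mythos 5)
Your overall strategy---taking $x=a$ and $y=b$, verifying hypotheses (i) and (ii) of Theorem \ref{thm:twofilters}, and dualizing via Lemmas \ref{lem:complements}, \ref{lem:modprcompdual}, and \ref{lem:compdualconfig} when $A\cap B\ne\emptyset$---is the same as the paper's, and your verification of (i) is fine. But your verification of hypothesis (ii) rests on a false structural claim: you assert that ``all cyclic flats of $M$ are intervals (by Lemma \ref{thm:cflats})'' and, later, that ``any cyclic flat containing $a$ but not $b$ lies entirely in an initial segment.'' Lemma \ref{thm:cflats} says this only for \emph{connected} flats; cyclic flats of a lattice path matroid are unions of connected flats and need not be intervals. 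The paper's own running example (Figure \ref{fig:runningLPMex}) refutes both claims: with $a=3$ and $b=4$, the set $[3]\cup[7,9]$ is a cyclic flat containing $3$ but not $4$, it is not an interval, and it lies in no initial segment. So the pairs $(X,Y)$ you must handle in (ii) include pairs with a disconnected member, where $X\cap Y$ can be nonempty even though $A\cap B=\emptyset$; for such pairs neither Lemma \ref{lem:modularpairsinLPM} nor Corollary \ref{cor:modnotmixed} applies (both are stated only for initial/final connected flats), so your plan of ``inheriting the mixed corners'' has no lemma to invoke and the non-modularity of these pairs is left unproved.

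There is a second, related omission: you work with the given pair $(A,B)$, whereas the paper first \emph{re-chooses} the pair among all disjoint non-modular initial/final pairs so that $b-a$ is minimal. That minimality is what forces $A$ to be the unique initial connected flat in $\mathcal{Z}_a-\mathcal{Z}_b$ and $B$ the unique final connected flat in $\mathcal{Z}_b-\mathcal{Z}_a$, whence (via Lemma \ref{thm:cflats}) no flat in either difference contains an element strictly between $a$ and $b$, and every connected component of such a flat lies inside $A$ or inside $B$. Only with that control does the paper's counting argument go through: for the cyclic sets $F_a$, $F_b$, and $F_a\cup F_b$ the rank equals the number of sets $N_j$ meeting them (Corollary \ref{cor:bmot}), while for the possibly non-cyclic set $F_a\cap F_b$ that count is only an upper bound, and the sets $N_i$ containing both $a$ and $b$ are shown (using the component structure) to be disjoint from $F_a\cap F_b$, producing the strict submodular defect. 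Without the minimality assumption and this component analysis, hypothesis (ii) is not established, so your reduction to Theorem \ref{thm:twofilters} is incomplete.
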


\begin{proof}
  Let $(N_1,N_2,\ldots,N_r)$ be the path presentation of $M$.  We
  first consider the case in which $A\cap B = \emptyset$, so $a<b$.
  Among all non-modular pairs of disjoint initial and final connected
  flats, choose $(A,B)$ so that $b-a$ is minimal.  We claim that
  $\mathcal{Z}_a$ and $\mathcal{Z}_b$ satisfy the hypotheses of
  Theorem \ref{thm:twofilters}.  Since
  $A\in \mathcal{Z}_a- \mathcal{Z}_b$ and
  $B\in \mathcal{Z}_b- \mathcal{Z}_a$, neither difference is empty.
  By Lemma \ref{lem:modularpairsinLPM}, some set $N_i$ contains both
  $a$ and $b$.  For any initial connected flat
  $[c]\in \mathcal{Z}_a- \mathcal{Z}_b$, we have $a\leq c<b$, so
  $([c],B)$ is not a modular pair since $[c]\cap B=\emptyset$ and
  $\{c,b\}\subseteq N_i$.  By having chosen $A$ and $B$ with $b-a$
  minimal, it follows that $c=a$, so $A$ is the only initial connected
  flat in $\mathcal{Z}_a- \mathcal{Z}_b$.  Similarly, $B$ is the only
  final connected flat in $\mathcal{Z}_b- \mathcal{Z}_a$.  By those
  conclusions and Lemma \ref{thm:cflats}, no flat in
  $\mathcal{Z}_a- \mathcal{Z}_b$ or $\mathcal{Z}_b- \mathcal{Z}_a$
  contains an element $c$ with $a<c<b$.  Consider
  $F_a\in \mathcal{Z}_a- \mathcal{Z}_b$ and
  $F_b\in \mathcal{Z}_b- \mathcal{Z}_a$.  Each connected component of
  $M|F_a$ or of $M|F_b$ is a subset of either $A$ or $B$. The sets
  $F_a$, $F_b$, and $F_a\cup F_b$ are cyclic, so the rank of each set
  is the number of sets $N_j$ that are not disjoint from it.  The set
  $F_a\cap F_b$ might not be cyclic, so the number of sets $N_j$ that
  are not disjoint from it is only an upper bound on its rank.  If
  $F_a\cap F_b=\emptyset$, then any set $N_i$ that contains $a$ and
  $b$ shows that $r(F_a)+r(F_b)-r(F_a\cup F_b)\geq 1>r(F_a\cap F_b)$,
  so $(F_a,F_b)$ is not a modular pair.  Now assume that
  $F_a\cap F_b\ne\emptyset$.  Fix $c\in F_a\cap F_b$.  Either $c<a$ or
  $c>b$.  First, assume that $c<a$.  Let $X_b$ be the connected
  component of $M|F_b$ that contains $b$, and let $X_c$ be the
  connected component of $M|F_b$ that contains $c$, so
  $X_b\subseteq B$ and $X_c\subseteq A$.  Since
  $X_b\cap X_c=\emptyset$ and $r(X_b\cup X_c)=r(X_b)+r(X_c)$, no set
  $N_j$ in the presentation contains both $c$ and $b$ (otherwise $N_j$
  would be counted twice on the right side and only once on the left
  side).  By symmetry, if $c>b$, then no $N_j$ contains both $c$ and
  $a$.  Thus, the sets $N_i$ that contain both $a$ and $b$ are
  disjoint from $F_a\cap F_b$ and so do not contribute to
  $r(F_a\cap F_b)$; however, they contribute to each of $r(F_a)$,
  $r(F_b)$, and $r(F_a\cup F_b)$.  Therefore
  $r(F_a)+r(F_b)-r(F_a\cup F_b)>r(F_a\cap F_b)$, so $(F_a,F_b)$ is not
  a modular pair.  Thus, $\mathcal{Z}_a$ and $\mathcal{Z}_b$ satisfy
  the hypotheses of Theorem \ref{thm:twofilters}, so the matroid that
  arises from $M$ when $a$ bumps $b$ is not isomorphic to $M$ and has
  the same configuration as $M$.

  Finally, assume that $A\cap B\ne \emptyset$, so $b\leq a$. By Lemma
  \ref{lem:complements}, the set $[b-1]$ is an initial connected flat
  of $M^*$, and $[a+1,n]$ is a final connected flat of $M^*$.  Also,
  the pair $([b-1],[a+1,n])$ is not modular in $M^*$ by Lemma
  \ref{lem:modprcompdual}, and $[b-1]\cap [a+1,n]=\emptyset$.  By the
  case shown above, bumping applies to some elements $a'$ and $b'$ in
  $M^*$, and so it applies to $a'$ and $b'$ in $M$ by Corollary
  \ref{cor:dualbump}, as needed.
\end{proof}

The matroid that is constructed in the proof of Theorem
\ref{thm:lpmonuniqueconfig} is not a lattice path matroid by the next
result.

\begin{thm}\label{thm:reconLPM}
  If two lattice path matroids have the same configuration, then they
  are isomorphic.
\end{thm}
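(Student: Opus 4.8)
The plan is to reduce to the connected case and then to show that the configuration determines the two chains of Lemma~\ref{thm:cflats}, with their sizes and ranks, after which the reconstruction recalled before Lemma~\ref{lem:complements} (see \cite[Theorem~5.6]{lpm2}) pins down the matroid up to isomorphism. By Lemma~\ref{lem:configdirsum}, a matroid is configuration unique if and only if each of its connected components is, and from the configuration one can recognize which cyclic flats $F$ have $M|F$ connected; since lattice path matroids are closed under restriction to connected components, it suffices to treat connected lattice path matroids. If $M$ is nested, its lattice of cyclic flats is a chain, a property visible in the configuration; as nested matroids are Tutte unique and the Tutte polynomial is computable from the configuration, any matroid with the same configuration as a nested matroid is isomorphic to it. So I may assume $M=M[P,Q]$ is connected and not nested.

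Next I would extract from the configuration the subposet $\mathcal{C}$ of proper, nontrivial, connected cyclic flats, where ``proper'' means distinct from the top element, ``nontrivial'' means $\rho<s$, and connectedness is read off via Lemma~\ref{lem:configdirsum}. By Lemma~\ref{thm:cflats}, the elements of $\mathcal{C}$ are exactly the initial connected flats $F_1\subsetneq\cdots\subsetneq F_h$, the final connected flats $G_1\subsetneq\cdots\subsetneq G_k$, and the qualifying intersections $F_i\cap G_j$. If either chain were empty there would be no intersection flats and $\mathcal{C}$ would be a chain, forcing $M$ to be nested; hence both chains are nonempty, so $F_h=[a_h]$ and $G_k=[b_k,n]$ are two distinct maximal elements of $\mathcal{C}$.

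The key step is to recover the two chains from the poset $\mathcal{C}$ together with its size and rank functions. Every intersection flat $F_i\cap G_j=[b_j,a_i]$ satisfies $b_j\geq b_k$ and $a_i\leq a_h$, so $F_i\cap G_j\subseteq F_h$ and $F_i\cap G_j\subseteq G_k$; by contrast, each $F_i=[a_i]$ contains the element $1$, which lies in no final flat since $b_k\geq 2$, so $F_i\not\subseteq G_k$, and symmetrically $G_j\not\subseteq F_h$. Consequently $\mathcal{C}$ has exactly the two maximal elements $F_h$ and $G_k$, and if we label them $P$ and $Q$ in either order, then $\{Z\in\mathcal{C}:Z\leq P,\ Z\not\leq Q\}$ and $\{Z\in\mathcal{C}:Z\leq Q,\ Z\not\leq P\}$ are precisely the two chains of Lemma~\ref{thm:cflats}. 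All of this is determined by the configuration, which thereby supplies the sizes and ranks of the flats in each chain.

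Finally, the reconstruction result recalled before Lemma~\ref{lem:complements} shows that the sizes and ranks of the two chains determine $P$ and $Q$ up to the $180^\circ$ rotation, hence $M$ up to isomorphism. Applying the identical extraction to a second lattice path matroid $N$ with the same configuration yields the same poset with the same size and rank data and the same unordered pair of maximal elements; the sole ambiguity, the choice of which maximal element is $P$, corresponds exactly to the rotation isomorphism $i\mapsto n+1-i$ and is therefore harmless, so $M$ and $N$ are isomorphic. I expect the main obstacle to be the middle step: establishing that $\mathcal{C}$ has exactly two maximal elements and that the intersection flats are precisely the elements lying below both of them, since it is this structural fact that makes the two chains canonically recoverable from the abstract configuration.
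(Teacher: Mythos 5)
Your proof is correct, and it shares the paper's overall skeleton---reduce to the connected case via Lemma \ref{lem:configdirsum}, dispose of nested matroids through their Tutte uniqueness, identify the two chains of Lemma \ref{thm:cflats} inside the configuration, and finish by invoking the reconstruction result of \cite[Theorem 5.6]{lpm2}---but your mechanism for the key step is genuinely different. The paper distinguishes the chain flats from the intersection flats $F_i\cap G_j$ by duality: a proper, nontrivial, connected cyclic flat lies outside the two chains exactly when its complement is not an interval, hence is a \emph{disconnected} cyclic flat of $M^*$, and this is visible in the configuration of $M^*$, which is computable from that of $M$ by Lemma \ref{lem:compdualconfig}. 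You instead argue order-theoretically inside the configuration itself: the subposet $\mathcal{C}$ of proper, nontrivial, connected cyclic flats has exactly two maximal elements $F_h$ and $G_k$ (each $F_i$ contains $1\notin G_k$ and each $G_j$ contains $n\notin F_h$, while every intersection flat lies below both), so the two chains are precisely the elements of $\mathcal{C}$ lying below exactly one maximal element. Both routes are sound; yours is self-contained in that it avoids duality and Lemma \ref{lem:compdualconfig} altogether, and it makes explicit both the recoverability of the chains and the harmlessness of the swap ambiguity (which matches the $180^\circ$ rotation), at the cost of the case analysis verifying the two-maximal-element structure, whereas the paper's duality argument is shorter given the lemmas it already has in hand. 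One small point to tighten: your inference that an empty chain would force $M$ to be nested rests on the fact that every proper nonempty cyclic flat of a connected matroid is a disjoint union of proper, nontrivial, connected flats (so if those flats form a chain, then $\mathcal{Z}(M)$ is a chain); this fact is used elsewhere in the paper (e.g., in the proofs of Theorems \ref{thm:nomixingconfigunique} and \ref{thm:sameconfig}) but deserves an explicit sentence in your write-up.
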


\begin{proof}
  Let $(L, s, \rho,n)$ be the configuration of a lattice path matroid
  $M$ on $[n]$.  It suffices to show how to obtain $M$ up to
  isomorphism from $(L, s, \rho,n)$.  By Lemma \ref{lem:configdirsum},
  we may assume that $M$ is connected.  Nested matroids are Tutte
  unique \cite[Theorem 8.12]{AnnaThesis}, which is a stronger
  conclusion, so we can assume that $M$ is not nested.  By the
  comments two paragraphs after Lemma \ref{thm:cflats}, it suffices to
  identify the elements of $L$ that correspond to the connected flats
  in the two chains identified in that lemma.  We do this by showing
  how to identify all other elements of $L$.  Disconnected cyclic
  flats of $M$ can be detected from $(L, s, \rho,n)$ by Lemma
  \ref{lem:configdirsum}.  Let $F\in \mathcal{Z}(M)$ be connected but
  in neither chain identified in Lemma \ref{thm:cflats}, so $F$ is an
  intersection of an initial and a final connected flat.  Thus,
  $[n]-F$ is not an interval, so it is a disconnected cyclic flat of
  the dual $M^*$, and this can be detected from the configuration of
  $M^*$, which we get from $(L, s, \rho,n)$ by Lemma
  \ref{lem:compdualconfig}.
\end{proof}
  
The next result, which is another main result of this section,
strengthens Theorem \ref{thm:reconLPM} when each pair that consists of
an initial and a final connected flat is modular.  It has not yet been
shown whether these matroids are $\mathcal{G}$ unique.

\begin{thm}\label{thm:nomixingconfigunique}
  Let $M=M[P,Q]$ be a lattice path matroid.  If, for all initial
  connected flats $A$ and final connected flats $B$ of the restriction
  to any connected component of $M$, the pair $(A,B)$ is modular, then
  $M$ is configuration unique.
\end{thm}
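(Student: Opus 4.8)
The plan is to reduce to a single connected component and then to show that the configuration determines the multiset of \emph{element types}, where the type of $e\in E(M)$ is the filter $\mathcal{Z}_e$. By the isomorphism criterion following Theorem~\ref{thm:axioms}, a bijection $E(M)\to E(N)$ that matches types is an isomorphism, so it suffices to prove that the configuration determines the intersection number $|\bigcap\mathcal{F}|$ for every family $\mathcal{F}\subseteq\mathcal{Z}(M)$; Möbius inversion over the lattice of cyclic flats then recovers the number of elements of each type, and matching them gives $N\cong M$. First I would use Lemma~\ref{lem:configdirsum} to reduce to connected $M$, and the Tutte uniqueness of nested matroids \cite[Theorem 8.12]{AnnaThesis} to assume that $M$ is not nested; by Lemma~\ref{thm:cflats} every connected cyclic flat is then an interval, and every cyclic flat is a disjoint union of such intervals coming from the two chains of initial and final connected flats.

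The engine is a two-sided squeeze valid for any matroid $N$ with the given configuration. For cyclic flats $X,Y$, submodularity of $r_N$ together with the identity $r_N(X\cap Y)=r_N(X\meet Y)+|X\cap Y|-|X\meet Y|$ yields the configuration-determined upper bound
$$|X\cap Y|_N\le \rho(X)+\rho(Y)-\rho(X\join Y)-\rho(X\meet Y)+s(X\meet Y),$$
with equality in $M$ precisely when $(X,Y)$ is a modular pair; dually, $X\cup Y\subseteq X\join Y$ gives the lower bound $|X\cap Y|_N\ge s(X)+s(Y)-s(X\join Y)$, with equality in $M$ precisely when $X\cup Y$ is a flat. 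Hence $|X\cap Y|_N=|X\cap Y|_M$ whenever, in $M$, the pair is modular and in addition either $X\cap Y$ is cyclic (so the meet furnishes the matching lower bound) or $X\cup Y$ is closed (so the join does). For an initial connected flat $A=[a]$ and a final connected flat $B=[b,n]$, Corollary~\ref{cor:modnotmixed} supplies modularity, and I would check both endpoints of the squeeze: if $A\cap B=\emptyset$ the upper bound already forces $|A\cap B|_N\le 0$, while if $A\cap B\ne\emptyset$ then $A\cup B=[n]$ is a flat and the join side pins the value. Crucially, the intersection flat $A\cap B=[b,a]$ need not be cyclic---indeed it is independent exactly when the two corners share their first coordinate---so in that case the lower bound must come from the join rather than the meet.

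The principal obstacle is to extend this pinning from initial--final pairs to all pairs, and then to all families, so that every intersection number agrees. Here the staircase description underlying Corollary~\ref{cor:modnotmixed} is decisive: the hypothesis says exactly that the $EN$ corners of $Q$ and the $NE$ corners of $P$ are totally ordered coordinatewise, and I would use this to show that every pair of cyclic flats of $M$ is modular and that each such pair has either a cyclic intersection or a closed union, so the squeeze applies throughout; the awkward cases are those involving an independent intersection flat and those involving disconnected cyclic flats, the latter handled component-by-component via Corollary~\ref{cor:bmot} and, where convenient, passage to the dual through Lemmas~\ref{lem:modprcompdual} and~\ref{lem:compdualconfig}. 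Once all pairwise numbers agree, the interval structure forces agreement of the higher intersection numbers as well---whenever a pairwise intersection is cyclic it equals the lattice meet, reducing triple and larger intersections inductively to pairwise ones, and the remaining independent intersections are single sub-intervals whose overlaps are controlled directly---so Möbius inversion gives equal type multisets for $M$ and $N$, whence $N$ is isomorphic to $M$.
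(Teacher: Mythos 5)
Your squeeze engine is sound, and for initial--final pairs it is exactly the mechanism the paper itself uses (the disjoint case is your meet-side pinning; the overlapping case, which the paper handles by duality, is your join-side pinning). The genuine gap is that your plan commits you to pinning $|\bigcap\mathcal{F}|$ for \emph{every} antichain $\mathcal{F}\subseteq\mathcal{Z}(M)$, and the three claims this requires are asserted rather than proved, and none of them is routine. Claim (a), that every pair of cyclic flats of $M$ is modular, is essentially the statement that $M$ is a fundamental transversal matroid; in the paper that is Corollary \ref{cor:rookfund}, whose proof of (3)$\Rightarrow$(1)--(2) invokes the very theorem you are proving, so you would have to establish it independently by a full staircase case analysis over all cyclic flats, including the disconnected ones --- real work, not a remark. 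Claim (b), that every pair has cyclic intersection or closed union, cannot follow from modularity or fundamentality alone: in the rook matroid of Figure \ref{fig:rook} (a fundamental transversal matroid), the cyclic flats $X=\{1,2,3,4,6,7\}$ and $Y=\{1,2,4,5,6,9\}$ form a modular pair, yet $X\cap Y=\{1,2,4,6\}$ has the coloop $4$ in its restriction (not cyclic) and $X\cup Y$ omits $8$ but spans (not closed); there your squeeze yields only $3\le|X'\cap Y'|\le 4$, and indeed both values are realized, since this matroid and the non-isomorphic lattice path matroid of Figure \ref{fig:runningLPMex} share a configuration but have corresponding intersection sizes $4$ and $3$. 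So (b) carries all the weight, it is special to non-mixed diagrams, and your proposal never proves it.

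Claim (c) is the most serious. Your induction works when some pairwise intersection is cyclic (it then equals the meet, which $\Phi$ tracks), but when the pairwise intersections are independent you say their overlaps are ``controlled directly,'' and in $N$ there is nothing to control them with: $N$ has no interval structure, and knowing $|X'\cap Y'|=|X\cap Y|$ does not locate $X'\cap Y'$ inside $N$; in particular $X\cap Y\subseteq Z$ in $M$ does not give $X'\cap Y'\subseteq Z'$ in $N$, which is exactly what the reduction of triples to pairs needs. (Such triples can be rescued by showing that in $M$ the triple union is closed and coincides with a pairwise union, then running inclusion--exclusion in $N$; but that argument and its case analysis are absent.) The paper sidesteps (a)--(c) entirely: by Lemma \ref{thm:cflats}, every connected component of every cyclic flat is an $A_i$, a $B_j$, or an $A_i\cap B_j$, so the cells $(A_i-A_{i-1})\cap(B_j-B_{j-1})$ partition $E(M)$ and every cyclic flat is a union of cells; hence pinning only the initial--final numbers $|A_i\cap B_j|$ already produces a cell-respecting bijection, which is automatically an isomorphism. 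That observation --- the two chains generate everything, so only initial--final pairs ever need to be pinned --- is the idea your proposal is missing, and without it the last paragraph of your argument is a program, not a proof.
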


\begin{proof}
  By Lemma \ref{lem:configdirsum}, we may assume that $M$ is
  connected.  Assume that $M$ and $N$ have the same configuration.
  Thus, there is a lattice isomorphism
  $\Phi:\mathcal{Z}(M)\to\mathcal{Z}(N)$ that preserves the size and
  rank of each cyclic flat.  We will shorten $\Phi(A)$ to $A'$.  To
  show that $M$ and $N$ are isomorphic, it suffices to show that
  $\Phi$ is induced by a bijection $\phi:E(M)\to E(N)$.  We first show
  that $|A\cap B|=|A'\cap B'|$ and $|A\cup B|=|A'\cup B'|$ for any
  initial connected flat $A$ and final connected flat $B$ of
  $M$. Showing one of those equalities suffices since each equality
  implies the other by inclusion/exclusion.

  Let $E(M)=[n]$, let $A$ be $[a]$ where $a$ corresponds to the $EN$
  corner of $Q$ at $(a_1,a_2)$, and let $B$ be $[b,n]$ where $b$
  corresponds to the $NE$ corner of $P$ at $(b_1,b_2)$.  Thus,
  $a_1+a_2=a$ and $b_1+b_2=b-1$.  The assumption that $(A,B)$ is
  modular means that $(A,B)$ is not mixed, which we break into two
  cases: (i) $a_1<b_1$ and $a_2\leq b_2$, and (ii) $b_1\leq a_1$ and
  $b_2<a_2$.
  
  Case (i) gives $a<b$, so $A\cap B=\emptyset$.  Now
  $r_M(A)+r_M(B)=r_M(A\join B)$ since $(A,B)$ is modular.  By applying
  $\Phi$ we get $r_N(A')+r_N(B')=r_N(A'\join B')$, which gives
  $A'\cap B'=\emptyset$ since otherwise the submodular inequality
  would fail for $A'$ and $B'$.  Thus, $|A\cap B|=0=|A'\cap B'|$.

  Assume that case (ii) applies to $(A,B)$.  Define
  $\Phi^*:\mathcal{Z}(M^*)\to \mathcal{Z}(N^*)$ by, for all $F$ in
  $\mathcal{Z}(M)$, setting $\Phi^*([n]-F)=E(N)-F'$.  The discussion
  of duality after Theorem \ref{thm:axioms} shows that $\Phi^*$ is a
  lattice isomorphism; also, it preserves size and rank.  By Lemma
  \ref{lem:complements}, the pair $([n]-B,[n]-A)$ consists of an
  initial and a final connected flat of $M^*$; also, the corresponding
  corners are at $(b_2,b_1)$ and $(a_2,a_1)$ in the lattice path
  diagram for $M^*$.  Case (i) applies to $([n]-B,[n]-A)$ in $M^*$, so
  by what we just proved,
  $$([n]-A)\cap ([n]-B)=\emptyset=(E(N)-A')\cap (E(N)- B').$$
  Thus, $A\cup B=[n]$ and $A'\cup B'=E(N)$, so
  $|A\cup B|=n=|A'\cup B'|$, as claimed.

  Let $A_1\subsetneq A_2\subsetneq \cdots\subsetneq A_{s-1}$ be the
  initial connected flats of $M$.  Set $A_0=\emptyset$ and $A_s=[n]$.
  Let $B_1\subsetneq B_2\subsetneq \cdots\subsetneq B_{t-1}$ be the
  final connected flats of $M$.  Set $B_0=\emptyset$ and $B_t=[n]$.
  Thus, $A'_0=B'_0= \emptyset$ and $A'_s=B'_s= E(N)$.  For each
  $e\in [n]$, we have $e\in(A_i-A_{i-1})\cap (B_j-B_{j-1})$ for
  exactly one pair $(i,j)\in[s]\times[t]$.  What we showed in the
  previous two paragraphs also gives
  $$|(A_i-A_{i-1})\cap (B_j-B_{j-1})|=|(A'_i-A'_{i-1})\cap
  (B'_j-B'_{j-1})|$$ for all $(i,j)\in[s]\times[t]$.
  Thus, there is a bijection   $\phi:E(M)\to E(N)$ for which 
  $$\phi\bigl((A_i-A_{i-1})\cap (B_j-B_{j-1})\bigr)=(A'_i-A'_{i-1})\cap
  (B'_j-B'_{j-1})$$ for all $(i,j)\in[s]\times[t]$.  It follows that
  $\phi(A_i)=A'_i$, $\phi(B_j)=B'_j$, and
  $\phi(A_i\cap B_j)=A'_i\cap B'_j$ for each $i\in[s-1]$ and
  $j\in[t-1]$.  By Lemma \ref{thm:cflats}, any connected component of
  any cyclic flat of $M$ and $N$ is among these sets.  Thus, $\phi$ is
  an isomorphism from $M$ onto $N$.
\end{proof}

In order to show that the condition on a lattice path matroid $M$ in
Theorem \ref{thm:lpmonuniqueconfig} holds if and only if $M$ is not a
fundamental transversal matroid, we will use rook matroids, which
Alexandersson and Jal introduced in \cite{rook}.  For consistency with
the convention for lattice path matroids, our description of rook
matroids differs superficially from \cite{rook}: we switch the roles
of rows and columns, and our labeling differs.  Given lattice paths
$P$ and $Q$ from $(0,0)$ to $(m,r)$ with $P$ never rising above $Q$,
label the rows of the diagram, from bottom to top, by $1$ through $r$,
and the columns, from left to right, by $r+1$ to $r+m$.  (Figure
\ref{fig:rook} gives an example.)  For each $i\in [r]$, let the set
$A_i$ consist of $i$ along with the labels of all columns that have a
square in row $i$.  The \emph{rook matroid} $R[P,Q]$ is the
transversal matroid with the presentation $(A_1,A_2,\ldots,A_r)$.  By
construction, $R[P,Q]$ is a fundamental transversal matroid; the
element $i\in[r]$ is in $A_i$ and in no other $A_j$.  In contrast,
lattice path matroids need not be fundamental.  In \cite{rook},
Alexandersson and Jal show that the bases of the rook matroid
correspond to non-attacking, non-nesting placements of rooks on the
board given by the lattice path diagram.  (With our labeling, rook
placements amount to bijections $\phi:C\to R$ where
$C\subseteq [r+1,r+m]$, $R\subseteq [r]$, and $c\in A_{\phi(c)}$ for
all $c\in C$; rooks are placed in the squares $(\phi(c),c)$, for
$c\in C$, on the board.  The non-nesting condition means that for
$c,c'\in C$, if $c<c'$, then $\phi(c)<\phi(c')$.)

\begin{figure}
  \centering
  \begin{tikzpicture}[scale=0.6]
    \draw (0,0) grid (2,2); %
    \draw (1,1) grid (3,4); %
    \draw (2,3) grid (4,5);%

    \draw (-0.35,0.5) node {\footnotesize$1$};%
    \draw (-0.35,1.5) node {\footnotesize$2$};%
    \draw (-0.35,2.5) node {\footnotesize$3$};%
    \draw (-0.35,3.5) node {\footnotesize$4$};%
    \draw (-0.35,4.5) node {\footnotesize$5$};%

    \draw (0.5,5.35) node {\footnotesize$6$};%
    \draw (1.5,5.35) node {\footnotesize$7$};%
    \draw (2.5,5.35) node {\footnotesize$8$};%
    \draw (3.5,5.35) node {\footnotesize$9$};%    
  \end{tikzpicture}
  \hspace{1cm}
  \begin{tikzpicture}[scale=1.3]
    \node[inner sep = 0.3mm] (em) at (0,0) {\footnotesize
      $\emptyset$};%
    \node[inner sep = 0.3mm] (1a) at (-1,0.75) {\footnotesize
      $\displaystyle\genfrac{}{}{0pt}{}{\{1,2,6\}}{2}$};%
    \node[inner sep = 0.3mm] (1b) at (1,0.75) {\footnotesize
      $\displaystyle\genfrac{}{}{0pt}{}{\{4,5,9\}}{2}$};%
    \node[inner sep = 0.3mm] (2a) at (-1.7,1.5) {\footnotesize
      $\displaystyle\genfrac{}{}{0pt}{}{\{1,2,3,4,6,7\} }{4}$};%
    \node[inner sep = 0.3mm] (2b) at (0,1.5) {\footnotesize
      $\displaystyle\genfrac{}{}{0pt}{}{\{1,2,4,5,6,9\}}{4}$};%
    \node[inner sep = 0.3mm] (2c) at (1.7,1.5) {\footnotesize
      $\displaystyle\genfrac{}{}{0pt}{}{\{2,3,4,5,8,9\}}{4}$};%
    \node[inner sep = 0.3mm] (3) at (0,2.35) {\footnotesize
      $\displaystyle\genfrac{}{}{0pt}{}{[9]}{5}$ };%

    \foreach \from/\to in {em/1a,em/1b,1a/2a,1a/2b,1b/2b,1b/2c,2a/3,
      2b/3,2c/3} \draw(\from)--(\to);%
\end{tikzpicture}  
\caption{The rook matroid for which the sets in the presentation are
  $\{1,6,7\}$, $\{2,6,7,8\}$, $\{3,7,8\}$, $\{4,7,8,9\}$, and
  $\{5,8,9\}$.  The lattice of cyclic flats is shown on the right.
  Note the correspondence with the cyclic flats of the matroid in
  Figure \ref{fig:runningLPMex}.}
  \label{fig:rook}
\end{figure}
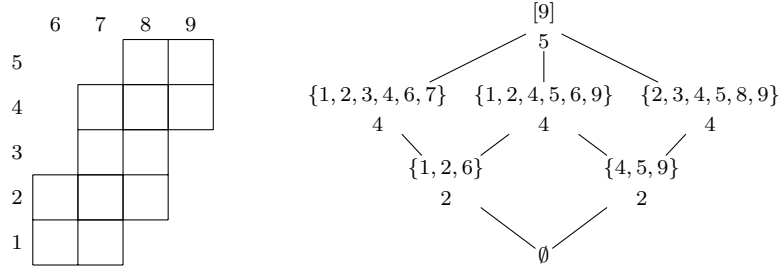

The interpretation of direct sums via diagrams is the same for rook
matroids as for lattice path matroids, so, like lattice path matroids,
the class of rook matroids is closed under direct sums; also, a rook
matroid is connected if and only if the bounding paths never meet
except at the first and last points.  (In contrast, as one would expect
for a class of fundamental transversal matroids, the class of rook
matroids is not closed under minors.)  Even if the rook matroid
$R[P,Q]$ and the lattice path matroid $M[P,Q]$ are not isomorphic,
they have the same configuration, as we show in Theorem
\ref{thm:sameconfig}.

We first treat a lemma that applies to both lattice path and rook
matroids.  Given a presentation $\mathcal{Y}=(Y_1,Y_2,\ldots,Y_r)$ of
a transversal matroid $N$ and an element $e\in E(N)$, the
\emph{support of} $e$, denoted $s_{\mathcal{Y}}(e)$, is
$\{i\in [r]\,:\,e\in Y_i\}$.  The \emph{support $s_{\mathcal{Y}}(X)$
  of} $X\subseteq E(M)$ is the union of all sets $s_{\mathcal{Y}}(e)$
with $e\in X$.  By Hall's theorem, $I\subseteq E(N)$ is independent in
$N$ if and only if $|X|\leq|s_{\mathcal{Y}}(X)|$ for all subsets $X$
of $I$, so $C\subseteq E(N)$ is a circuit of $N$ if and only if
$|s_{\mathcal{Y}}(C)|<|C|$ but $|X|\leq|s_{\mathcal{Y}}(X)|$ for all
proper subsets $X$ of $C$.

\begin{lemma}\label{lem:suppint}
  Let $\mathcal{Y}=(Y_1,Y_2,\ldots,Y_r)$ be a presentation of a
  transversal matroid $N$.  If, for each $e\in E(N)$, the support
  $s_{\mathcal{Y}}(e)$ is an interval in $[r]$, then
  \begin{enumerate}
  \item for any circuit $C$ of $N$, its support $s_{\mathcal{Y}}(C)$
    is an interval in $[r]$,
  \item the support $s_{\mathcal{Y}}(F)$ of any connected flat $F$ of
    $N$ with $|F|>1$ is an interval, say $I$, and
    $F=\{e\,:\, s_{\mathcal{Y}}(e)\subseteq I\}$.
  \end{enumerate}
\end{lemma}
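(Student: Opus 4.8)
The plan is to handle both parts with one idea: a union of intervals in $[r]$ is itself an interval unless it has a \emph{gap}, that is, an index $q$ lying strictly between two indices of the union but not belonging to the union. In each part I would assume a gap exists and derive a contradiction, exploiting that the supports $s_{\mathcal{Y}}(e)$ are intervals.

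For part (1), I would suppose $s_{\mathcal{Y}}(C)$ has a gap at some $q$, so $p<q<p'$ with $p,p'\in s_{\mathcal{Y}}(C)$ and $q\notin s_{\mathcal{Y}}(C)$. Since each $s_{\mathcal{Y}}(e)$ is an interval omitting $q$, every $e\in C$ has its support entirely in $[1,q-1]$ or entirely in $[q+1,r]$, which splits $C$ into two nonempty parts $C_1,C_2$ (nonempty because of $p$ and $p'$) with disjoint supports. Both $C_1,C_2$ are proper subsets of the circuit $C$, hence independent, so the Hall bound $|C_i|\le|s_{\mathcal{Y}}(C_i)|$ holds for each; summing and using the disjointness of the two supports gives $|C|\le|s_{\mathcal{Y}}(C)|$, contradicting $|s_{\mathcal{Y}}(C)|<|C|$. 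This step is short and purely combinatorial.

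For the first assertion of part (2), I would first observe that $M|F$ is connected and that its presentation $(Y_1\cap F,\ldots,Y_r\cap F)$ leaves each element's support unchanged, so part (1) applies to every circuit $C\subseteq F$. Assuming $s_{\mathcal{Y}}(F)$ has a gap at some $q$, the same splitting partitions $F$ into nonempty sets $F_1,F_2$ with supports in $[1,q-1]$ and $[q+1,r]$. Each circuit inside $F$ has interval support contained in $s_{\mathcal{Y}}(F)$ and hence omitting $q$, so no such circuit can meet both $F_1$ and $F_2$; thus every circuit of $M|F$ lies in $F_1$ or in $F_2$, making $M|F=(M|F_1)\oplus(M|F_2)$ and contradicting connectivity. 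Hence $s_{\mathcal{Y}}(F)=I$ is an interval.

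For the description $F=\{e:s_{\mathcal{Y}}(e)\subseteq I\}$, the inclusion $\subseteq$ is immediate since $s_{\mathcal{Y}}(e)\subseteq s_{\mathcal{Y}}(F)=I$ for $e\in F$. For $\supseteq$, I would use that $F$ is a cyclic flat (a connected restriction on more than one point has no coloops), so Corollary \ref{cor:bmot} gives $r(F)=|s_{\mathcal{Y}}(F)|=|I|$; then for $e$ with $s_{\mathcal{Y}}(e)\subseteq I$ we have $s_{\mathcal{Y}}(F\cup e)=I$, and the general bound $r(X)\le|s_{\mathcal{Y}}(X)|$ (a partial transversal injects into its support) forces $r(F\cup e)=r(F)$, whence $e\in\cl(F)=F$. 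The step I expect to be the main obstacle is the conversion of connectivity into the non-existence of a gap-crossing circuit: it relies both on applying part (1) to the restricted presentation and on the fact that a matroid all of whose circuits avoid crossing a fixed partition is the direct sum across that partition.
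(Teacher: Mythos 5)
Your proof is correct and takes essentially the same approach as the paper's: the same splitting of a circuit across a gap in its support against the circuit/Hall inequalities for part (1), and the same use of Corollary \ref{cor:bmot} together with the bound $r(X)\le|s_{\mathcal{Y}}(X)|$ to get $F=\{e\,:\,s_{\mathcal{Y}}(e)\subseteq I\}$ in part (2). The only cosmetic difference is that the paper shows $s_{\mathcal{Y}}(F)$ is an interval directly (any two elements of a connected flat lie in a common circuit, so the interval supports of these circuits overlap and union to an interval), whereas you reach the same conclusion by contradiction via a direct-sum decomposition across the gap.
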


\begin{proof}
  Let $C$ be a circuit of $N$.  As noted above,
  $|C|> |s_{\mathcal{Y}}(C)|$, while $|X|\leq |s_{\mathcal{Y}}(X)|$
  for all $X\subsetneq C$.  Assume, contrary to part (1), that
  $s_{\mathcal{Y}}(C)$ is not an interval, so there is a partition
  $\{I,J\}$ of $s_{\mathcal{Y}}(C)$ where $I$ is a maximal interval in
  $s_{\mathcal{Y}}(C) $ and $J=s_{\mathcal{Y}}(C)-I$.  Let
  $C_I=\{e\in C\,:\, s_{\mathcal{Y}}(e)\subseteq I\}$ and
  $C_J=\{e\in C\,:\, s_{\mathcal{Y}}(e)\subseteq J\}$.  Since each set
  $s_{\mathcal{Y}}(e)$ is an interval, $\{C_I,C_J\}$ is a partition of
  $C$.  The inequality $|s_{\mathcal{Y}}(C)|<|C|$ implies that either
  $|s_{\mathcal{Y}}(C_I)|<|C_I|$ or $|s_{\mathcal{Y}}(C_J)|<|C_J|$,
  which contradicts having $|X|\leq |s_{\mathcal{Y}}(X)|$ for all
  $X\subsetneq C$, and so proves assertion (1).

  Any two elements in a connected flat $F$ are in a circuit of $N|F$,
  so it follows from part (1) that $s_{\mathcal{Y}}(F)$ is an interval
  $I$ in $[r]$.  Corollary \ref{cor:bmot} gives $r(F)=|I|$, so if
  $s_{\mathcal{Y}}(e)\subseteq I$, then $r(F)=r(F\cup e)$.  Thus,
  $\{e\,:\, s_{\mathcal{Y}}(e)\subseteq I\} \subseteq F$.  Clearly
  $F\subseteq\{e\,:\, s_{\mathcal{Y}}(e)\subseteq I\}$, so equality
  holds.
\end{proof}

For example, consider the lattice path matroid $M$ in Figure
\ref{fig:runningLPMex} and the rook matroid $R$ in Figure
\ref{fig:rook}.  In $M$, the connected cyclic flats are (denoting the
support of $e$ by $s_{\mathcal{N}}(e)$)
\begin{align*}
  &
    \begin{aligned}[t]
      \emptyset=&\,\{e\,:\,s_{\mathcal{N}}(e)\subseteq\emptyset\},\\
      [3]=&\,\{e\,:\,s_{\mathcal{N}}(e)\subseteq[2]\},\\
      [6]=&\,\{e\,:\,s_{\mathcal{N}}(e)\subseteq[4]\},\\
    \end{aligned}
  &
    \begin{aligned}[t]
      [7,9]=&\,\{e\,:\,s_{\mathcal{N}}(e)\subseteq[4,5]\},\\
      [4,9]=&\,\{e\,:\,s_{\mathcal{N}}(e)\subseteq[2,5]\},\\
      [9]=&\,\{e\,:\,s_{\mathcal{N}}(e)\subseteq[5]\}.\\
    \end{aligned} 
\end{align*}  
In $R$, the connected cyclic flats are (denoting the support of $e$ by
$s_{\mathcal{A}}(e)$)
\begin{align*}
  &
    \begin{aligned}[t]
      \emptyset=&\,\{e\,:\,s_{\mathcal{A}}(e)\subseteq\emptyset\},\\
      \{1,2,6\}=&\,\{e\,:\,s_{\mathcal{A}}(e)\subseteq[2]\},\\
      \{1,2,3,4,6,7\}=&\,\{e\,:\,s_{\mathcal{A}}(e)\subseteq[4]\},\\        
    \end{aligned}
  &
    \begin{aligned}[t]
      \{4,5,9\}=&\,\{e\,:\,s_{\mathcal{A}}(e)\subseteq[4,5]\},\\
      \{2,3,4,5,8,9\}=&\,\{e\,:\,s_{\mathcal{A}}(e)\subseteq[2,5]\},\\
      [9]=&\,\{e\,:\,s_{\mathcal{A}}(e)\subseteq[5]\}.\\
    \end{aligned} 
\end{align*}  
We see that the connected flats in $M$ and $R$ consisting of the
elements with support in some interval $I$ of $[5]$ have the same size
and rank, and this extends to all cyclic flats.  This illustrates the
next result, that the lattice path and rook matroids coming from the
same lattice path diagram have the same configuration.  This result
strengthens \cite[Theorem 3.38]{rook}, which shows that $M[P,Q]$ and
$R[P,Q]$ have the same Tutte polynomial.  It also proves
\cite[Conjecture 3.39]{rook}: any valuative invariant is the same on
$M[P,Q]$ and $R[P,Q]$.

\begin{thm}\label{thm:sameconfig}
  Fix lattice paths $P$ and $Q$ from $(0,0)$ to $(n-r,r)$ with $P$
  never rising above $Q$.  The lattice path matroid $M=M[P,Q]$ and the
  rook matroid $R=R[P,Q]$ have the same configuration.
\end{thm}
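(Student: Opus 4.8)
The plan is to exploit that both $M$ and $R$ have presentations, indexed by the same set $[r]$, in which every element's support is an interval, so that Lemma \ref{lem:suppint} applies to each. Writing $s$ for the support function of the presentation at hand, for $M$ one has $s(k)=\{i:l_i\le k\le u_i\}$, which is the intersection of an initial and a final segment of $[r]$ and hence an interval; for $R$ each row element $i$ has $s(i)=\{i\}$, while each column element has support equal to the set of rows meeting that column, again an interval. I would also recall from Brylawski's simplex representation that a cyclic set $X$ spans the face on the vertices $\{v_i:i\in s(X)\}$, so that $r(X)=|s(X)|$ (Corollary \ref{cor:bmot}) and $\cl(X)=\{e:s(e)\subseteq s(X)\}$.

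First I would reduce the theorem to a statement about supports. From the closure formula, every cyclic flat $Z$ satisfies $Z=E_{s(Z)}$, where $E_S:=\{e:s(e)\subseteq S\}$; its rank is $|s(Z)|$, and $Z\mapsto s(Z)$ is an order isomorphism from $\mathcal{Z}(N)$ onto the family of \emph{valid supports} $\mathcal{S}(N)=\{s(Z):Z\in\mathcal{Z}(N)\}\subseteq 2^{[r]}$, ordered by inclusion with rank given by cardinality. Thus the configuration of $N$ is encoded by the pair $(\mathcal{S}(N),\,S\mapsto|E_S|)$, and the theorem reduces to showing (a) $\mathcal{S}(M)=\mathcal{S}(R)$ and (b) $|E^M_S|=|E^R_S|$ for every valid $S$; then $Z\mapsto E^R_{s_M(Z)}$ is the required configuration isomorphism.

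Next I would decompose over runs. (Loops of $M$, the positions in no $N_i$, correspond to the empty columns of $R$ and together form the bottom cyclic flat; setting these aside I may assume both matroids loopless, so each support is a nonempty interval.) Then for any $S$ the set $E_S$ is the direct sum $\bigsqcup_J E_J$ over the maximal runs $J$ of $S$, so $S$ is valid iff each maximal run $J$ is a \emph{valid run} (meaning $E_J$ is cyclic with $s(E_J)=J$), and $|E_S|=\sum_J|E_J|$. Hence it suffices to compare, over intervals $J=[p,q]$, their validity and their sizes. A direct computation gives, with the conventions $u_0=0$ and $l_{r+1}=n+1$, that $E^M_{[p,q]}=[u_{p-1}+1,\,l_{q+1}-1]$, so $|E^M_{[p,q]}|=l_{q+1}-u_{p-1}-1$; while $|E^R_{[p,q]}|=(q-p+1)+c(p,q)$, where $c(p,q)$ counts the columns whose cells all lie in rows $p$ through $q$, and expressing the heights of the east steps of $P$ and $Q$ through $l$ and $u$ yields $c(p,q)=\max\bigl(0,\,l_{q+1}-u_{p-1}-q+p-2\bigr)$. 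Consequently the two sizes coincide, both equal to $l_{q+1}-u_{p-1}-1$, exactly when this exceeds $q-p+1$, that is, when the nullity is positive. Since a valid run is cyclic and so has positive nullity, (b) follows once (a) is known, and the whole theorem comes down to matching validity of runs.

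The main obstacle is precisely this validity matching: proving that $[p,q]$ is a valid run for $M$ if and only if it is one for $R$, despite the two presentations having different support multisets. I would recast both conditions in terms of the sub-board on rows $p$ through $q$. For $R$ the support is automatically full and $E^R_{[p,q]}$ is cyclic iff every row in $[p,q]$ is met by some column contained in $[p,q]$; for $M$ one must instead show that the interval $[u_{p-1}+1,\,l_{q+1}-1]$ carries a cyclic flat of full support, which via Lemma \ref{lem:rankinLPM} and the description of connected flats in Lemma \ref{thm:cflats} can be read off from the same sub-board. I expect the technical heart to be showing that both conditions are equivalent to the single requirement that the columns lying entirely within rows $p$ through $q$ cover every such row, so that the validity of a run is governed by the common lattice-path diagram. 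Granting this, (a) and hence (b) hold, and $M$ and $R$ have the same configuration.
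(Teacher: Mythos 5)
Your reductions are correct and run parallel to the paper's own setup: both arguments rest on Lemma \ref{lem:suppint}, both identify cyclic flats with sets of the form $E_S=\{e:s(e)\subseteq S\}$, and both reduce the comparison to intervals (your ``runs'') of $[r]$. Your derivation of (b) from (a) via the explicit size formulas $|E^M_{[p,q]}|=l_{q+1}-u_{p-1}-1$ and $|E^R_{[p,q]}|=(q-p+1)+\max\bigl(0,\,l_{q+1}-u_{p-1}-q+p-2\bigr)$ is sound (and is a tidy algebraic alternative to the paper's ``rows plus columns of the restricted diagram'' count), and your treatment of disconnected cyclic flats by decomposing a support into maximal runs matches the paper's final paragraph. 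The problem is that you never prove (a). You write ``I expect the technical heart to be showing that both conditions are equivalent to\ldots'' and then ``Granting this, (a) and hence (b) hold.'' That granted statement \emph{is} the theorem's core; it is exactly the claim the paper spends the bulk of its proof on, established there by the case analysis on the corners $(a_1,a_2)$ and $(b_1,b_2)$ of the diagram (the three cases $b_1<a_1$, $b_1=a_1$, $a_1<b_1$ of Figure \ref{fig:corners}). As submitted, your proposal is an outline whose central claim is conjectured rather than proved.

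Moreover, the missing equivalence is not routine bookkeeping, because validity can fail for reasons other than the covering condition you name, and the failure modes on the two sides must be shown to coincide. For example, in the matroid of Figure \ref{fig:runningLPMex} (where $u=(3,5,6,8,9)$ and $l=(1,2,4,5,7)$), the run $[2,3]$ gives $E^M_{[2,3]}=[u_1+1,l_4-1]=\{4\}$: this set has full support $\{2,3\}$ yet is independent, hence not cyclic; on the rook side $E^R_{[2,3]}$ consists of two rows and no columns and is likewise not cyclic. Showing that such degeneracies occur for $M$ precisely when they occur for $R$ requires relating cyclicity and full support of the interval $[u_{p-1}+1,\,l_{q+1}-1]$ to the presence and relative position of the $NE$ corner of $P$ below row $p$ and the $EN$ corner of $Q$ above row $q$ --- in other words, the geometric argument of the paper's proof. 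Until you supply that argument (in your language: prove that validity of a run for $M$, validity for $R$, and your covering criterion are all equivalent), the proof is incomplete.
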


\begin{proof}
  By the observations about direct sums above, it suffices to prove
  this theorem when $M$ and $R$ are connected, that is, $P$ and $Q$
  intersect only at $(0,0)$ and $(n-r,r)$, so we make that assumption.
  Let $\mathcal{N}$ be the path presentation $(N_1,N_2,\ldots,N_r)$ of
  $M$.  Let $\mathcal{A}$ be the presentation $(A_1,A_2,\ldots,A_r)$
  that we used to define $R$.  For an interval $I$ in $[r]$, let
  $$S^M_I=\{e\in[n]\,:\,s_{\mathcal{N}}(e)\subseteq I\} \quad \text{
    and } \quad S^R_I=\{e\in[n]\,:\,s_{\mathcal{A}}(e)\subseteq I\}.$$
  It is easy to see that $S^M_I$ is a flat of $M$, as is $S^R_I$ for
  $R$.  Also, $S^M_\emptyset=S^R_\emptyset=\emptyset$, which is a
  cyclic flat of both $M$ and $R$.  By Lemma \ref{lem:suppint}, each
  connected flat of $M$ is $S^M_I$ for some interval $I$ in $[r]$, and
  likewise for $R$.  We have $I\subseteq S^R_I$, so
  $S^R_I\ne \emptyset$ when $I\ne \emptyset$. The key to the proof is
  establishing the following claim: for any nonempty interval $I$ in
  $[r]$, the set $S^M_I$ is a connected flat of $M$ with
  $|S^M_I|\geq 2$ if and only if $S^R_I$ is a connected flat of $R$
  with $|S^R_I|\geq 2$, and in that case, $|S^M_I|=|S^R_I|$ and
  $r_M(S^M_I)=r_R(S^R_I)$.

  We label each north step in the lattice path diagram with the
  position it has in each path that contains it, as illustrated in
  Figure \ref{fig:runningLPMex}.  Consider a nonempty interval
  $I=[s,t]$ in $[r]$.  First assume that $S^M_I=\emptyset$.  We claim
  that the flat $S^R_I$ of $R$ is not cyclic.  Having
  $S^M_I=\emptyset$ implies that any label on a north step in a row
  between rows $s$ and $t$ also labels a north step in either row
  $s-1$ or row $t+1$.  From that, it follows that each column extends
  either below row $s$ or above row $t$, and so $S^R_I=I$, which, as
  needed, is either a singleton or disconnected.  Now assume that
  $S^M_I\ne\emptyset$.  Since the sets in $\mathcal{N}$ are intervals
  $[l_i,u_i]$ with $l_1<l_2<\cdots<l_r$ and $u_1<u_2<\cdots<u_r$, it
  follows that $S^M_I$ is an interval, say $[a,b]$, in $[n]$.  In the
  lattice path diagram, row $s$ is the first row in which some north
  step has label $a$.  Note that $s=1$ if and only if $a=1$ by our
  assumption about $P$ and $Q$.  Consider the case with $s>1$, and so
  $a>1$.  Row $s-1$ has a north step labeled $a-1$ (since
  $a-1\not\in S^M_I$), and none labeled $a$, so the north step labeled
  $a-1$ in row $s-1$ is in $P$.  If the north step labeled $a$ in row
  $s$ is also in $P$, then $S^M_I\cap N_s=\{a\}$, so $a$ is a coloop
  of $M|S^M_I$, and so either $|S^M_I|=1$ or $M|S^M_I$ is
  disconnected.  In that case, $s$ is the only element of $S^R_I$ that
  has $s$ in its support, so $R|S^R_I$ has $s$ as a coloop, and so
  either $|S^R_I|=1$ or $R|S^R_I$ is disconnected.  Thus, we may focus
  on the case in which the north step labeled $a$ in row $s$ is just
  above a north-east corner of $P$.  By symmetry, when $t\ne r$
  (equivalently, $b\ne n$), the matroids $M|S^M_I$ and $R|S^R_I$ are
  disconnected or have singleton ground sets unless the north step
  labeled $b$ in row $t$ has an east-north corner of $Q$ right above
  it.

  Let $(a_1,a_2)$ be the lowest point on the north step labeled $a$ in
  row $s$, which is $(0,0)$ if $a=s=1$, and otherwise is a north-east
  corner of $P$.  Let $(b_1,b_2)$ be the highest point on the north
  step labeled $b$ in row $t$, which is $(n-r,r)$ if $b=n$ and $t=r$,
  and otherwise is an east-north corner of $Q$.  If $b_1\leq a_1$,
  then $M|S^M_I$ is the free matroid on $[a,b]$ and $R|S^R_I$ is the
  free matroid on $I$; thus, each is connected if and only if
  $|I|=1$. (See Figure \ref{fig:corners}.)  Now assume that
  $a_1<b_1$. To get $M|S^M_I$ and $R|S^R_I$, restrict the diagram to
  the region between these corners and take the resulting lattice path
  or rook matroid.  Both $M|S^M_I$ and $R|S^R_I$ are connected since
  the bounding paths have no common internal points, and they have the
  same rank, namely, $|I|$, and the same number of elements (the
  number of rows plus the number of columns in the restricted
  diagram).  This completes the proof of the claim.

  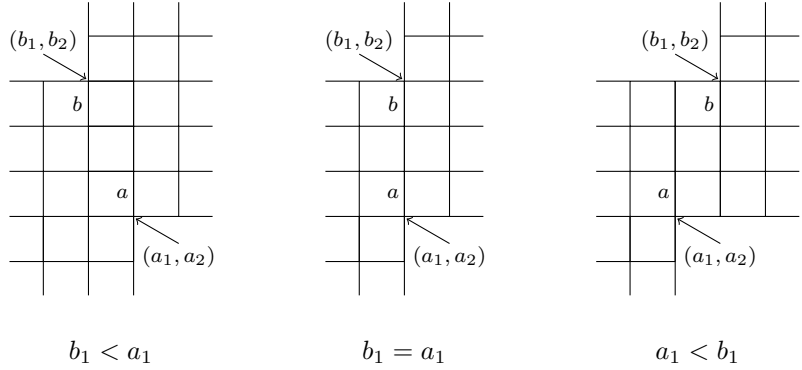
\begin{figure}
    \centering
    \begin{tikzpicture}[scale=0.6]      
      \draw (-7,0) grid (-5,4); %
      \draw (-6,1) grid (-4,5); %

      \draw (-7.75,0) -- (-7,0); %
      \draw (-7.75,1) -- (-7,1); %
      \draw (-7.75,2) -- (-7,2); %
      \draw (-7.75,3) -- (-7,3); %
      \draw (-7.75,4) -- (-7,4); %
      \draw (-3.25,1) -- (-4,1); %
      \draw (-3.25,2) -- (-4,2); %
      \draw (-3.25,3) -- (-4,3); %
      \draw (-3.25,4) -- (-4,4); %
      \draw (-3.25,5) -- (-4,5); %
      \draw (-7,-0.75) -- (-7,1); %
      \draw (-5,-0.75) -- (-5,1); %
      \draw (-6,-0.75) -- (-6,1); %
      \draw (-4,5.75) -- (-4,5); %
      \draw (-5,5.75) -- (-5,5); %
      \draw (-6,5.75) -- (-6,5); %

      \draw (-7,4.9) node {\footnotesize$(b_1,b_2)$};%
      \draw (-4,0.1) node {\footnotesize$(a_1,a_2)$};%

      \draw[->] (-7,4.6) -- (-6.05,4.05); %
      \draw[->] (-4,0.4) -- (-4.95,0.95); %
    
      \draw (-6.25,3.5) node {\footnotesize$b$};%
      \draw (-5.25,1.5) node {\footnotesize$a$};%

      \node at (-5.5,-2) {$b_1<a_1$};%

      %%%%%%%%%%
      
      \draw (0,0) grid (1,4); %
      \draw (1,1) grid (2,5); %
      \draw (-0.75,0) -- (0,0); %
      \draw (-0.75,1) -- (0,1); %
      \draw (-0.75,2) -- (0,2); %
      \draw (-0.75,3) -- (0,3); %
      \draw (-0.75,4) -- (0,4); %
      \draw (2.75,1) -- (2,1); %
      \draw (2.75,2) -- (2,2); %
      \draw (2.75,3) -- (2,3); %
      \draw (2.75,4) -- (2,4); %
      \draw (2.75,5) -- (2,5); %
      \draw (0,-0.75) -- (0,1); %
      \draw (1,-0.75) -- (1,1); %
      \draw (1,5.75) -- (1,5); %
      \draw (2,5.75) -- (2,5); %

      \draw (0,4.9) node {\footnotesize$(b_1,b_2)$};%
      \draw (2,0.1) node {\footnotesize$(a_1,a_2)$};%

      \draw[->] (0,4.6) -- (0.95,4.05); %
      \draw[->] (2,0.4) -- (1.05,0.95); %
    
      \draw (0.75,3.5) node {\footnotesize$b$};%
      \draw (0.75,1.5) node {\footnotesize$a$};%

      \node at (1,-2) {$b_1=a_1$};%

%%%%%%%%%%%%%%

      \draw (6,0) grid (7,4); %
      \draw (7,1) grid (8,4); %
      \draw (8,1) grid (9,5); %

      \draw (5.25,0) -- (6,0); %
      \draw (5.25,1) -- (6,1); %
      \draw (5.25,2) -- (6,2); %
      \draw (5.25,3) -- (6,3); %
      \draw (5.25,4) -- (6,4); %

      \draw (9.75,5) -- (9,5); %
      \draw (9.75,4) -- (9,4); %
      \draw (9.75,3) -- (9,3); %
      \draw (9.75,2) -- (9,2); %
      \draw (9.75,1) -- (9,1); %
      \draw (6,-0.75) -- (6,1); %
      \draw (7,-0.75) -- (7,1); %
      \draw (9,5.75) -- (9,5); %
      \draw (8,5.75) -- (8,5); %
      
      \draw (7.75,3.5) node {\footnotesize$b$};%
      \draw (6.75,1.5) node {\footnotesize$a$};%

      \draw (7,4.9) node {\footnotesize$(b_1,b_2)$};%
      \draw (8,0.1) node {\footnotesize$(a_1,a_2)$};%

      \draw[->] (7,4.6) -- (7.95,4.05); %
      \draw[->] (8,0.4) -- (7.05,0.95); %
      
      \node at (7.5,-2) {$a_1<b_1$};%
    \end{tikzpicture}
    \caption{The options for the corners at $(a_1,a_2)$ and
      $(b_1,b_2)$ in the proof of Theorem \ref{thm:sameconfig}.}
    \label{fig:corners}
  \end{figure}
  
  The connected components of the restriction to a cyclic flat of $M$
  are connected flats of $M$ with at least two elements, and likewise
  for $R$.  Thus, the cyclic flats of $M$ have the form
  $S^M_{I_1}\cup S^M_{I_2}\cup \cdots\cup S^M_{I_k}$ where
  $I_1,I_2,\ldots,I_k$ are pairwise disjoint intervals in $[r]$, and
  likewise for $R$.  If $I_1\cup I_2\cup\cdots\cup I_k$ is an interval
  in $[r]$, then $S^M_{I_1}\cup S^M_{I_2}\cup \cdots\cup S^M_{I_k}$
  could span a connected flat of $M$, but in that case, by what we
  proved above, $S^R_{I_1}\cup S^R_{I_2}\cup \cdots\cup S^R_{I_k}$
  would also span a connected flat of $R$ of the same size and rank.
  When $S^M_{I_1}\cup S^M_{I_2}\cup \cdots\cup S^M_{I_k}$ is a flat of
  $M$, its rank is the sum of the ranks of the components, and
  likewise for the size, and likewise for the counterpart in $R$.
  Thus, $M$ and $R$ have the same configuration.
\end{proof}

By the corollary below, the hypothesis of Theorem
\ref{thm:lpmonuniqueconfig} is equivalent to the lattice path matroid
not being a fundamental transversal matroid.  The proof will use the
observation that all pairs of cyclic flats in a fundamental
transversal matroid $M$ are modular.  To see that, let
$\mathcal{A}=(A_1,A_2,\ldots,A_r)$ be a presentation of $M$ and let
$b_1,b_2,\ldots,b_r$ be elements for which
$s_{\mathcal{A}}(b_i)=\{i\}$ for each $i\in[r]$.  Then
$B=\{b_1,b_2,\ldots,b_r\}$ is a basis of $M$ and for any
$X,Y\in\mathcal{Z}(M)$, we have $r(X)=|X\cap B|$, and likewise for
$Y$, $X\cup Y$, and $X\cap Y$.  (That result extends to arbitrary
collections of cyclic flats in fundamental transversal matroids; see
the proof of \cite[Theorem 3.2]{chartrans}.)

\begin{cor}\label{cor:rookfund}
  For a lattice path matroid $M$, the following statements are
  equivalent:
  \begin{itemize}
  \item[(1)] $M$ is a fundamental transversal matroid,
  \item[(2)] each pair of cyclic flats of $M$ is modular, and
  \item[(3)] no restriction of $M$ to any of its connected components
    has mixed pairs.
  \end{itemize}
\end{cor}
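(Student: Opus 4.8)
The plan is to establish the cycle of implications $(1)\Rightarrow(2)\Rightarrow(3)\Rightarrow(1)$. Since each of the three properties is invariant under isomorphism, I would first fix a representation $M=M[P,Q]$, so that the rook matroid $R[P,Q]$ is defined and, by Theorem~\ref{thm:sameconfig}, has the same configuration as $M$; recall that it is a fundamental transversal matroid by construction.

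For $(1)\Rightarrow(2)$ I would invoke the observation recorded just before the corollary: a fundamental transversal matroid has a basis $B$ of private elements for which $r(X)=|X\cap B|$ for every cyclic flat $X$, with the same formula holding for $X\cup Y$ and $X\cap Y$, so that $r(X)+r(Y)=r(X\cup Y)+r(X\cap Y)$ by inclusion/exclusion and every pair of cyclic flats is modular. For $(2)\Rightarrow(3)$, note that each initial connected flat and each final connected flat of a connected component $M|X$ is connected with at least two elements, hence has no coloops and so is a cyclic flat of $M$; and since such a pair $(A,B)$ lies entirely within $X$, the ranks of $A$, $B$, $A\cup B$, and $A\cap B$ are the same in $M$ as in $M|X$, so the pair is modular in one exactly when it is modular in the other. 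Thus $(2)$ makes every pair consisting of an initial and a final connected flat of every component modular, and Corollary~\ref{cor:modnotmixed} converts this into the statement that no component has a mixed pair, which is $(3)$.

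The substantive implication is $(3)\Rightarrow(1)$, where I would combine the two main results of this section. By Corollary~\ref{cor:modnotmixed}, the absence of mixed pairs in every component is exactly the assertion that every pair consisting of an initial and a final connected flat of every component of $M$ is modular, which is precisely the hypothesis of Theorem~\ref{thm:nomixingconfigunique}; hence $M$ is configuration unique. Since $M$ and $R[P,Q]$ have the same configuration, configuration uniqueness forces $M\cong R[P,Q]$, and therefore $M$ is a fundamental transversal matroid. I expect this implication to be the crux: not because any one step is long, but because it is where the two deep theorems of the section must be assembled, and the care lies in matching condition $(3)$ to the hypothesis of Theorem~\ref{thm:nomixingconfigunique} exactly and in transporting the fundamental-transversal property across the isomorphism with the rook matroid that configuration uniqueness supplies.
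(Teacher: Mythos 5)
Your proposal is correct and follows essentially the same route as the paper: the same cycle $(1)\Rightarrow(2)\Rightarrow(3)\Rightarrow(1)$, with the pre-corollary observation giving $(1)\Rightarrow(2)$, Corollary~\ref{cor:modnotmixed} giving $(2)\Rightarrow(3)$ (which the paper dismisses as ``clearly''), and the key step $(3)\Rightarrow(1)$ obtained exactly as in the paper by combining Theorem~\ref{thm:sameconfig} with the configuration uniqueness from Theorem~\ref{thm:nomixingconfigunique} to force $M\cong R[P,Q]$.
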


\begin{proof}
  We justified that (1) implies (2) above, and (2) clearly implies
  (3).  To prove that (3) implies (1), note that if $M$ has no mixed
  pairs, then the rook matroid $N$ defined using the same diagram is
  fundamental and has the same configuration as $M$.  By Theorem
  \ref{thm:nomixingconfigunique}, the matroids $M$ and $N$ are
  isomorphic, so $M$ is fundamental.
\end{proof}

While pairs of cyclic flats in fundamental transversal matroids are
modular, that also holds in some transversal matroids that are not
fundamental.  One example is the prism, which is the transversal
matroid on $[6]$ that has the presentation
$([6], \{1,2\}, \{3,4\},\{5,6\})$.

Alexandersson and Jal \cite[Theorem 3.23]{rook} showed that if the
lattice path matroid $M[P,Q]$ has no mixed pairs, then it is
isomorphic to the rook matroid $R[P,Q]$.  The proof that (3) implies
(1) in Corollary \ref{cor:rookfund} shows that this follows from
Theorems \ref{thm:nomixingconfigunique} and \ref{thm:sameconfig}.

\section{Enumeration of non-mixed diagrams}
\label{sec:counting}

In this section we show that exactly $(3^{n-2}+1)/2$ connected lattice
path matroids on $[n]$ have no mixed pairs.  We also refine this count
by rank and note connections with other enumeration problems.  A
corollary of this work, along with enumerative results in \cite{lpm}
and the results in the previous section, is that, asymptotically,
almost no lattice path matroids are configuration unique.

A \emph{diagram} is the shape formed by the unit squares that lie
between two lattice paths $P$ and $Q$ that have the same endpoints,
and with $P$ strictly below $Q$ except at the endpoints.  Thus,
diagrams correspond to connected lattice path matroids on $[n]$ with
the usual order.  A diagram is \emph{non-mixed} if, in the
corresponding lattice path matroid, no pair of initial and final
connected flats is mixed. The \emph{size} of a diagram is the length
of either bounding path minus one, which is the size of the
corresponding lattice path matroid minus one.  We use the following
recursive description of non-mixed diagrams from~\cite[Proposition
3.14]{rook}.

\begin{thm}\label{thm:314rook}
  Non-mixed diagrams are the ones that can be built from a single
  square by any sequence of the following operations: (R) duplicate
  the topmost row; (C) duplicate the rightmost column; (S) add a
  square to the right of the topmost row; (T) add a square above the
  rightmost column. Moreover, the size of the diagram is the number of
  operations performed plus one.
\end{thm}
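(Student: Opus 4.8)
The plan is to treat both directions through a single structural fact about how the four operations act on the corners that record mixedness. Recall that the initial connected flats of $M[P,Q]$ correspond to the $EN$ corners of $Q$, located at points $(a_1,a_2)$, and the final connected flats to the $NE$ corners of $P$, at points $(b_1,b_2)$; by Corollary~\ref{cor:modnotmixed}, the diagram is non-mixed precisely when no $EN$ corner of $Q$ lies strictly to the upper left of any $NE$ corner of $P$, that is, there is no such pair with $a_1<b_1$ and $a_2>b_2$. Two elementary observations are worth recording first: because the diagram is connected, the last step of $Q$ is east and the last step of $P$ is north, so every $EN$ corner of $Q$ has height $a_2\le r-1$ and every $NE$ corner of $P$ has horizontal coordinate $b_1\le m-1$.

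The central lemma I would establish is that each operation leaves every existing corner in place (with its type) and introduces at most one new one, placed extremally: operations (R) and (C) introduce no new corner (each merely lengthens a vertical run of $Q$ at the top, respectively a horizontal run of $P$ at the right); operation (S) introduces at most one new $NE$ corner, necessarily at height $r-1$; and operation (T) introduces at most one new $EN$ corner, necessarily at horizontal coordinate $m-1$. By the transpose symmetry that reflects the diagram across the line $y=x$ --- which passes to the dual, interchanges (R) with (C) and (S) with (T), swaps $EN$ corners of $Q$ with $NE$ corners of $P$, and preserves mixedness --- it suffices to check this for (R) and (S). Granting the lemma, soundness follows at once: the single square has no corners, hence is non-mixed, and applying an operation to a non-mixed diagram creates no mixed pair, since the possibly new $NE$ corner from (S) sits at the maximal height $r-1$ (so no $EN$ corner lies strictly above it) and the possibly new $EN$ corner from (T) sits at the maximal coordinate $m-1$ (so no $NE$ corner lies strictly to its right). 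The same lemma supplies the preservation needed in the converse: since the operations never delete or move a corner, whenever $D$ is obtained from $D'$ by one operation the corner set of $D'$ is contained in that of $D$, so $D$ non-mixed forces $D'$ non-mixed.

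For the converse I would induct on size, proving that every non-mixed diagram $D$ of size at least $2$ is obtained from a smaller non-mixed diagram by a single operation; by the previous paragraph that smaller diagram is again non-mixed, the unique diagram of size $1$ is the single square, and since each operation adds one step to each bounding path and hence raises the size by exactly one, the count ``size $=$ number of operations $+\,1$'' falls out. The reduction is read off at the top-right unit square $\sigma$, in the top row and the rightmost column. If $\sigma$ is the only square of its column but not of its row, I undo (S) by deleting it; if it is the only square of its row but not of its column, I undo (T); it cannot be the only square of both, as that would leave it attached to the rest of $D$ only at a corner, contradicting connectedness for size at least $2$. In the remaining case $\sigma$ is alone in neither, so the top row has width at least $2$ and the rightmost column has height at least $2$; here deleting the single square $\sigma$ would break the weak monotonicity of the boundaries, so one must instead undo (R) (delete the top row when it duplicates the row beneath) or (C) (delete the rightmost column when it duplicates the column to its left).

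The crux --- and the only point at which non-mixedness is used --- is that in this last case at least one of (R), (C) can be undone. I would argue by contradiction. If (R) cannot be undone, then, the top row not sticking out on the right, its left boundary must jump between the top two rows, producing an $EN$ corner of $Q$ at height $r-1$ whose horizontal coordinate is at most $m-2$ (because the top row has width at least $2$). Symmetrically, if (C) cannot be undone then the rightmost column produces an $NE$ corner of $P$ at horizontal coordinate $m-1$ whose height is at most $r-2$; the degenerate possibility of a full-height rightmost column does not arise here, since monotonicity of the right boundary would then force the neighbouring column also to have full height, making (C) applicable. These two corners satisfy $a_1\le m-2<m-1=b_1$ and $a_2=r-1>r-2\ge b_2$, so they form a mixed pair, contradicting non-mixedness. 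Hence a reverse operation always exists, closing the induction. I expect this top-right analysis --- isolating the exact stuck configuration and recognising it as precisely a mixed pair --- to be the main obstacle, while the routine verifications that each reduced object is still a genuine connected diagram (its two paths meeting only at the endpoints) I would dispatch within the same case split.
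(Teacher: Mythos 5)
The paper offers no proof of this statement to compare against: Theorem \ref{thm:314rook} is imported from Alexandersson and Jal \cite[Proposition 3.14]{rook} and used in Section \ref{sec:counting} as a black box, so your proposal can only be judged on its own merits --- and on those it is correct. Your central lemma is the right invariant, and it does hold at the level of the bounding paths: (R) inserts a north step into the final north run of $Q$ and appends a north step to $P$; (S) appends an east step to $Q$ and replaces the final north step of $P$ by an east step followed by a north step, so the only possible new north--east pair of $P$ is at $(m,r-1)$; (C) and (T) are the transposes. Combined with your observation that every $EN$ corner of $Q$ has height at most $r-1$ and every $NE$ corner of $P$ has abscissa at most $m-1$ (a consequence of connectedness), this yields both soundness and the fact that undoing an operation preserves non-mixedness. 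The completeness induction is also sound, and its crux is exactly where you locate it: when the top row has width at least $2$ and the rightmost column has height at least $2$, the second condition forces the top row and the row beneath it to share their right end (the square below $\sigma$ lies in the diagram, so $P$ reaches height $r-1$ only at $x=m$), and dually for the two rightmost columns; hence ``(R) not undoable'' produces an $EN$ corner of $Q$ at some $(q,r-1)$ with $q\le m-2$, ``(C) not undoable'' produces an $NE$ corner of $P$ at $(m-1,p)$ with $p\le r-2$, and these two corners form a mixed pair, as you claim. Your exclusion of the degenerate full-height rightmost column is also correct, as is the impossibility of case 3 (both the top row and the rightmost column reduced to $\sigma$ would force $P$ and $Q$ to meet at the interior point $(m-1,r-1)$). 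The verifications you defer --- that each reduced region is again a diagram whose paths meet only at their endpoints --- are indeed routine; I checked them and they go through.
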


Thus, each non-mixed diagram of size $m$ can be encoded by at least
one word of length $m-1$ in the alphabet $\{C, R, S, T\}$ ($C$ stands
for \emph{column}, $R$ for \emph{row}, $S$ for \emph{side}, and $T$ for
\emph{top}). Let $D(w)$ denote the non-mixed diagram obtained from the
word $w$.  Several words can yield the same diagram.  For example,
$SRC$, $CRC$, $TCC$, $RCC$, $SSR$, $SCR$, $CSR$, and $CCR$ all give
the same diagram.  The following lemma refines
Theorem~\ref{thm:314rook} by identifying the instances of $S$ and $T$
that can be replaced by other letters and the ones that cannot.
Recall that by a corner at position $(a_1,a_2)$ we mean an $EN$ corner
in the upper path or a $NE$ corner in the lower path, where
$(a_1,a_2)$ are the coordinates of the point where the steps of the
corner meet.

\begin{lemma}\label{lem:corners}
  Let $w$ be any word in the alphabet $\{C, R, S, T\}$. If $D(w)$ has
  a corner at $(a_1,a_2)$, then (a) $w_{a_1+a_2}=S$ if the corner is
  in the lower path, and (b) $w_{a_1+a_2}=T$ if the corner is in the
  upper path. Moreover, if all other appearances of $S$ and $T$ in $w$
  are replaced by $C$ and $R$, respectively, then the resulting word
  gives the same diagram $D(w)$.
\end{lemma}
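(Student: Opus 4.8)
The plan is to process the word $w=w_1w_2\cdots$ from left to right, building the diagram one operation at a time and tracking how the corners of the intermediate diagrams are born and how their coordinates change. Write $D_0$ for the single square and $D_j=D(w_1\cdots w_j)$, and let $(m_j,r_j)$ be the top-right corner of $D_j$. Since each of the four operations appends one step to one bounding path and inserts one step into the other (so each path grows by exactly one step), we have $m_j+r_j=j+2$, and in particular $m_{j-1}+r_{j-1}=j+1$ just before the $j$-th operation is applied. Before doing anything else, I would invoke the reflection of the diagram in the line $y=x$: it exchanges the lower and upper paths, exchanges rows with columns, and hence exchanges the operations $R\leftrightarrow C$ and $S\leftrightarrow T$, as well as the two kinds of corner ($NE$ corners of the lower path $\leftrightarrow$ $EN$ corners of the upper path). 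Consequently it suffices to prove statement (a) together with the matching half of the ``moreover'' clause (replacing the inessential $S$'s by $C$); statement (b) and the other half then follow by symmetry.

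The geometric heart of the argument is that each operation adds its new unit squares either strictly to the right of the current diagram (operations $C$ and $S$ add squares in the column-range $[m_{j-1},m_{j-1}+1]$) or strictly above it (operations $R$ and $T$ add squares in the row-range $[r_{j-1},r_{j-1}+1]$). I would cash this out as a \emph{locked-in} principle: once an $NE$ corner of the lower path appears at a point $(a_1,a_2)$, every subsequent operation adds its squares with $x$-coordinate $>a_1$ or $y$-coordinate $>a_2$, and so leaves that point a corner and leaves its coordinates unchanged. The same bookkeeping controls which operations \emph{create} an $NE$ corner. A short case check should give three facts: (i) $C$, $R$, and $T$ never create an $NE$ corner of the lower path; (ii) applying $S$ as the $j$-th operation creates exactly one new $NE$ corner, located at $(m_{j-1},r_{j-1}-1)$, and it does so precisely when the current rightmost column has height at least $2$; and (iii) when that column has height $1$, the operation $S$ produces literally the same diagram as $C$ would.

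Granting these facts, statement (a) is immediate. By induction and the locked-in principle, the $NE$ corners of $D(w)$ are exactly the corners produced by those $S$'s that act on a rightmost column of height at least $2$; each such corner sits at the point $(m_{j-1},r_{j-1}-1)$ created at its step $j$, and that point has diagonal coordinate $m_{j-1}+(r_{j-1}-1)=(j+1)-1=j$. Hence an $NE$ corner of $D(w)$ at $(a_1,a_2)$ satisfies $a_1+a_2=j$, where $j$ is its creation step, so $w_{a_1+a_2}=w_j=S$. For the ``moreover'' clause I would use fact (iii): the $S$'s not accounted for by a final corner are exactly those applied to a rightmost column of height $1$, and for such a step $S$ and $C$ yield identical diagrams. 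Replacing each such $S$ by $C$ therefore leaves every intermediate diagram $D_j$ unchanged, hence leaves the behaviour of all later operations unchanged, hence leaves $D(w)$ unchanged; the surviving (corner-creating) $S$'s are precisely those at the positions identified in (a), and symmetrically for the $T$'s and $C\to R$.

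I expect the main obstacle to be the case check of the second paragraph, namely giving a clean and rigorous description of how each operation rewrites the two bounding paths near the top-right and verifying (i)--(iii). The cleanest route I see is to describe each operation as the insertion of a single step into each path: $C$ inserts an $E$ immediately before the final block of $N$'s of the lower path, whereas $S$ inserts that $E$ just below the top of the final block (so the two agree exactly when the final block has length one, which is fact (iii), and $S$ produces an $N$-then-$E$ turn, i.e.\ an $NE$ corner, exactly when the block is longer, which is fact (ii)). One then reads off the local $NE$ and $EN$ patterns directly, and the reflection symmetry removes any need to repeat the analysis for $R$, $T$, and the upper path.
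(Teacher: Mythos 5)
Your proof is correct and follows essentially the same route as the paper's: both rest on the facts that only $S$ (resp.\ $T$) can create a corner in the lower (resp.\ upper) path, that corners persist with fixed coordinates, and that a non-corner-creating $S$ (resp.\ $T$) is applied when the rightmost column has height one (resp.\ the topmost row has width one) and therefore acts exactly as $C$ (resp.\ $R$). The paper's proof is simply a terser version of this, leaving implicit the coordinate bookkeeping (a corner created at step $j$ satisfies $a_1+a_2=j$) and the persistence of corners, both of which you spell out explicitly.
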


\begin{proof}
  The first assertion holds since $C$ and $R$ never create corners,
  and $S$ does not create an upper corner and $T$ does not create a
  lower corner.  For the second part, suppose that $w_i=S$ but that
  this $S$ does not create a corner. Let $\overline{w}$ be the word
  $w_1\ldots w_{i-1}$. Note that the rightmost column of the diagram
  $D(\overline{w})$ must have height one, so
  $D(\overline{w}S)=D(\overline{w}C)$.  Similarly, any $T$ that does
  not create a corner can be replaced by $R$.
\end{proof}

It is straightforward to check that replacing any instance of $RC$ by
$CR$ yields the same diagram. This observation and
Lemma~\ref{lem:corners} allow us to associate a unique word to each
non-mixed diagram.  For instance, of the eight words that give the
same diagram in the example above, only $CCR$ satisfies the conditions
in the following result.

\begin{cor}\label{cor:uniqueword}
  Each non-mixed diagram arises from exactly one word in the alphabet
  $\{C, R, S, T\}$ in which $S$ occurs precisely to create a corner in
  the lower path, $T$ occurs precisely to create a corner in the upper
  path, and $RC$ does not occur as a subword (i.e., a word that occurs
  as consecutive letters in the word). In particular, this word does
  not have $SS$ or $TT$ as subwords.
\end{cor}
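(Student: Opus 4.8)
The plan is to prove existence and uniqueness separately, and then to read off the ``in particular'' clause. For existence, I would start from any word $w$ with $D(w)$ equal to the given non-mixed diagram, which exists by Theorem \ref{thm:314rook}. By Lemma \ref{lem:corners}, every occurrence of $S$ that does not create a lower corner may be replaced by $C$, and every occurrence of $T$ that does not create an upper corner may be replaced by $R$, without changing the diagram; after these replacements $S$ occurs exactly at the positions $a_1+a_2$ of the $NE$ corners of $P$ and $T$ exactly at the positions of the $EN$ corners of $Q$. Next I would repeatedly replace an adjacent subword $RC$ by $CR$; each such move fixes the diagram by the observation preceding the statement and moves no $S$ or $T$, while the quantity $\sum_{j:\,w_j=C} j$ strictly decreases at each move, so the process terminates at a word with no $RC$ subword and with $S$ and $T$ still exactly at the corner positions. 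This word satisfies all three conditions. For the ``in particular'' clause, note that two consecutive $NE$ corners of $P$ have values of $a_1+a_2$ differing by at least $2$, since after the $E$ step that leaves one corner at least one further $N$ step is needed before the next corner; hence no two $S$'s are adjacent, and the same argument applied to the $EN$ corners of $Q$ rules out adjacent $T$'s.

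For uniqueness, I would suppose that two words $w$ and $w'$ satisfy the conditions and give the same diagram $D$. By the first assertion of Lemma \ref{lem:corners} together with the requirement that $S$ and $T$ occur precisely at corners, both words carry $S$ and $T$ in exactly the same, corner-determined, positions, so they can differ only on the non-corner positions, each of which carries an $R$ or a $C$. On each maximal block of consecutive non-corner positions the absence of an $RC$ subword forces that block to read $C^aR^b$ for some $a,b$; thus it suffices to show that the number $a$ of $C$'s in each block is determined by $D$.

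To pin down these counts I would track the partial diagram obtained after reading $w_1\cdots w_k$, observing that a letter in $\{C,S\}$ increases its width by $1$ and a letter in $\{R,T\}$ increases its height by $1$. Because each of the four operations only adds cells along the top or the right of the current shape, a corner, once created, keeps its coordinates in the final diagram; consequently, if the letter at position $k$ creates a corner at $(a_1,a_2)$, then the partial diagram just before step $k$ has its width and height recorded by $(a_1,a_2)$, so that the number of letters in $\{C,S\}$ among $w_1,\dots,w_{k-1}$ equals $a_1-1$ for a lower (that is, $S$) corner and $a_1$ for an upper (that is, $T$) corner. Hence the cumulative count of letters in $\{C,S\}$ is determined by $D$ at every corner position, and since a non-corner block contains no $S$, the number of $C$'s in each block is the difference of these cumulative counts at its two endpoints, a quantity determined by $D$. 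Together with the forced shape $C^aR^b$, this fixes every block, and therefore $w=w'$.

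I expect the crux of uniqueness to be the main obstacle: establishing that a corner's coordinates record the running width and height of the partial diagram at the moment the corner is created. The delicate point is that a corner is never displaced by the later letters, which rests on the fact that all four operations enlarge the diagram only along its top and right boundaries and never shift the already-placed cells; once this persistence is in hand, the coordinate bookkeeping is routine.
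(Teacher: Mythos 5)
Your proof is correct. The existence half is exactly the argument the paper intends: use Lemma \ref{lem:corners} to replace every non-corner $S$ by $C$ and every non-corner $T$ by $R$, then eliminate $RC$ subwords via the commutation $RC\to CR$; your potential function $\sum_{j\,:\,w_j=C}j$ supplies the termination argument, and your observation that consecutive corners on the same bounding path have positions $a_1+a_2$ differing by at least $2$ correctly yields the ``in particular'' clause. Where you genuinely go beyond the paper is uniqueness: the paper states the corollary as an immediate consequence of Lemma \ref{lem:corners} and the $RC/CR$ observation, leaving the uniqueness of the normal form implicit, whereas you make it explicit. Your argument is sound. The positions of $S$ and $T$ are pinned down by the corners of $D$ (Lemma \ref{lem:corners} together with your persistence claim, which holds because each operation adds cells only in column $p+1$ or row $q+1$ of the current $p\times q$ bounding box, so the notch square at an already-created corner can never be filled); each maximal corner-free block must read $C^aR^b$ by the no-$RC$ condition; and the number of $C$'s per block is recovered from the corner coordinates via your identity that the number of letters in $\{C,S\}$ among $w_1,\ldots,w_{k-1}$ is $a_1-1$ for a lower corner and $a_1$ for an upper corner (equivalently, the cumulative count just after a corner letter is always $a_1$). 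Two small points you should state explicitly to close the argument: for the block after the last corner (and for the cornerless case) you need the cumulative count at the end of the word, which equals the width of $D$ minus one and is thus also determined by $D$; and the fact that an $S$ sitting at a lower-corner position of $D$ necessarily \emph{creates} that corner (so your normal form really satisfies the hypothesis of the corollary) follows from persistence plus the fact that only $S$ creates lower corners and only $T$ creates upper ones. Both are minor and entirely within the scope of your bookkeeping.
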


We introduce a subclass of diagrams that will facilitate the
enumeration of non-mixed diagrams. We say that a diagram is
\emph{thick} if it is non-mixed and all horizontal or vertical
segments joining a point of $P$ with a point of $Q$ through the
interior of the diagram have length at least two.  A thick diagram has
size at least three. An abitrary non-mixed diagram can be decomposed
as a sequence of thick diagrams and single squares as follows: look at
all the rows or columns that meet the next row or column in just one
square, and cut the diagram along the edges that separate these rows
or columns (see Figure~\ref{fig:thick}).

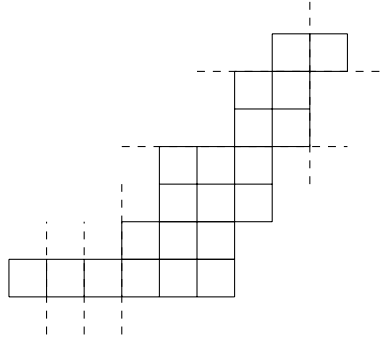
\begin{figure}%[h]
  \centering
  \begin{tikzpicture}[scale=0.5]
    \draw (0,0) grid (6,1); %
    \draw (3,1) grid (6,2);%
    \draw (4,2) grid (7,4);% 
    \draw (6,4) grid (8,6);%
    \draw (7,6) grid (9,7);%

    \draw[dashed] (1,-1) -- (1, 2);%
    \draw[dashed] (2,-1) -- (2, 2);%
    \draw[dashed] (3,-1) -- (3,3);%
    \draw[dashed] (3,4) -- (9,4); %
    \draw[dashed] (5,6) -- (10, 6);%
    \draw[dashed] (8,3) -- (8, 8);%    
  \end{tikzpicture}
  \caption{The decomposition of a non-mixed diagram into two thick
    diagrams and five single squares. This diagram is generated by the
    word $CCCTCTCSRTSRTS$, and the first thick piece alone is
    generated by $CRTCSR$.}
  \label{fig:thick}
\end{figure}

We will use this decomposition of non-mixed diagrams together with the
structure of the words describing them to obtain the generating
functions and exact counting formulas for the number of non-mixed and
thick diagrams.  We will use the symbolic method (see Chapter 1
of~\cite{flajsedg} for a thorough introduction). The main idea is that
if $A(z)=\sum_{n\geq 0} a_n z^n$ is the generating function for the
objects in a collection $\mathcal{A}$ according to some size function
(so that $a_n$ is the number of elements of $\mathcal{A}$ that have
size $n$), then the generating function for finite sequences of
objects of $\mathcal{A}$ is given by $1/(1-A(z))$, where the size of a
sequence is the sum of the sizes of its components.

The following theorem gives the numbers of thick and arbitrary
non-mixed diagrams of fixed size, which turn out to be simple
combinatorial expressions.  Recall that the \emph{Pell numbers} $P_n$
are given by the recurrence $P_{n+2}-2P_{n+1}-P_n=0$ for $n\geq 0$
with the initial conditions $P_0=0$, $P_1=1$ (see sequence
\href{https://oeis.org/A000129}{A000129} in the
OEIS~\cite{OEIS}). Their generating function is $P(z)=z/(1-2z-z^2)$.

\begin{thm}\label{thm:enumeration}
  For $m\geq 3$, there are $P_{m-2}$ thick diagrams of size $m$.  For
  $m\geq 1$, there are $(3^{m-1}+1)/2$ non-mixed diagrams of size $m$.
  Thus, for $n\geq 2$, there are $(3^{n-2}+1)/2$ connected lattice
  path matroids on $[n]$ in which every pair that consists of an
  initial and a final connected flat is non-mixed.
\end{thm}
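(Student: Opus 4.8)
The plan is to work entirely with the canonical words supplied by Corollary \ref{cor:uniqueword}: each non-mixed diagram of size $m$ corresponds to exactly one canonical word of length $m-1$ over $\{C,R,S,T\}$, so it suffices to count these words. First I would record how each letter acts on the pair $(h,w)$, where $h$ is the height of the current rightmost column and $w$ the width of the current topmost row (starting from $(1,1)$ for the initial square): $C$ sends $(h,w)\mapsto(h,w+1)$, $R$ sends $(h,w)\mapsto(h+1,w)$, $S$ sends $(h,w)\mapsto(1,w+1)$, and $T$ sends $(h,w)\mapsto(h+1,1)$. By the proof of Lemma \ref{lem:corners}, $S$ creates a lower corner (hence is admissible in a canonical word) exactly when $h\geq2$, and $T$ creates an upper corner exactly when $w\geq2$; together with the ban on the factor $RC$, this makes the set of admissible next letters depend only on the bits $[h\geq2]$, $[w\geq2]$ and on whether the previous letter was $R$. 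Thus canonical words are recognized by a small finite automaton, and I would compute its transitions explicitly (I find six reachable states, one of which is only the start).

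For the count of non-mixed diagrams I would run this automaton. The key observation is that from every reachable state exactly three letters are admissible, with two controlled exceptions: the start state admits only $C$ and $R$, and among the recurrent states exactly one has out-degree $2$ and one has out-degree $4$. Writing $a_\ell$ for the number of admissible words of length $\ell$ and summing out-degrees over the state distribution after $\ell-1$ steps gives $a_\ell=3a_{\ell-1}+(\#\text{ending at the out-degree-4 state})-(\#\text{ending at the out-degree-2 state})$. I would prove by a one-line induction, comparing the two incoming-edge recurrences (both states are fed only by $C$ from a common pair of predecessors), that these two counts differ by exactly $1$ at every length $\ell\ge1$. This yields $a_\ell=3a_{\ell-1}-1$ for $\ell\geq2$ with $a_1=2$, hence $a_\ell=(3^\ell+1)/2$, that is, $(3^{m-1}+1)/2$ non-mixed diagrams of size $m$.

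For the thick diagrams I would use the decomposition described before Figure \ref{fig:thick}: cutting along every row or column that meets the next one in a single square writes each non-mixed diagram uniquely as a corner-to-corner chain of single squares and thick pieces. Translating this cut to the canonical word, I expect the single-square junctions to be precisely the letters that create a corner (the admissible $S$ and $T$, together with the degenerate widening or heightening), while each thick piece is a maximal pinch-free block of rectangular growth; recall that thickness means every column has height at least two and every row has width at least two. I would show that a thick block is assembled only from $C$ and $R$ once the transient height-$1$ and width-$1$ steps are healed, derive the recurrence $t_m=2t_{m-1}+t_{m-2}$ for $m\geq5$ (extend a thick diagram of size $m-1$ by a row or a column, or a thick diagram of size $m-2$ by a fat step) with base values $t_3=1$ and $t_4=2$, and conclude $t_m=P_{m-2}$, i.e.\ $T(z)=z^3/(1-2z-z^2)$. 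Feeding $T(z)$ and the single-square series $z$ through the sequence construction of the symbolic method, with each junction accounting for the shared corner square and its two possible orientations, should then reproduce $N(z)=\tfrac{z(1-2z)}{(1-3z)(1-z)}=\sum_{m\ge1}\tfrac{3^{m-1}+1}{2}\,z^m$, giving a second route to the first count.

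The main obstacle is the thick case: thickness is a global condition on the \emph{final} diagram, whereas a letter such as $S$, or a degenerate $C$, momentarily creates a height-$1$ column that a later $R$ may or may not heal, so thickness cannot be read off the word letter by letter. The crux is therefore to make precise which canonical words heal all of their transient pinches — equivalently, to identify the thick blocks inside the cut decomposition and to verify that every thick diagram arises from exactly one extension in the two-step recurrence (and that the junction bookkeeping in the symbolic method is exactly right, since a naive free sequence of squares and thick pieces overcounts). Once the diagram counts are established, the matroid statement is immediate: diagrams of size $m$ correspond bijectively to connected lattice path matroids on $[m+1]$, and by definition a diagram is non-mixed exactly when the corresponding matroid has no mixed pair consisting of an initial and a final connected flat (see also Corollary \ref{cor:rookfund}). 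Hence the number of such matroids on $[n]$ equals the number of non-mixed diagrams of size $n-1$, namely $(3^{n-2}+1)/2$ for $n\geq2$.
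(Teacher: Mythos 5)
Your automaton argument for the second count is correct and is a genuinely different route from the paper's. The paper obtains $(3^{m-1}+1)/2$ by decomposing a non-mixed diagram into a sequence of thick pieces and single squares and applying the symbolic method, so there the thick count is an \emph{input} to the non-mixed count; you instead run a transfer-matrix argument directly on the canonical words of Corollary \ref{cor:uniqueword}, tracking $\bigl([h\geq 2],[w\geq 2],\text{last letter}=R\bigr)$. I checked your automaton: six reachable states; the start has out-degree $2$; among the recurrent states, exactly one has out-degree $2$ (namely $h\geq 2$, $w=1$, last letter $R$) and exactly one has out-degree $4$ (namely $h\geq2$, $w\geq2$, last letter $\neq R$). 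One bookkeeping slip: the out-degree-$2$ state is fed by the letter $R$ (not $C$) from the start, from itself, and from the state ($h\geq2$, $w=1$, last letter $\neq R$), while the out-degree-$4$ state is fed by $C$ from itself and from that same common predecessor. Your invariant is nevertheless right: writing $D_\ell$ for the count at the out-degree-$2$ state minus the count at the out-degree-$4$ state, the two parallel recurrences give $D_\ell=D_{\ell-1}+[\ell=1]$, so $D_\ell=1$ for all $\ell\geq 1$, whence $a_\ell=3a_{\ell-1}-1$ and $a_\ell=(3^\ell+1)/2$. This makes the non-mixed count, and hence the matroid statement, independent of the thick count, which the paper's proof is not.

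The first assertion (the Pell count of thick diagrams) is where your proposal has genuine gaps. First, your recalled definition of thickness is wrong: it is not enough that every row have width at least two and every column height at least two; the segments through the interior at \emph{integer} coordinates additionally require that consecutive rows (and consecutive columns) overlap in at least two squares. The non-mixed diagram $D(CRTSR)$ (two $2\times2$ blocks overlapping in a single square) satisfies your criterion but is pinched, hence not thick; its word violates the block structure of thick words (the $T$ is followed by $S$ rather than by $C$), and already at size $6$ your criterion counts more than $P_4=12$ diagrams. Second, your justification of $t_m=2t_{m-1}+t_{m-2}$ is not a bijection: the maps ``append a row'' and ``append a column'' on thick diagrams of size $m-1$ have overlapping images (the $3\times3$ square is a row-extension of the $2\times3$ rectangle \emph{and} a column-extension of the $3\times2$ rectangle), and there are two corner-creating double steps, $SR$ and $TC$, not one, so the fat-step term would naively be $2t_{m-2}$. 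The recurrence is true, and in fact the diagrams counted twice by $2t_{m-1}$ are equinumerous with thick diagrams of size $m-2$, so the two errors cancel exactly; but your proposal contains no proof of this cancellation, and you yourself flag precisely this verification as the unresolved crux. The paper sidesteps all of it by characterizing the canonical words of thick diagrams: they are the words $s_0s_1\cdots s_c$ with $s_0$ of the form $C^{\geq1}R^{\geq1}$ and each later block of the form $SC^{\geq0}R^{\geq1}$ or $TC^{\geq1}R^{\geq0}$, from which the symbolic method gives $\mathrm{Th}(z)=z^3/(1-2z-z^2)=z^2P(z)$ directly. Some such structural description of thick words, rather than a pinch-healing heuristic, is what your argument is missing; without it the first assertion of the theorem remains unproved.
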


\begin{proof}
  By Corollary \ref{cor:uniqueword}, we can associate a unique word
  $w=w_1\ldots w_{m-1}$ in the alphabet $\{C, R, S, T\}$ to each
  non-mixed diagram of size $m$.  For a letter $A$, we let
  $A^{\geq 1}$ denote a string of $A$s of length at least one. We use
  $A^{\geq 0}$ analogously.

  We first consider thick diagrams.  A diagram $D(w)$ with no corners
  is thick if and only if $w$ has at least one $C$ and at least one
  $R$, i.e., it has the form $C^{\geq 1}R^{\geq 1}$.  Now suppose that
  the diagram $D(w)$ has at least one corner, and let
  $i_1<i_2<\cdots< i_c$ be the positions in $w$ that correspond to the
  corners of $D(w)$.  The diagram $D(w)$ is thick if and only if
    \begin{enumerate}
    \item the initial subword $w_1\ldots w_{i_1-1}$ has the form
      $C^{\geq 1}R^{\geq 1}$;
    \item for all $j$ with $1\leq j \leq c$, if $w_{i_j}=S$, then the
      subword $w_{i_j+1}\ldots w_{i_{j+1}-1}$ has the form
      $C^{\geq 0}R^{\geq 1}$, and if $w_{i_j}=T$, then the subword
      $w_{i_j+1}\ldots w_{i_{j+1}-1}$ has the form
      $C^{\geq 1}R^{\geq 0}$ (where $w_{i_{c+1}-1}=w_{m-1}$ if $j=c$).
  \end{enumerate}
   
  Let $\mathrm{Th}(z)=\sum_{m\geq 3} t_m z^m$ be the generating
  function for thick diagrams according to size.  From the description
  above, the word $w$ can be split into subwords $s_0,\ldots, s_c$,
  with $c\geq 0$, such that $s_0$ is of the form
  $C^{\geq 1}R^{\geq 1}$ and, when $c>0$, for each $i\in[c]$, the
  subword $s_i$ has two possible forms, $SC^{\geq 0} R^{\geq 1}$ or
  $TC^{\geq 1}R^{\geq 0}$.  Translating this description into
  generating functions gives
  \begin{equation}\label{eq:th}
    \mathrm{Th}(z) = z  \frac{z^2}{(1-z)^2}
    \frac{1}{1-2\frac{z^2}{(1-z)^2}} 
    =\frac{z^3}{1-2z-z^2}=z^2P(z),    
  \end{equation}
  where the first $z$ accounts for the initial square of the diagram.

  We construct an arbitrary non-mixed diagram by gluing consecutive
  terms in a sequence of thick diagrams and single squares, as
  explained above.  Given consecutive terms $D_1$ and $D_2$ (thick
  diagrams or single squares), there are two ways to glue them along
  one edge: either the last north step of the bottom path of $D_1$ is
  glued to the first north step of the top path of $D_2$, or the last
  east step of the top path of $D_1$ is glued to the first east step
  of the bottom path of $D_2$.  Thus, a non-mixed diagram is a
  sequence of diagrams $D_1,\ldots, D_k$ such that $k\geq 1$, each
  $D_i$ is either thick or a single square for all $i \in [k]$, and
  each $D_i$ has a mark on the last step of the top or of the bottom
  path, for all $i \in [k-1]$. This decomposition gives the following
  generating function for the number of non-mixed diagrams according
  to size:
  $$\frac{1}{1-(2\mathrm{Th}(z)+2z)}(\mathrm{Th}(z)+z)=
  \frac{z-2z^2}{(1-z)(1-3z)}=\sum_{m\geq 1} \frac{1}{2}(3^{m-1}+1)
  z^m.\qedhere$$ 
\end{proof}
  
As shown in \cite[Section 4]{lpm}, the number of connected lattice
path matroids on $[n]$, with the usual order, is the Catalan number
$C_{n-1}$.  (As above, that does not take isomorphism into account;
that counts diagrams.  The order of magnitude is the same if we count
up to matroid isomorphism; see \cite[Theorem 4.2]{lpm}.)  It is well
known that the Catalan numbers $C_n$ grow like
$4^n/(n^{3/2}\sqrt{\pi})$, so $\lim_{n\to\infty}3^{n-2}/C_{n-1}=0$,
which gives the  corollary below.

\begin{cor}
  Asymptotically, almost no lattice path matroids are configuration
  unique.
\end{cor}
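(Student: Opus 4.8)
The plan is to pin down exactly which lattice path matroids are configuration unique, count them, and compare that count with the total number of lattice path matroids on $[n]$. The structural input is already assembled in Section \ref{sec:lpm}: by Theorem \ref{thm:lpmonuniqueconfig} together with Corollary \ref{cor:modnotmixed}, a connected lattice path matroid that has a mixed pair (equivalently, a non-modular pair consisting of an initial and a final connected flat) fails to be configuration unique, whereas by Theorem \ref{thm:nomixingconfigunique} a lattice path matroid all of whose connected components have no mixed pairs is configuration unique. Combined with Lemma \ref{lem:configdirsum}, which reduces configuration uniqueness to the restrictions to connected components, this shows that a lattice path matroid is configuration unique if and only if no connected component has a mixed pair. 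In particular, the configuration-unique connected lattice path matroids on $[n]$ are precisely the non-mixed ones, which Theorem \ref{thm:enumeration} counts as $(3^{n-2}+1)/2$.

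I would then bring in the enumeration of all connected lattice path matroids. Since the number of connected lattice path matroids on $[n]$ with the usual order is the Catalan number $C_{n-1}$ (see \cite[Section 4]{lpm}), the proportion of connected lattice path matroids on $[n]$ that are configuration unique is
\[
\frac{(3^{n-2}+1)/2}{C_{n-1}}.
\]
It remains to verify that this ratio tends to $0$. Using the standard estimate $C_{n-1}\sim 4^{n-1}/\bigl((n-1)^{3/2}\sqrt{\pi}\bigr)$, the ratio is asymptotic to a constant multiple of $(n-1)^{3/2}(3/4)^n$, which tends to $0$ because the geometric factor $(3/4)^n$ overwhelms the polynomial factor; this is exactly the limit $\lim_{n\to\infty}3^{n-2}/C_{n-1}=0$ recorded before the statement.

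Two routine bookkeeping points remain. First, $C_{n-1}$ counts diagrams, that is, labeled matroids on $[n]$ with the usual order; by \cite[Theorem 4.2]{lpm} the count up to isomorphism has the same order of magnitude, so the conclusion is insensitive to that convention. Second, to pass from connected to arbitrary lattice path matroids one decomposes each diagram into its connected pieces along the diagonal and translates the characterization above into generating functions: the configuration-unique ones are exactly the sequences of non-mixed connected pieces, whose generating function has its dominant singularity strictly larger than $1/4$, so their number grows at a rate below $4^n$, while the sequence construction for all lattice path matroids is governed by the Catalan singularity at $1/4$, so the proportion again vanishes. I expect no genuine obstacle here: the real content of the corollary is the combination of the structural theorems of Section \ref{sec:lpm} with the enumeration in Theorem \ref{thm:enumeration}, and the only new step is the elementary asymptotic comparison of $3^n$ against $4^n$.
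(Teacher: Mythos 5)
Your proposal is correct and follows essentially the same route as the paper: it combines the characterization from Section \ref{sec:lpm} (configuration-unique connected lattice path matroids are exactly the non-mixed ones) with the count $(3^{n-2}+1)/2$ from Theorem \ref{thm:enumeration}, and compares against the Catalan number $C_{n-1}$ of connected diagrams using the standard asymptotic $C_n\sim 4^n/(n^{3/2}\sqrt{\pi})$, which is precisely the argument the paper gives in the paragraph preceding the corollary. Your additional bookkeeping for disconnected matroids (via the singularity of the sequence generating function) goes beyond what the paper records explicitly, but it is sound and does not change the substance of the argument.
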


The decompositions in the proof of Theorem~\ref{thm:enumeration} allow
us to refine the enumeration by taking the ranks of the corresponding
lattice path matroids into account.  The rank of the matroid is the
number of rows in the corresponding diagram.  In this enumeration we
also encounter some known combinatorial numbers, which we review next.

Recall that the \emph{Delannoy numbers} $d_{i,j}$ count the number of
paths from $(0,0)$ to $(i,j)$ with steps $(1,0), (0,1)$ and $(1,1)$
(see~\cite{delannoy} and sequence
\href{https://oeis.org/A008288}{A008288} in the
OEIS~\cite{OEIS}). Their bivariate generating function is
$$\mathrm{Del}(z,y)=\sum_{i,j\geq 0} d_{i,j}z^iy^j=\frac{1}{1-(z+y+zy)}.$$
A partition $\{S_1,\ldots, S_r\}$ of the set $[m]$ is
\emph{order-consecutive} if there is some permutation
$k_1,\ldots, k_r$ of $[r]$ such that the sets
$S_{k_1}\cup \cdots \cup S_{k_{\ell}}$ are intervals for all
$ \ell \in [r]$ (see~\cite{orderconsec} and sequence
\href{https://oeis.org/A056241}{A056241} in the OEIS~\cite{OEIS}). The
bivariate generating function for the number $\mathrm{oc}_{m,r}$ of
order-consecutive partitions of $[m]$ with $r$ parts is
$$\mathrm{OC}(z,y)=\sum_{m\geq r\geq 1} \mathrm{oc}_{m,r} z^m
y^r=zy\frac{1-z(1+y)}{1-2z(1+y)+z^2(1+y+y^2)}.$$

\begin{thm}\label{thm:bivariate}
  The number of thick diagrams of size $m$ with $r$ rows is the
  Delannoy number $d_{m-r-1, r-2}$. The number of non-mixed diagrams
  of size $m$ with $r$ rows is the number of $r$-part order-consecutive
  partitions of $[m]$.
\end{thm}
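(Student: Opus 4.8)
The plan is to establish both bijective/enumerative claims by exploiting the unique-word encoding of non-mixed diagrams from Corollary \ref{cor:uniqueword}, together with the generating-function machinery already set up in the proof of Theorem \ref{thm:enumeration}. The key observation is that the rank of the lattice path matroid equals the number of rows in the diagram, and in the word encoding, the number of rows can be tracked: the operations $R$ (duplicate topmost row) and $T$ (add a square above the rightmost column) each increase the row count by one, while $C$ and $S$ leave it unchanged. Thus I would refine each generating function from the previous proof by introducing a second variable $y$ that marks row-increasing operations, and then compare the resulting bivariate generating functions to those of the Delannoy numbers and order-consecutive partitions.

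For the thick-diagram count, I would revisit Equation (\ref{eq:th}). The initial subword $s_0$ of the form $C^{\geq 1}R^{\geq 1}$ contributes at least one $R$; each subsequent block $s_i$ is either $SC^{\geq 0}R^{\geq 1}$ (contributing at least one $R$) or $TC^{\geq 1}R^{\geq 0}$ (contributing the initial $T$). Tracking the variable $y$ through each block, I expect the two block-types to become symmetric in the bookkeeping of rows versus columns, yielding a bivariate generating function of the shape $\mathrm{Th}(z,y)=z^2 \cdot \frac{zy}{(1-z)(1-zy)} \cdot \frac{1}{1-\left(\frac{zy}{(1-z)(1-zy)}+\frac{z}{(1-z)(1-zy)}\right)}$ or similar, which I would then simplify and match against $\sum_{m,r} d_{m-r-1,r-2} z^m y^r$ using $\mathrm{Del}(z,y)=1/(1-(z+y+zy))$. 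The index shift $m-r-1$ and $r-2$ accounts for the initial square (size offset) and the two forced rows implicit in a thick diagram (a thick diagram has size at least three and at least two rows).

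For the non-mixed count, I would similarly refine the sequence construction: a non-mixed diagram is a marked sequence of thick diagrams and single squares, and I would assemble the bivariate generating function from $\mathrm{Th}(z,y)$ and the single-square contribution, being careful that gluing along a north step versus an east step interacts correctly with the row count. I would then verify this equals $\mathrm{OC}(z,y)$. The comparison with order-consecutive partitions is most naturally done bijectively: a thick-diagram-and-square decomposition of a non-mixed diagram should correspond directly to an order-consecutive partition of $[m]$ with $r$ parts, where the parts track the rows and the order-consecutive condition mirrors the constraint that the cumulative unions of rows and columns form intervals (exactly the structural feature captured by Lemma \ref{lem:suppint} and the interval structure of connected flats).

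The main obstacle I anticipate is the careful bookkeeping in the bivariate refinement, specifically ensuring that the variable $y$ correctly counts rows through the gluing operations that join consecutive pieces, since a glued edge is shared and one must avoid double-counting a row or column at each junction. A secondary difficulty is pinning down the exact index shifts ($m-r-1$, $r-2$) in the Delannoy identification, which requires tracking precisely how the size and rank of the initial square and the forced structure of a thick diagram contribute; the cleanest route is probably to compute the refined generating function explicitly and then algebraically confirm it equals the known expressions $\mathrm{Del}(z,y)$ (after the shift) and $\mathrm{OC}(z,y)$, rather than constructing an explicit bijection, though a bijection would be illuminating for the order-consecutive case.
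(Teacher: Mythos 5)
Your proposal is essentially the paper's own proof: the paper refines the block decomposition from Theorem \ref{thm:enumeration} with a second variable $y$ marking the row-increasing operations $R$ and $T$, obtains $\mathrm{Th}(z,y)=z^3y^2\,\mathrm{Del}(z,zy)$, and then treats general non-mixed diagrams via the marked-sequence gluing (east-step gluings contributing $\mathrm{Th}(z,y)+zy$ and north-step gluings $\mathrm{Th}(z,y)/y+z$, since the latter lose one row), matching the result against $\mathrm{OC}(z,y)$ algebraically rather than bijectively, exactly as you propose. The only fix needed in your sketch is that the two block types $SC^{\geq 0}R^{\geq 1}$ and $TC^{\geq 1}R^{\geq 0}$ contribute the \emph{same} factor $\frac{z^2y}{(1-z)(1-zy)}$ and the initial square contributes $zy$, so that
$\mathrm{Th}(z,y)=zy\cdot\frac{z^2y}{(1-z)(1-zy)}\cdot\frac{1}{1-\frac{2z^2y}{(1-z)(1-zy)}}=\frac{z^3y^2}{1-z-zy-z^2y}=z^3y^2\,\mathrm{Del}(z,zy)$,
from which the index shift $d_{m-r-1,\,r-2}$ follows by coefficient extraction.
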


\begin{proof}
  Define the bivariate generating function
  $\mathrm{Th}(z,y)=\sum_{m,r} t_{m,r} y^r z^m$, where $t_{m,r}$ is
  the number of thick diagrams of size $m$ with $r$ rows. Note that
  $t_{m,r}>0$ if and only if $m>r>1$.

  For thick diagrams, the rank increases by one exactly when an
  operation $R$ or $T$ is performed. With this observation, the
  analogue of equation~(\ref{eq:th}) is
      $$\mathrm{Th}(z,y)
    = zy \frac{z^2y}{(1-z)(1-zy)} \frac{1}{1-
      \frac{2z^2y}{(1-z)(1-zy)}}
    =\frac{z^3y^2}{1-z-zy-z^2y}=z^3y^2\mathrm{Del}(z,zy).$$
    The
  coefficient of $z^iy^j$ in $\mathrm{Del}(z,zy)$ is $d_{i-j,j}$, so
  the coefficient of $z^m y^r$ in $\mathrm{Th}(z,y)$ is
  $d_{m-r-1, r-2}$.
    
  We obtain a general non-mixed diagram from a sequence of thick
  diagrams or single squares, with two ways to glue consecutive
  diagrams.  If we glue two diagrams along east steps, then the rank
  of the new diagram is the sum of the ranks of the two original
  diagrams; if we glue along north steps, then we must substract one
  from this sum. This yields the following generating function
  $$\frac{1}{1-(\mathrm{Th}(z,y)+zy+
    \frac{\mathrm{Th}(z,y)}{y}+z)}(\mathrm{Th}(z,y)+zy)
  =zy\frac{1-z(1+y)}{1-2z(1+y)+z^2(1+y+y^2)}.$$  
\end{proof}

It would be interesting to find bijective proofs of
Theorems~\ref{thm:enumeration} and~\ref{thm:bivariate}.

\section{Configuration-unique matroids}\label{sec:confrecon}

To start, we give infinitely many pairs $M,M'$ of matroids that have
the same $\mathcal{G}$-invariant but different configurations, and $M$
is a lattice path matroid that is not configuration unique while $M'$
is configuration unique.  Fix positive integers $b$ and $k$.  Let $M$
be $M[P,Q]$ where $P$ is $E^{2b+k}N^kE^bN^kE^bN^k$ and $Q$ is
$N^kE^bN^kE^bN^kE^{2b+k}$.  Figure \ref{fig:diffconfig} illustrates
this for $b=k=1$.  Figure \ref{fig:diffconfigN} shows a matroid $M'$
that has the same $\mathcal{G}$-invariant as that in Figure
\ref{fig:diffconfig}, but the configurations differ.  Figure
\ref{fig:diffconfig2} shows the lattice of cyclic flats of $M$ in the
general case, as well as the lattice of cyclic flats of a matroid $M'$
that generalizes that in Figure \ref{fig:diffconfigN}, that is not a
lattice path matroid, and that has the same $\mathcal{G}$-invariant as
$M$.  It is easy to verify properties (Z0)--(Z3) in Theorem
\ref{thm:axioms} for the sets and ranks given in Figure
\ref{fig:diffconfig2}.  The fact that the configurations yield the
same $\mathcal{G}$-invariant is an instance of \cite[ Theorem
4.1]{sameG}.

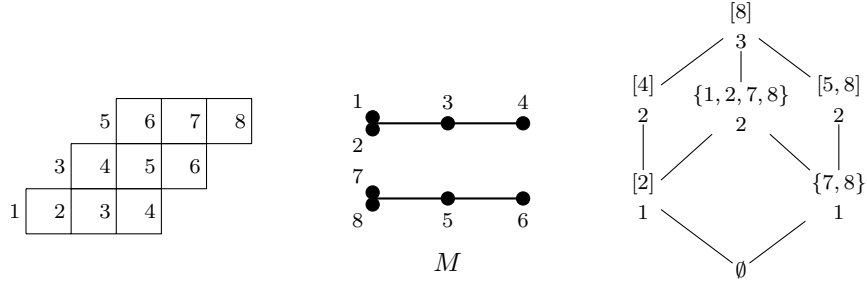
\begin{figure}
  \centering
  \begin{tikzpicture}[scale=0.6]
    \draw (0,0) grid (3,1); %
    \draw (1,1) grid (4,2); %
    \draw (2,2) grid (5,3);%

    \draw (-0.25,0.5) node {\footnotesize$1$};%
    \draw (0.75,0.5) node {\footnotesize$2$};%
    \draw (1.75,0.5) node {\footnotesize$3$};%
    \draw (2.75,0.5) node {\footnotesize$4$};%   

    \draw (0.75,1.5) node {\footnotesize$3$};%
    \draw (1.75,1.5) node {\footnotesize$4$};%
    \draw (2.75,1.5) node {\footnotesize$5$};%
    \draw (3.75,1.5) node {\footnotesize$6$};%

    \draw (1.75,2.5) node {\footnotesize$5$};%
    \draw (2.75,2.5) node {\footnotesize$6$};%
    \draw (3.75,2.5) node {\footnotesize$7$};%
    \draw (4.75,2.5) node {\footnotesize$8$};%

        \node at (1,-0.8) {\color{white}.};%
  \end{tikzpicture}
    \hspace{1cm}
  \begin{tikzpicture}[scale=1]
    \filldraw (0,4.92) node[below left] {\footnotesize $2$} circle
    (2.5pt);%
    \filldraw (0,5.08) node[above left] {\footnotesize $1$} circle
    (2.5pt);%
    \filldraw (1,5) node[above=2pt] {\footnotesize $3$} circle
    (2.5pt);%
    \filldraw (2,5) node[above=2pt] {\footnotesize $4$} circle
    (2.5pt);%
    \filldraw (0,3.92) node[below left] {\footnotesize $8$} circle
    (2.5pt);%
    \filldraw (0,4.08) node[above left] {\footnotesize $7$} circle
    (2.5pt);%
    \filldraw (1,4) node[below=2pt] {\footnotesize $5$} circle
    (2.5pt);%
    \filldraw (2,4) node[below=2pt] {\footnotesize $6$} circle
    (2.5pt);%
    \draw[thick](0,4)--(2,4);%
    \draw[thick](0,5)--(2,5);%
    \node at (1,3.15) {$M$};%
  \end{tikzpicture}
    \hspace{1cm}
  \begin{tikzpicture}[scale=1.3]
    \node[inner sep = 0.3mm] (em) at (0,-0.25) {\footnotesize
      $\emptyset$};%

    \node[inner sep = 0.3mm] (1a) at (-1,0.5) {\footnotesize
      $\displaystyle\genfrac{}{}{0pt}{}{[2]}{1}$};%
    \node[inner sep = 0.3mm] (1b) at (1,0.5) {\footnotesize
      $\displaystyle\genfrac{}{}{0pt}{}{\{7,8\}}{1}$};%
    
    \node[inner sep = 0.3mm] (2a) at (-1,1.5) {\footnotesize
      $\displaystyle\genfrac{}{}{0pt}{}{[4]}{2}$};%
    \node[inner sep = 0.3mm] (2b) at (0,1.4) {\footnotesize
      $\displaystyle\genfrac{}{}{0pt}{}{\{1,2,7,8\}}{2}$};%
    \node[inner sep = 0.3mm] (2c) at (1,1.5) {\footnotesize
      $\displaystyle\genfrac{}{}{0pt}{}{[5,8]}{2}$};%
   
    \node[inner sep = 0.3mm] (3) at (0,2.25) {\footnotesize
      $\displaystyle\genfrac{}{}{0pt}{}{[8]}{3}$};%

    \foreach \from/\to in {em/1a,em/1b,1a/2a,1a/2b,1b/2b,1b/2c,2a/3,
      2b/3,2c/3} \draw(\from)--(\to);%
  \end{tikzpicture}
  \caption{A lattice path matroid $M$ and its lattice of cyclic flats.}
  \label{fig:diffconfig}
\end{figure}

\begin{figure}
  \centering
   \begin{tikzpicture}[scale=1]
     \filldraw (5.75,4.42) node[below left] {\footnotesize $2$} circle
     (2.5pt);%
     \filldraw (5.75,4.58) node[above left] {\footnotesize $1$} circle
     (2.5pt);%
     \filldraw (6.75,4.75) node[above=2pt] {\footnotesize $3$} circle
     (2.5pt);%
     \filldraw (7.75,5) node[above=2pt] {\footnotesize $4$} circle
     (2.5pt);%
     \filldraw (6.75,4.25) node[below=2pt] {\footnotesize $5$} circle
     (2.5pt);%
     \filldraw (7.75,4) node[below=2pt] {\footnotesize $6$} circle
     (2.5pt);%
     \filldraw (8.25,4.42) node[below right] {\footnotesize $8$}
     circle (2.5pt);%
     \filldraw (8.25,4.58) node[above right] {\footnotesize $7$}
     circle (2.5pt);%
     \draw[thick](5.75,4.5)--(7.75,4);%
     \draw[thick](5.75,4.5)--(7.75,5);%
     \node at (6.75,3) {$M'$};%
   \end{tikzpicture}
   \hspace{1.5cm}
   \begin{tikzpicture}[scale=1.3]
    \node[inner sep = 0.3mm] (em) at (1,-0.25) {\footnotesize
      $\emptyset$};%

    \node[inner sep = 0.3mm] (1a) at (0,0.5)  {\footnotesize
      $\displaystyle\genfrac{}{}{0pt}{}{\{7,8\}}{1}$};%
    \node[inner sep = 0.3mm] (1b) at (2,0.5)  {\footnotesize
      $\displaystyle\genfrac{}{}{0pt}{}{[2]}{1}$};%

    \node[inner sep = 0.3mm] (2a) at (1,1.4)  {\footnotesize
      $\displaystyle\genfrac{}{}{0pt}{}{\{1,2,7,8\}}{2}$};%
    \node[inner sep = 0.3mm] (2b) at (2,1.4)  {\footnotesize
      $\displaystyle\genfrac{}{}{0pt}{}{[4]}{2}$};%
    \node[inner sep = 0.3mm] (2c) at (3,1.4)  {\footnotesize
      $\displaystyle\genfrac{}{}{0pt}{}{\{1,2,5,6\}}{2}$};%
 
    \node[inner sep = 0.3mm] (3) at (2,2.25)  {\footnotesize
      $\displaystyle\genfrac{}{}{0pt}{}{[8]}{3}$};%

    \foreach \from/\to in {em/1a,em/1b,1a/2a,1b/2a,1b/2b,1b/2c,2a/3,
      2b/3,2c/3} \draw(\from)--(\to);%
  \end{tikzpicture}
  \caption{A matroid $M'$ and its lattice of cyclic flats.  This
    matroid has the same $\mathcal{G}$-invariant as that in Figure
    \ref{fig:diffconfig}, but the configurations differ.}
  \label{fig:diffconfigN}
\end{figure}
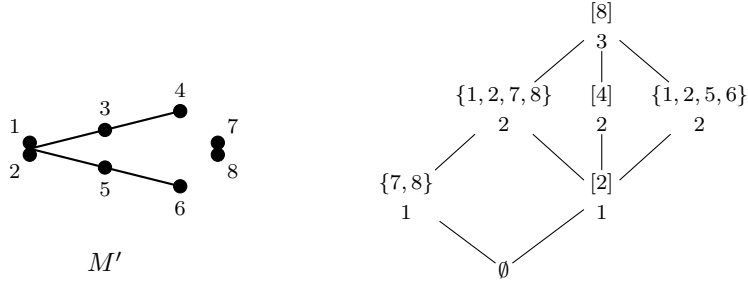

\begin{figure}
  \centering
  \begin{tikzpicture}[scale=1.3]
    \node[inner sep = 0.3mm] (em) at (0,-0.25) {\footnotesize
      $\emptyset$};%

    \node[inner sep = 0.3mm] (1a) at (-1,0.5) {\footnotesize
      $\displaystyle\genfrac{}{}{0pt}{}{W}{k}$};%
    \node[inner sep = 0.3mm] (1b) at (1,0.5) {\footnotesize
      $\displaystyle\genfrac{}{}{0pt}{}{Z}{k}$};%
    
    \node[inner sep = 0.3mm] (2a) at (-1,1.5) {\footnotesize
      $\displaystyle\genfrac{}{}{0pt}{}{W\cup X}{2k}$};%
    \node[inner sep = 0.3mm] (2b) at (0,1.4) {\footnotesize
      $\displaystyle\genfrac{}{}{0pt}{}{W\cup Z}{2k}$};%
    \node[inner sep = 0.3mm] (2c) at (1,1.5) {\footnotesize
      $\displaystyle\genfrac{}{}{0pt}{}{Y\cup Z}{2k}$};%
   
    \node[inner sep = 0.3mm] (3) at (0,2.25) {\footnotesize
      $\displaystyle\genfrac{}{}{0pt}{}{W\cup X\cup Y\cup Z}{3k}$};%

    \foreach \from/\to in {em/1a,em/1b,1a/2a,1a/2b,1b/2b,1b/2c,2a/3,
      2b/3,2c/3} \draw(\from)--(\to);%

    \node at (0,-0.7) {$\mathcal{Z}(M)$};%

  \end{tikzpicture}
    \hspace{1cm}
  \begin{tikzpicture}[scale=1.3]
    \node[inner sep = 0.3mm] (em) at (1,-0.25) {\footnotesize
      $\emptyset$};%

    \node[inner sep = 0.3mm] (1a) at (0,0.5)  {\footnotesize
      $\displaystyle\genfrac{}{}{0pt}{}{Z}{k}$};%
    \node[inner sep = 0.3mm] (1b) at (2,0.5)  {\footnotesize
      $\displaystyle\genfrac{}{}{0pt}{}{W}{k}$};%

    \node[inner sep = 0.3mm] (2a) at (1,1.4)  {\footnotesize
      $\displaystyle\genfrac{}{}{0pt}{}{W\cup Z}{2k}$};%
    \node[inner sep = 0.3mm] (2b) at (2,1.4)  {\footnotesize
      $\displaystyle\genfrac{}{}{0pt}{}{W\cup X}{2k}$};%
    \node[inner sep = 0.3mm] (2c) at (3,1.4)  {\footnotesize
      $\displaystyle\genfrac{}{}{0pt}{}{W\cup Y}{2k}$};%
 
    \node[inner sep = 0.3mm] (3) at (2,2.25)  {\footnotesize
      $\displaystyle\genfrac{}{}{0pt}{}{W\cup X\cup Y\cup Z}{3k}$};%

    \foreach \from/\to in {em/1a,em/1b,1a/2a,1b/2a,1b/2b,1b/2c,2a/3,
      2b/3,2c/3} \draw(\from)--(\to);%

         \node at (1.5,-0.7) {$\mathcal{Z}(M')$};%
  \end{tikzpicture}
  \caption{The lattices of cyclic flats of a lattice path matroid $M$
    and a matroid $M'$ that have different configurations but the same
    $\mathcal{G}$-invariant.  The sets $W$, $X$, $Y$, and $Z$ are the
    intervals $[(i-1)(b+k)+1,i(b+k)]$ for $i\in[4]$, respectively;
    each has $b+k$ elements.}
  \label{fig:diffconfig2}
\end{figure}
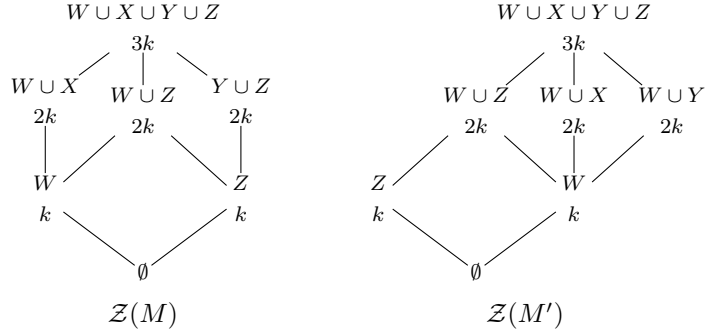

The matroid $M$ in Figure \ref{fig:diffconfig2} is not configuration
unique since $(W\cup X,Y\cup Z)$ is a mixed pair.  However, the next
theorem applies to the matroid $M'$ in Figure \ref{fig:diffconfig2},
which therefore is configuration unique.  So, $M'$ is configuration
unique but not $\mathcal{G}$ unique.
  
\begin{thm}\label{thm:congifrecon}
  For a matroid $M$, if, for all $A,B\in\mathcal{Z}(M)$, the pair
  $(A,B)$ is modular and $A\cap B\in\mathcal{Z}(M)$, then $M$ is
  configuration unique.
\end{thm}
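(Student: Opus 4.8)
The plan is to show that every matroid $N$ with the same configuration as $M$ is isomorphic to $M$. Fix a lattice isomorphism $\Phi\colon\mathcal{Z}(M)\to\mathcal{Z}(N)$ preserving size and rank, and write $A'=\Phi(A)$. By the isomorphism criterion stated after Theorem \ref{thm:axioms}, it suffices to produce a bijection $\phi\colon E(M)\to E(N)$ that induces $\Phi$ on cyclic flats, since $\Phi$ already matches ranks.

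First I would observe that both hypotheses pass to $N$. Because $\Phi$ preserves joins, meets, and rank, every pair $(A',B')$ of cyclic flats of $N$ is modular. The key step, which I expect to require the most care, is to deduce that $\mathcal{Z}(N)$ is \emph{also} closed under intersection, that is, $A'\cap B'=A'\meet B'$. Using the coloop identity $r_N(A'\cap B')=r_N(A'\meet B')+|(A'\cap B')-(A'\meet B')|$ (as in the proof of Theorem \ref{thm:twofilters}), I would combine submodularity with $r_N(A'\cup B')=r_N(A'\join B')$ and the modularity of $(A',B')$ to get $r_N(A'\cap B')\leq r_N(A')+r_N(B')-r_N(A'\join B')=r_N(A'\meet B')$; since $A'\meet B'\subseteq A'\cap B'$ forces the reverse inequality, the difference $|(A'\cap B')-(A'\meet B')|$ vanishes, so $A'\cap B'=A'\meet B'$ is a cyclic flat. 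In particular $|A'\cap B'|=|A'\meet B'|=|A\meet B|=|A\cap B|$, and, iterating, the intersection of any subfamily of cyclic flats is a cyclic flat in both $M$ and $N$, with sizes matched by $\Phi$.

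Next I would partition the ground sets by minimal cyclic flats. Since $\mathcal{Z}(M)$ is closed under intersection, each non-coloop $e$ (every non-coloop lies in the top cyclic flat) belongs to a unique smallest cyclic flat $F_e$, and $\mathcal{Z}_e$ is the principal filter above $F_e$; this groups the non-coloops into blocks $B_F=F\setminus\bigcup\{F'\in\mathcal{Z}(M):F'\subsetneq F\}$ indexed by $F\in\mathcal{Z}(M)$, while the coloops form $E(M)$ minus the top cyclic flat. By inclusion--exclusion over the proper cyclic subflats of $F$, all of whose intersections are cyclic flats and hence have sizes recorded in the configuration, the size $|B_F|$ is determined by the configuration; running the identical computation in $N$ (valid since $\mathcal{Z}(N)$ is now known to be intersection-closed) gives $|B_F|=|B'_{\Phi(F)}|$, where $B'_G$ is the analogous block of $N$, and the coloop counts agree as well. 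I would then choose $\phi$ to send each $B_F$ bijectively onto $B'_{\Phi(F)}$ and coloops to coloops.

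Finally I would verify that $\phi$ induces $\Phi$. Because every element of a cyclic flat $A$ lies in a minimal cyclic flat contained in $A$, one has $A=\bigsqcup_{F\subseteq A}B_F$ in $M$ and, by intersection-closure in $N$, $\Phi(A)=\bigsqcup_{G\subseteq\Phi(A)}B'_G$ in $N$; since $\Phi$ is an order isomorphism, these two decompositions correspond term by term under $\phi$, so $\phi(A)=\Phi(A)$. Thus $\phi$ maps $\mathcal{Z}(M)$ onto $\mathcal{Z}(N)$ compatibly with $\Phi$ and preserves rank, so it is an isomorphism of $M$ onto $N$, and $M$ is configuration unique.
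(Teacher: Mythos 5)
Your proposal is correct and takes essentially the same route as the paper's proof: the identical submodularity-plus-meet argument showing that intersection-closure (and matching of intersection sizes) transfers from $\mathcal{Z}(M)$ to $\mathcal{Z}(N)$, followed by the same inclusion--exclusion count matching the sets of elements whose minimal cyclic flat is a given $F$. The only difference is organizational: the paper assembles the bijection by recursion on the height of cyclic flats, whereas you define it in one step on the blocks $B_F$ (which coincide with the sets $F-E_{h(F)-1}(M)$ appearing in the paper's recursion), so the underlying argument is the same.
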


\begin{proof}
  Without loss of generality, we may assume that $M$ has no coloops.
  Let $N$ be a matroid that has the same configuration as $M$, so $N$
  has no coloops and there is a lattice isomorphism
  $\Phi:\mathcal{Z}(M)\to\mathcal{Z}(N)$ that preserves the size and
  rank of each cyclic flat.  To show that $M$ is configuration unique,
  we construct a bijection $\phi:E(M)\to E(N)$ for which
  $\Phi(A)=\phi(A)$ for all $A\in \mathcal{Z}(M)$, which therefore is
  the isomorphism we need.

  We claim that $A'\cap B'\in\mathcal{Z}(N)$ for all
  $A',B'\in\mathcal{Z}(N)$.  Take $A,B\in\mathcal{Z}(M)$ with
  $A'=\Phi(A)$ and $B'=\Phi(B)$.  The assumptions that
  $A\cap B\in \mathcal{Z}(M)$ (so $A\meet B=A\cap B$) and that $(A,B)$
  is a modular pair give
  $$r_M(A)+r_M(B)= r_M(A\join B)+r_M(A\meet B).$$  From that equality,
  the properties of $\Phi$ give
  $$r_N(A')+r_N(B')= r_N(A'\join B')+r_N(A'\meet B').$$ The
  submodular inequality
  $r_N(A')+r_N(B')\geq r_N(A'\cup B')+r_N(A'\cap B')$ along with the
  inclusion $A'\meet B'\subseteq A'\cap B'$ force the flat $A'\cap B'$
  to be the cyclic flat $A'\meet B'$, so, as claimed,
  $A'\cap B'\in\mathcal{Z}(N)$.
  
  For each $A\in \mathcal{Z}(M)$, let its \emph{height in
    $\mathcal{Z}(M)$}, denoted $h(A)$, be the largest integer $h$ for
  which there is a chain
  $A_1\subsetneq A_2\subsetneq \cdots \subsetneq A_h\subsetneq A$ with
  all $A_i$ in $\mathcal{Z}(M)$.  Thus, $\cl_M(\emptyset)$ has height
  $0$ and the covers of $\cl_M(\emptyset)$ in $\mathcal{Z}(M)$ have
  height $1$.  Let $E(M)$ have height $t$, which, adapting the
  definition to $N$, is also the height of $E(N)$.  Note that the
  height of $A$ in $\mathcal{Z}(M)$ is the height of $\Phi(A)$ in
  $\mathcal{Z}(N)$.  For each element $e\in E(M)$, let the
  \emph{height $h(e)$ of $e$} be
  $\min\{h(A)\,:\,e\in A\in\mathcal{Z}(M)\}$, the least height of a
  cyclic flat that contains $e$.  Thus, $h(e)=0$ if and only if $e$ is
  a loop, and elements of height $t$ are in no proper cyclic flats.
  Let $E_i(M)=\{e\in E(M)\,:\,h(e)\leq i\}$ for $i$ with
  $0\leq i\leq t$, and let $E_i(N)$ be defined similarly.  We define
  $\phi$ recursively: as $i$ ranges from $0$ to $t$, we define
  bijections $\phi_i:E_i(M)\to E_i(N)$ for which $\Phi(A)=\phi_i(A)$
  for all $A\in \mathcal{Z}(M)$ with $h_M(A)\leq i$, and, if $i>0$,
  the restriction of $\phi_i$ to $E_{i-1}(M)$ is $\phi_{i-1}$.  Thus,
  the bijection $\phi$ that we want is $\phi_t$.  For $i=0$, since
  $|\cl_M(\emptyset)|=|\cl_N(\emptyset)|$, let $\phi_0$ be any
  bijection from $\cl_M(\emptyset)$ onto $\cl_N(\emptyset)$.  Now
  assume that for some $k\in[t]$, the bijection
  $\phi_{k-1}:E_{k-1}(M)\to E_{k-1}(N)$ has the required properties.
  Each element $e\in E_k(M)-E_{k-1}(M)$ is in exactly one cyclic flat
  $F$ with $h(F)=k$, for if $e$ were in two such cyclic flats, say $F$
  and $F'$, then $e\in F\cap F'$, and, by the hypotheses of the
  theorem, $F\cap F'$ is a cyclic flat, and clearly $h(F\cap F')<k$,
  contrary to having $e \not\in E_{k-1}(M)$.  Let $F$ be a cyclic flat
  with $h(F)=k$.  Now if $f\in F$ and $h(f)<k$, then $f$ is in a
  cyclic flat $F'$ with $h(F')<k$, and so $f$ is in the cyclic flat
  $F\cap F'$, which is properly contained in $F$ and has height less
  than $k$.  Let $F_0$ be the union of the cyclic flats that are
  properly contained in $F$, so $E_{k-1}(M)\cap F=F_0$.  By the
  principle of inclusion/exclusion, $|F_0|$ can be found from the
  sizes of the cyclic flats that are properly contained in $F$, and
  the sizes of intersections of such cyclic flats.  Since (i) such
  intersections are cyclic flats that are properly contained in $F$,
  (ii) $\mathcal{Z}(N)$ is closed under intersections, and (iii) the
  bijection $\Phi$ preserves sizes and inclusions of cyclic flats, we
  get $|E_{k-1}(N)\cap \Phi(F)|=|F_0|$.  Thus,
  $|F-E_{k-1}(M)| =|\Phi(F)-E_{k-1}(N)|$. Therefore we can extend
  $\phi_{k-1}:E_{k-1}(M) \to E_{k-1}(N)$ to $\phi_k:E_k(M) \to E_k(N)$
  by, for any cyclic flat $F$ of height $k$, letting the restriction
  to $F-E_{k-1}(M)$ be any bijection onto $\Phi(F)-E_{k-1}(N)$; such
  an extension is well defined since each element in
  $E_k(M)-E_{k-1}(M)$ is in exactly one cyclic flat of height $k$.  As
  noted above, $\phi_t$ is the isomorphism of $M$ onto $N$ that we
  needed.
\end{proof}

By duality, the equality
$\mathcal{Z}(M^*)=\{E(M)-A\,:\,A\in\mathcal{Z}(M)\}$, and Lemmas
\ref{lem:modprcompdual} and \ref{lem:compdualconfig}, we get the
corollary below.

\begin{cor}\label{cor:congifrecon}
  For a matroid $M$, if, for all $A,B\in\mathcal{Z}(M)$, the pair
  $(A,B)$ is modular and $A\cup B\in\mathcal{Z}(M)$, then $M$ is
  configuration unique.
\end{cor}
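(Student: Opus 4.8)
The plan is to deduce this from Theorem~\ref{thm:congifrecon} by passing to the dual matroid, exactly as the sentence preceding the statement suggests. The starting point is Lemma~\ref{lem:compdualconfig}, which tells us that $M$ is configuration unique if and only if $M^*$ is configuration unique. Thus it suffices to show that $M^*$ satisfies the hypotheses of Theorem~\ref{thm:congifrecon}, namely that for all $A',B'\in\mathcal{Z}(M^*)$, the pair $(A',B')$ is modular in $M^*$ and $A'\cap B'\in\mathcal{Z}(M^*)$.

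To check this, I would take arbitrary $A',B'\in\mathcal{Z}(M^*)$ and use the identity $\mathcal{Z}(M^*)=\{E(M)-A\,:\,A\in\mathcal{Z}(M)\}$ to write $A'=E(M)-A$ and $B'=E(M)-B$ for some $A,B\in\mathcal{Z}(M)$. For the modularity condition, the hypothesis gives that $(A,B)$ is modular in $M$, and Lemma~\ref{lem:modprcompdual} then yields that $(E(M)-A,E(M)-B)=(A',B')$ is modular in $M^*$. For the intersection condition, De Morgan's law gives $A'\cap B'=E(M)-(A\cup B)$; since the hypothesis guarantees $A\cup B\in\mathcal{Z}(M)$, its complement $E(M)-(A\cup B)$ lies in $\mathcal{Z}(M^*)$, so $A'\cap B'\in\mathcal{Z}(M^*)$, as required.

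Having verified both hypotheses, Theorem~\ref{thm:congifrecon} applies to $M^*$ and shows that $M^*$ is configuration unique; by Lemma~\ref{lem:compdualconfig} this forces $M$ to be configuration unique, completing the argument. I do not expect any serious obstacle here: the entire content is that intersection and union interchange under complementation, while modularity and the property of being a cyclic flat are each suitably self-dual, so the two hypotheses imposed on $M$ translate precisely into the two hypotheses needed for $M^*$. The only point requiring a little care is to keep track of the direction of the correspondence, so that the \emph{union} hypothesis on $M$ matches the \emph{intersection} hypothesis required of $M^*$.
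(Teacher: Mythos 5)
Your proposal is correct and is exactly the argument the paper intends: the paper's one-sentence justification before the corollary invokes duality, the identity $\mathcal{Z}(M^*)=\{E(M)-A\,:\,A\in\mathcal{Z}(M)\}$, and Lemmas \ref{lem:modprcompdual} and \ref{lem:compdualconfig} to reduce to Theorem \ref{thm:congifrecon} applied to $M^*$, which is precisely what you carried out. Your write-up simply makes explicit the De Morgan step turning the union hypothesis on $M$ into the intersection hypothesis on $M^*$.
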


Neither Theorem \ref{thm:congifrecon} nor Corollary
\ref{cor:congifrecon} has Theorem \ref{thm:nomixingconfigunique} as a
corollary.

Another sufficient condition for configuration uniqueness is having
all elements of $M$ be in $2$-circuits since that implies that all
flats of $M$ are cyclic, so the configuration is an unlabeled copy of
the lattice of flats, with the size of each parallel class given.
That observation is used in the first of two constructions discussed
below that produce pairs of non-isomorphic matroids that are
configuration unique but have the same $\mathcal{G}$-invariant.

For the first construction, start with any two non-isomorphic matroids
$M$ and $N$ with $\mathcal{G}(M)=\mathcal{G}(N)$ and an integer
$k\geq 1$; for each $e\in E(M)$, let $X_e$ be a set of size $k$ that
is disjoint from $E(M)$ and satisfies $X_e\cap X_f=\emptyset$ when
$e\ne f$, and add the elements of $X_e$ parallel to $e$ to get a
matroid $M^k$; form $N^k$ similarly.  Thus, a flat of $h$ elements in
$M$ gives rise to a flat of $(k+1)h$ elements in $M^k$, and likewise
for $N^k$.  By what we just noted, both $M^k$ and $N^k$ are
configuration unique.  However, the formulation of the
$\mathcal{G}$-invariant using sizes of differences in flags of flats
shows that $\mathcal{G}(M^k)=\mathcal{G}(N^k)$.

The next construction, discussed after Theorem \ref{thm:coneconfigu},
will use the free $m$-cone, where $m$ is a positive integer.  For a
matroid $M$ with no loops, the \emph{free $m$-cone} of $M$, denoted
$Q_m(M)$, is formed by extending $M$ by adding a coloop, say $a$, and,
by taking iterated principal extensions, for each $e\in E(M)$, adding
$m$ points freely to the line spanned by $a$ and $e$.  When $M$ and
$m$ are understood, we shorten $Q_m(M)$ to $Q$. (How many elements are
on a line through $a$ is determined by the size of the corresponding
rank-$1$ flat of $M$; a rank-$1$ flat consisting of $h$ parallel
elements in $M$ gives rise to a line of $Q$ with $h(m+1)+1$ elements.)
That is one of the two views of free $m$-cones from \cite{freecone},
where this construction was introduced; the other approach specifies
the cyclic flats of $Q$ and their ranks.  The cyclic flats of $Q$ are
those of $M$ along with all unions of the form
$\bigcup_{x\in X}\cl_Q(a,x)$ as $X$ ranges over the nonempty flats of
$M$.

\begin{figure}
  \centering
\begin{tikzpicture}[scale=1]

  \filldraw (0,0) circle (2.5pt); %
  \filldraw (1,0) circle (2.5pt); %
  \filldraw (2,0.09) circle (2.5pt); %
  \filldraw (2,-0.09) circle (2.5pt); %
  \filldraw (3,0) circle (2.5pt); %
  \filldraw (0.75,1) circle (2.5pt); %
  \filldraw (1.2,0.8) circle (2.5pt); %
  \filldraw (1.8,0.8) circle (2.5pt); %
  \filldraw (2.25,1) circle (2.5pt); %
  \filldraw (1.7,1.2) circle (2.5pt); %
  \filldraw (1.5,2) circle (2.5pt); %
        
  \draw[thick](0,0)--(3,0);%
  \draw[thick](1.5,2)--(0,0);%
  \draw[thick](1.5,2)--(1,0);%
  \draw[thick](1.5,2)--(2,0);%
  \draw[thick](1.5,2)--(3,0);%

  \node at (1.5,-0.75) {$Q$};%
\end{tikzpicture}
\hspace{1cm}
\begin{tikzpicture}[scale=1]

  \filldraw (0,0) circle (2.5pt); %
  \filldraw (1,0) circle (2.5pt); %
  \filldraw (2,0.09) circle (2.5pt); %
  \filldraw (2,-0.09) circle (2.5pt); %
  \filldraw (3,0) circle (2.5pt); %
  \filldraw (0.75,1) circle (2.5pt); %
  \filldraw (1.15,0.65) circle (2.5pt); %
  \filldraw (1.75,1) circle (2.5pt); %
  \filldraw (1.62,1.5) circle (2.5pt); %
  \filldraw (2.5,0.6) circle (2.5pt); %
  \filldraw (1.5,2) circle (2.5pt); %
        
  \draw[thick](0,0)--(3,0);%
  \draw[thick](1.5,2)--(0,0);%
  \draw[thick](1.5,2)--(1,0);%
  \draw[thick](1.5,2)--(2,0);%
  \draw[thick](1.75,1)--(0,0);%
    \node at (1.5,-0.75) {$N$};%
\end{tikzpicture}
\caption{The matroid $Q$ is the free $1$-cone of a parallel extension
  of $U_{2,4}$, while $N$ is not a free cone but has the same
  configuration.}
  \label{fig:needr>2}
\end{figure}
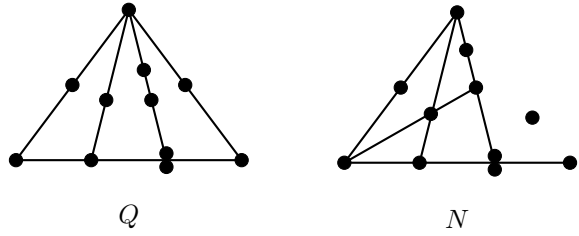

The example in Figure \ref{fig:needr>2} shows why we must assume that
$r(M)>2$ in the next result.

\begin{thm}\label{thm:coneconfigu}
  The free $m$-cone $Q$ of a matroid $M$ with no loops and with
  $r(M)>2$ is configuration unique.
\end{thm}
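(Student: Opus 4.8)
The plan is to show that every matroid $N$ with the same configuration as $Q=Q_m(M)$ is isomorphic to $Q$, by reconstructing the cone structure of $Q$ inside $N$. Write $r=r(M)\ge 3$ and fix a size- and rank-preserving lattice isomorphism $\Phi:\mathcal{Z}(Q)\to\mathcal{Z}(N)$. First I would analyze $\mathcal{Z}(Q)$ using the description of the cyclic flats of a free $m$-cone given above. The cyclic flats of $Q$ split into two kinds: the cyclic flats of $M$ (which avoid the apex $a$ and form a down-set), and the sets $\cl_Q(X\cup a)$ for nonempty flats $X$ of $M$ (which contain $a$ and form a filter $\mathcal{U}$). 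The assignment $X\mapsto\cl_Q(X\cup a)$ is a lattice isomorphism from the lattice of nonempty flats of $M$ onto $\mathcal{U}$, and it sends $X$ to a cyclic flat of size $1+(m+1)|X|$ and rank $r_M(X)+1$. Consequently, recovering $\mathcal{U}$ together with its size and rank data recovers the geometric lattice of flats of $M$ with the size of every flat recorded, and hence recovers $M$, and the integer $m$, up to isomorphism; so the configuration determines $Q$ up to isomorphism. The content of the theorem is therefore to realize this reconstruction inside the given $N$.

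The engine of the reconstruction is the \emph{pencil of lines through the apex}. The minimal members of $\mathcal{U}$ are exactly the lines $\cl_Q(P\cup a)$, one for each rank-$1$ flat $P$ of $M$; these are rank-$2$ cyclic flats, they pairwise meet set-theoretically in $\{a\}$, and their union is all of $E(Q)$. I would first locate the apex in $N$: for two such lines $L,L'$, submodularity in $N$ forces $r_N(\Phi(L)\cap\Phi(L'))\le 1$, while a size comparison using the image of the join $\Phi(L\vee L')$ shows these intersections are nonempty, so each pairwise intersection of line-images is a single rank-$1$ flat. The key step is then to show that all of these rank-$1$ flats coincide in one common point $a_N$; here I would use that $r>2$ provides three lines whose underlying rank-$1$ flats of $M$ are independent, so that the associated rank-$3$ and rank-$4$ members of $\mathcal{U}$ impose enough incidences to pin the pairwise intersections together, and then propagate this to the whole pencil. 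Setting $\phi(a)=a_N$, each line $L$ maps to a rank-$2$ flat $\Phi(L)$ of $N$ of equal size through $a_N$; since the filter incidences record, for every point, which line it lies on and whether it lies in any cyclic flat of $M$ (equivalently, whether it is an original point or one of the freely added points), I can match the non-apex points of $Q$ to those of $N$ line by line, respecting all cyclic-flat memberships. Finally I would verify, using the isomorphism criterion stated after Theorem \ref{thm:axioms} and Equation (\ref{eq:cftorank}), that the resulting bijection $\phi:E(Q)\to E(N)$ carries each cyclic flat of $Q$ (of either kind) to its $\Phi$-image with the correct rank, so $\phi$ is an isomorphism; by Lemma \ref{lem:configdirsum} it suffices to treat $Q$ connected throughout.

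The main obstacle is exactly the control of set-intersections in $N$, since the configuration records only the abstract lattice with sizes and ranks, not the actual intersections; indeed the filter $\mathcal{U}$ contains non-modular pairs, which is precisely the situation that Theorem \ref{thm:twofilters} exploits to fatten an intersection and produce a non-isomorphic matroid with the same configuration, and it is why neither Theorem \ref{thm:congifrecon} nor Corollary \ref{cor:congifrecon} applies directly. The hard part is thus to prove that no such fattening is possible here---most sharply, that the line-images in $N$ form a genuine pencil through a single apex rather than splitting apart. This is where the hypothesis $r(M)>2$ is essential: a single free cone of rank $3$ can be reorganized, as Figure \ref{fig:needr>2} shows, but once $r(M)>2$ the many overlapping rank-$3$ sub-cones $\cl_Q(Y\cup a)$ (with $Y$ a rank-$2$ flat of $M$), all sharing the apex, over-determine its location and forbid the reorganization. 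A secondary technical point, which I would settle as a preliminary lemma, is to characterize the filter $\mathcal{U}$ (equivalently, the apex-containing cyclic flats) purely in terms of the abstract configuration, so that $\Phi$ is guaranteed to carry $\mathcal{U}$ onto the corresponding filter in $N$.
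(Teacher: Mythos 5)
Your overall strategy is the paper's: single out the pencil of apex lines among the cyclic flats, show that their images under $\Phi$ all pass through one point of $E(N)$ (the step where $r(M)>2$ enters), then build the bijection line by line, separating points of $E(M)$ from freely added points, and verify. The genuine gap is in your key step for locating the apex. You claim that for \emph{every} pair of apex lines $L,L'$ the intersection $\Phi(L)\cap\Phi(L')$ is nonempty, justified by ``a size comparison using the image of the join $\Phi(L\vee L')$.'' That comparison only yields $|\Phi(L)\cap\Phi(L')|\geq |L|+|L'|-|L\vee L'|$, and the right side is positive only when $|L\vee L'|=|L|+|L'|-1$, i.e., when the line of $M$ spanned by the two underlying rank-$1$ flats contains no third rank-$1$ flat. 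If $M$ has a nontrivial line, the bound is vacuous: for example, if $\{x,y,z\}$ is a three-point line of $M$ and $m=1$, then $|L_x|=|L_y|=3$ while $|L_x\vee L_y|=7$, so the configuration data are perfectly consistent with $\Phi(L_x)$ and $\Phi(L_y)$ being disjoint inside the $7$-element flat $\Phi(L_x\vee L_y)$; no pairwise size count can rule this out. Since your ``pin the intersections together and propagate'' step presupposes that all pairwise intersections are already nonempty, the sketch collapses exactly in the cases where fattened intersections are the danger. The paper avoids this with a \emph{global} count: the apex lines cover $E(Q)$ and pairwise meet only in $\{a\}$, so $\sum_{L\in\mathcal{L}}|L|=|E(Q)|+|\mathcal{L}|-1>|E(N)|$, whence at least one pair $\Phi(L_1),\Phi(L_2)$ must meet, say in $a'$; the point $a'$ is then propagated to every other line $\Phi(L')$ by intersecting the two rank-$3$ flats $\Phi(L')\vee\Phi(L_1)$ and $\Phi(L')\vee\Phi(L_2)$, choosing $L'$ outside the plane $\Phi(L_1)\vee\Phi(L_2)$ (possible precisely because $r(M)>2$). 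Replacing your pairwise claim with this one-pair-plus-propagation argument repairs the outline.

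Two smaller points. First, your ``secondary technical point''---configurationally characterizing the filter $\mathcal{U}$ so that ``$\Phi$ carries $\mathcal{U}$ onto the corresponding filter in $N$''---is misframed: since $N$ is not known to be a cone, there is no corresponding filter a priori, and none is needed; one works directly with the image $\Phi(\mathcal{L})$, whose relevant lattice, rank, and size properties transfer automatically through $\Phi$. (The paper does quote such a characterization from \cite{freecone}, but its proof really uses only the transferred properties.) Relatedly, your intersections should be shown to be singletons, not merely rank-$1$ flats; this follows because the meet of two apex lines in $\mathcal{Z}(Q)$ is $\emptyset$, so $\Phi(L)\cap\Phi(L')$ is independent in $N$. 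Second, your parenthetical ``whether it lies in any cyclic flat of $M$'' does not distinguish original from added points on a line through a coloop of $M$: such an original point lies in no cyclic flat of $M$, just like the added points. This is harmless---on those lines all non-apex points are interchangeable and any bijection works---but it must be treated as a separate case, as the paper does.
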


\begin{proof}
  We start from a key step that is established in the proof of
  \cite[Theorem 3.8]{freecone}: among all sets $\mathcal{C}$ of lines
  (i.e., rank-$2$ flats) in $\mathcal{Z}(Q)$ for which
  \begin{itemize}
  \item[(L1)] for each $L\in\mathcal{C}$, at most one proper, nonempty
    subset of $L$ is in $\mathcal{Z}(Q)$,
  \item[(L2)] $\bigvee\limits_{L\in \mathcal{C}}L=E(Q)$, and
  \item[(L3)] if $L, L'\in \mathcal{C}$ with $L\neq L'$, then
    $r(L\vee L')=3$,
  \end{itemize}
  there is a unique $\mathcal{C}$ for which $|\mathcal{C}|$ is
  maximal, and that $\mathcal{C}$, which we henceforth call
  $\mathcal{L}$, is the set of lines of $Q$ that contain $a$, so
  $|\mathcal{L}|$ is the number of rank-$1$ flats of $M$.  Note that
  whether a set $\mathcal{C}$ of lines in $\mathcal{Z}(Q)$ satisfies
  properties (L1)--(L3) can be deduced from the configuration alone,
  so $\mathcal{L}$ can be identified in the configuration.

  There is a maximal cyclic flat $F$ of $Q$ that contains no lines in
  $\mathcal{L}$; indeed, $F$ is the largest cyclic flat in
  $\mathcal{Z}(M)$.  This flat $F$ contains all rank-$1$ cyclic flats
  of $Q$, and for any $L\in\mathcal{L}$, we have
  $r_Q(F\join L)=r_Q(F)+1$ if and only if $F\cap L\ne\emptyset$.

  Assume that $N$ has the same configuration as $Q$. Thus, there is a
  lattice isomorphism $\Phi:\mathcal{Z}(Q)\to \mathcal{Z}(N)$ that
  preserves size and matroid rank.  As in several other arguments, it
  suffices to show that $\Phi$ is induced by a bijection
  $\phi:E(Q)\to E(N)$.

  We first show that one element of $E(N)$ is in all lines in
  $\Phi(\mathcal{L})$.  To see this, first note that the meet in
  $\mathcal{Z}(Q)$ of any two lines in $\mathcal{L}$ is $\emptyset$,
  so the same is true of the meet in $\mathcal{Z}(N)$ of any two lines
  in $\Phi(\mathcal{L})$.  Thus, the intersection of any two lines of
  $\Phi(\mathcal{L})$ is a singleton or empty.  Also, if all lines of
  $\Phi(\mathcal{L})$ were disjoint, then we would have
  $$|E(N)|\geq\sum_{L'\in\Phi(\mathcal{L})}|L'|
  =\sum_{L\in\mathcal{L}}|L|>|E(Q)|,$$ contrary to $N$ and $Q$ having
  the same configuration.  So assume that $L_1\cap L_2=\{a'\}$ for
  some $L_1,L_2\in\Phi(\mathcal{L})$.  Since $r(M)>2$, there are lines
  $L' \in\Phi(\mathcal{L})$ not in the plane $L_1\join L_2$.  For any
  such line $L'$, the planes $L'\join L_1$ and $L'\join L_2$ contain
  $a'$ and intersect in the line $L'$, so $a'\in L'$.  Applying the
  same argument using $L_1$ and $L'$ and any other line in
  $L_1\join L_2$ that is in $\Phi(\mathcal{L})$ shows that all such
  lines contain $a'$, so all lines in $\Phi(\mathcal{L})$ contain
  $a'$.

  Since all lines in $\Phi(\mathcal{L})$ contain $a'$ and $N$ has the
  same configuration as $Q$, counting shows that $E(N)$ is the
  disjoint union of the sets $\{a'\}$ and $L-\{a'\}$ as $L$ ranges
  over all lines in $\Phi(\mathcal{L})$.  Counting also shows that if
  $L\in \Phi(\mathcal{L})$ and $r_N(\Phi(F)\join L)=r_N(\Phi(F))+1$,
  then $L\cap \Phi(F)$ is a rank-$1$ flat of $N|\Phi(F)$ (which may or
  may not be cyclic).  With this, we can define $\phi:E(Q)\to E(N)$,
  namely,
  \begin{itemize}
  \item $\phi(a)=a'$;
  \item for each rank-$1$ flat $A$ of $M$ where $A$ does not consist
    of a coloop of $M$, let $L=\cl_Q(A\cup a)$, so $L\in\mathcal{L}$,
    and let $A'=\Phi(L)\cap \Phi(F)$; then let $\phi$ map $A$ onto
    $A'$ bijectively, and map $L-(a\cup F)$ onto
    $\Phi(L)-(a'\cup \Phi(F))$ bijectively;
  \item finally, for a line $L$ in $\mathcal{L}$ that contains no
    element of $F$, so $L\cap E(M)$ is a coloop of $M$, let $\phi$ map
    $L-a$ bijectively onto $\Phi(L)-a'$.
  \end{itemize}
  It follows that $\Phi(X)=\phi(X)$ for any cyclic flat of $Q$ that
  either contains $a$ or has rank $1$.  Also, $\Phi(F)=\phi(F)$.  Now
  consider any other cyclic flat $X$ of $M$ with $r_M(X)>1$.  The
  closure $\cl_Q(X\cup a)$ is a cyclic flat of $Q$ of rank $r_M(X)+1$,
  and $X=\cl_Q(X\cup a)\cap F$. Since $\Phi(\cl_Q(X\cup a))$ covers
  $\Phi(X)$ in both the lattice of flats of $N$ and the lattice of
  cyclic flats of $N$, we must have
  $\Phi(X)=\Phi(\cl_Q(X\cup a))\cap \Phi(F)$.  Thus,  as needed,
  $$\Phi(X)=\Phi(\cl_Q(X\cup a))\cap \Phi(F) =\phi(\cl_Q(X\cup a))\cap
  \phi(F)=\phi(X).\qedhere$$
\end{proof}

\begin{figure}
  \centering
\begin{tikzpicture}[scale=1]

  \filldraw (0,0) node[left] {\footnotesize $w$} circle (2.5pt); %
  \filldraw (2,0) node[right] {\footnotesize $x$} circle (2.5pt); %
  \filldraw (0.1,0.8) node[left] {\footnotesize $z$} circle (2.5pt); %
  \filldraw (1.9,0.8) node[right] {\footnotesize $y$} circle
  (2.5pt); %
  \filldraw (0.3,0.6) circle (2.5pt); %
  \filldraw (0.5,1) circle (2.5pt); %
  \filldraw (1.7,0.6) circle (2.5pt); %
  \filldraw (1.5,1) circle (2.5pt); %
  \filldraw (0.6,1.5) circle (2.5pt); %
  \filldraw (0.4,1.2) circle (2.5pt); %
  \filldraw (1.4,1.5) circle (2.5pt); %
  \filldraw (1.6,1.2) circle (2.5pt); %
   
  \draw[thick](0.1,0.8)--(0,0)--(2,0)--(1.9,0.8);%
  \draw[thick](1,2)--(0,0);%
  \draw[thick](1,2)--(2,0);%
  \draw[thick,dashed](0.1,0.8)--(1.9,0.8);%
  \draw[thick](0.1,0.8)--(1,2)--(1.9,0.8);%

  \node at (1,-0.75) {$Q_2(U_{3,4})\del a$};%
\end{tikzpicture}
\hspace{0.5cm}
  \begin{tikzpicture}[scale=1.3]
    \node[inner sep = 0.3mm] (em) at (2.2,-0.2) {\footnotesize
      $\emptyset$};%

    \node[inner sep = 0.3mm] (1a) at (0,0.5) {\footnotesize $W$};%
    \node[inner sep = 0.3mm] (1b) at (1,0.5) {\footnotesize $X$};%
        \node[inner sep = 0.3mm] (1c) at (2,0.5) {\footnotesize $Y$};%
    \node[inner sep = 0.3mm] (1d) at (3,0.5) {\footnotesize $Z$};%

    \node[inner sep = 0.3mm] (2a) at (-1,1.2)  {\footnotesize
      $W\cup X$};%
    \node[inner sep = 0.3mm] (2b) at (-0.1,1.2)  {\footnotesize
      $W\cup Y$};%
    \node[inner sep = 0.3mm] (2c) at (0.8,1.2)  {\footnotesize
      $W\cup Z$};%
        \node[inner sep = 0.3mm] (2d) at (1.7,1.2)  {\footnotesize
      $X\cup Y$};%
    \node[inner sep = 0.3mm] (2e) at (2.6,1.2)  {\footnotesize
      $X\cup Z$};%
    \node[inner sep = 0.3mm] (2f) at (3.5,1.2)  {\footnotesize
      $Y\cup Z$};%
        \node[inner sep = 0.3mm] (2g) at (4.5,1.2)  {\footnotesize
          $\{w,x,y,z\}$};%
        
    \node[inner sep = 0.3mm] (3) at (2.2,1.9)  {\footnotesize
      $W\cup X\cup Y\cup Z$};%

    \foreach \from/\to in {em/1a,em/1b,em/1c,em/1d,em/2g,
      1a/2a,1b/2a,1a/2b,1b/2d,1a/2c,1b/2e,1c/2b,1c/2d,1c/2f,
      1d/2c,1d/2e,1d/2f,
      2a/3,2b/3,2c/3,2d/3,2e/3,2f/3,2g/3} \draw(\from)--(\to);%

         \node at (2,-0.7) {$\mathcal{Z}(Q_2(U_{3,4}\del a)$};%
  \end{tikzpicture}
  \caption{The tipless $2$-cone of $U_{3,4}$ and its lattice of cyclic
    flats.  The set $W$ is the cyclic line that contains $w$, and
    likewise for $X$, $Y$, and $Z$.  }
  \label{fig:tipless}
\end{figure}
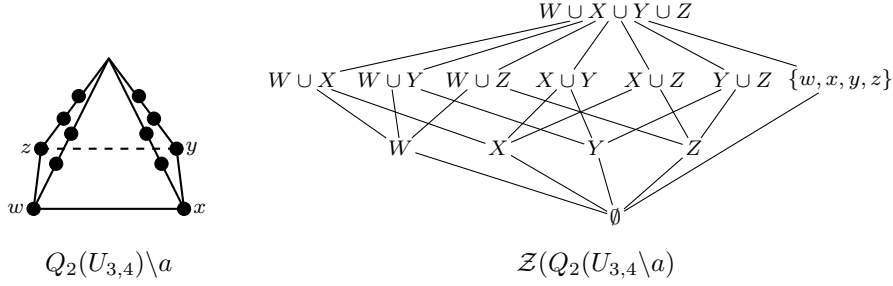

In \cite{freecone}, it is shown that if matroids $M$ and $N$ have the
same $\mathcal{G}$-invariant but are not isomorphic, then the free
$m$-cones $Q_m(M)$ and $Q_m(N)$ have the same $\mathcal{G}$-invariant
but different configurations.  Theorem \ref{thm:coneconfigu}
strengthens the conclusion: via free $m$-cones, we get non-isomorphic
configuration-unique matroids that have the same
$\mathcal{G}$-invariant.  Several variations on free $m$-cones are
also treated in \cite{freecone}.  The proof of Theorem
\ref{thm:coneconfigu} easily adapts to show that if $m>1$ and
$r(M)>2$, then the baseless free $m$-cone $Q_m(M)\del E(M)$ is
configuration unique.  However, the tipless free $m$-cone
$Q_m(M)\del a$ and the tipless/baseless free $m$-cone
$Q_m(M)\del (E(M)\cup a)$ need not be configuration unique.  We
illustrate this with $Q_2(U_{3,4})\del a$, which, along with its
lattice of cyclic flats, is shown in Figure \ref{fig:tipless}.  To get
a non-isomorphic matroid $N$ that has the same configuration as
$Q_2(U_{3,4})\del a$, consider the following sets and ranks:
\begin{itemize}
\item $\emptyset$ has rank $0$;
\item the sets $\{w,w',a\}$, $\{x,x',a\}$, $\{y,y',a\}$, and
  $\{z,z',a\}$ have rank $2$;
\item the sets $\{w,w',x,x',a,b\}$, $\{w,w',y,y',a,c\}$,
  $\{w,w',z,z',a,d\}$, $\{x,x',y,y',a,d\}$, $\{x,x',z,z',a,c\}$,
  $\{y,y',z,z',a,b\}$, and $\{w,x,y,z\}$ have rank $3$;
\item the set $\{w,w',x,x',y,y',z,z',a,b,c,d\}$ has rank $4$.
\end{itemize}
It is easy to check that properties (Z0)--(Z3) in Theorem
\ref{thm:axioms} hold, so this data defines a matroid $N$ which is
clearly not isomorphic to $Q_2(U_{3,4})\del a$ but has the same
configuration.

\section{
Which lattices come from non-configuration-unique matroids?
%
%  Tutte-equivalent transversal matroids with $\mathcal{Z}(M)\cong L$
}

As noted earlier, matroids for which the lattice of cyclic flats is a
chain (i.e., nested matroids) are Tutte unique \cite{AnnaThesis}.  In
this section, we prove the following theorem, which shows that any
lattice that is not a chain is isomorphic to the lattice of cyclic
flats of a matroid that is not even configuration unique.

\begin{thm}\label{thm:latticeresult}
  Let $L$ be a lattice that is not a chain.  There are pairs of
  non-isomorphic transversal matroids that have the same configuration
  and have their lattices of cyclic flats isomorphic to $L$.
\end{thm}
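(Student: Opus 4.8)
The plan is to construct, for the given lattice $L$, a single transversal matroid $M$ with $\mathcal{Z}(M)\cong L$ to which Theorem~\ref{thm:twofilters} applies, and then to show that the matroid $M'$ it produces is again transversal; since $M$ and $M'$ have the same configuration, are non-isomorphic, and both have lattice of cyclic flats isomorphic to $L$, they form the desired pair.

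To build $M$, let $J(L)$ be the set of join-irreducibles of $L$ and fix a block $B_j$ of $t=|L|$ new elements for each $j\in J(L)$, the blocks being pairwise disjoint. For $z\in L$ set $Z_z=\bigcup_{j\in J(L),\, j\le z} B_j$ and $\rho(z)=|L|-|L_{\ge z}|$, where $L_{\ge z}=\{u\in L: u\ge z\}$. Because $z=\bigvee\{j\in J(L): j\le z\}$, the map $z\mapsto Z_z$ is an order isomorphism of $L$ onto $\{Z_z\}$ satisfying $Z_z\cap Z_w=Z_{z\wedge w}$, so meets are intersections. I would then verify axioms (Z0)--(Z3) of Theorem~\ref{thm:axioms}: (Z0) holds since $(\{Z_z\},\subseteq)\cong L$; (Z1) holds since $Z_{0_L}=\emptyset$ and $\rho(0_L)=0$; (Z2) holds because for $z<w$ one has $0<\rho(w)-\rho(z)\le |L|-1<t\le |Z_w|-|Z_z|$; and (Z3) reduces, since $Z_z\cap Z_w=Z_{z\wedge w}$ kills the excess term, to the submodularity $\rho(z\vee w)+\rho(z\wedge w)\le\rho(z)+\rho(w)$, which follows from the supermodularity of $z\mapsto|L_{\ge z}|$ (note $L_{\ge z}\cap L_{\ge w}=L_{\ge z\vee w}$ while $L_{\ge z}\cup L_{\ge w}\subseteq L_{\ge z\wedge w}$). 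Thus these data define a matroid $M$ with $\mathcal{Z}(M)\cong L$.

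The heart of the argument is that this $\rho$ makes $M$ transversal with slack. For an antichain $\{Z_{z_1},\dots,Z_{z_p}\}$, the meets-are-intersections property gives $r(\cap\mathcal{F})=\rho(\bigwedge_i z_i)$ and $r(\cup\mathcal{X})=\rho(\bigvee_{i\in S}z_i)$ for $\mathcal{X}=\{Z_{z_i}:i\in S\}$, so the Mason--Ingleton inequality of Theorem~\ref{thm:mi} becomes $\rho(\bigwedge_i z_i)\le\sum_{\emptyset\ne S}(-1)^{|S|+1}\rho(\bigvee_{i\in S}z_i)$. Substituting $\rho=|L|-|L_{\ge\cdot}|$ and expanding $|L_{\ge u}|=\sum_v[v\ge u]$, the inclusion--exclusion over $S$ collapses (using $1-\prod_i(1-[v\ge z_i])=[v\ge z_i\text{ for some }i]$) to the trivially true statement $|\bigcup_i L_{\ge z_i}|\le |L_{\ge\bigwedge_i z_i}|$, whose slack $|L_{\ge\bigwedge_i z_i}\setminus\bigcup_i L_{\ge z_i}|$ is at least $1$, witnessed by $\bigwedge_i z_i$ itself. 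Hence $M$ is transversal, every pair of incomparable cyclic flats is non-modular, and \emph{every} Mason--Ingleton inequality has slack at least one.

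Finally I would apply Theorem~\ref{thm:twofilters}. As $L$ is not a chain, pick incomparable $p,q\in L$ and elements $x\in Z_p-Z_q$ and $y\in Z_q-Z_p$ (these are nonempty since $Z_p\cap Z_q=Z_{p\wedge q}\subsetneq Z_p,Z_q$); then $Z_p\in\mathcal{Z}_x-\mathcal{Z}_y$ and $Z_q\in\mathcal{Z}_y-\mathcal{Z}_x$, and any $X\in\mathcal{Z}_x-\mathcal{Z}_y$ and $Y\in\mathcal{Z}_y-\mathcal{Z}_x$ are incomparable (membership of $x$ and $y$ rules out $X\subseteq Y$ and $Y\subseteq X$), hence non-modular. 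So Theorem~\ref{thm:twofilters} produces $M'$ with the same configuration as $M$, not isomorphic to $M$, and with $\mathcal{Z}(M')\cong L$. It remains to check that $M'$ is transversal, and here the slack is used: for any antichain $\mathcal{F}'$ of $\mathcal{Z}(M')$, corresponding to an antichain $\mathcal{F}$ of $\mathcal{Z}(M)$, the right-hand side of Mason--Ingleton is unchanged because joins and their ranks are preserved, while comparing coloop counts (meet ranks and meet sizes are preserved) shows $r_{M'}(\cap\mathcal{F}')-r_M(\cap\mathcal{F})=|\cap\mathcal{F}'|-|\cap\mathcal{F}|\le 1$, since $\cap\mathcal{F}'$ and $\cap\mathcal{F}$ can differ only through containing the single element $x$; the extra $+1$ is absorbed by the slack of at least one already present in $M$, so each inequality persists. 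I expect this last transversality bookkeeping to be the main obstacle, as it requires the size-at-least-three Mason--Ingleton inequalities of $M$ to have uniform slack, which is exactly the property the chosen rank function $\rho(z)=|L|-|L_{\ge z}|$ guarantees.
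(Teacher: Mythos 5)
Your proposal is correct, and although it follows the same broad strategy as the paper (construct a transversal $M$ with $\mathcal{Z}(M)\cong L$ whose meets of cyclic flats are intersections, apply Theorem~\ref{thm:twofilters}, then verify transversality of $M'$ through Theorem~\ref{thm:mi}), the execution differs in genuinely useful ways. The paper takes $M$ from the simplex construction of \cite{cycflats}, with points $S_z$ placed freely on the face spanned by $V_z=\{y:y\not\geq z\}$, so transversality of $M$ is automatic from the presentation; your $M$ is built abstractly via Theorem~\ref{thm:axioms} and you prove its transversality by the inclusion--exclusion collapse. (Amusingly, your rank function $\rho(z)=|L|-|L_{\geq z}|$ is exactly the paper's $r(F_z)=|V_z|$; the two constructions differ only in the sizes of the flats.) The deeper divergence is in handling $M'$: the paper first reduces to lattices in which $\hat{1}$ covers two elements $z,w$, then picks $e\in S_z$, $f\in S_w$ so that $\mathcal{Z}_e$ and $\mathcal{Z}_f$ are two-element filters; only one cyclic flat is altered and any antichain of size at least three misses $e,f$ in its intersection, leaving the left side of Inequality~(\ref{eq:mi}) unchanged. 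You allow arbitrary incomparable $p,q$ and arbitrary $x,y$, so many flats may be altered, and compensate with the uniform slack of at least one in every size-$\geq 2$ Mason--Ingleton inequality (witnessed by $\bigwedge_i z_i$), absorbing the at-most-one-element growth of the intersection; your equality $r_{M'}(\cap\mathcal{F}')-r_M(\cap\mathcal{F})=|\cap\mathcal{F}'|-|\cap\mathcal{F}|$ does hold, since the meet of $\mathcal{F}'$ in $\mathcal{Z}(M')$ is the isomorphic image of $Z_{\bigwedge_i z_i}$ and the excess elements are coloops of the restriction, though only the inequality $\leq$, immediate from Equation~(\ref{eq:cftorank}) applied with that meet, is needed. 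Net effect: the paper's minimal-filter choice makes the final check nearly trivial but needs the covering reduction and the geometric presentation; your slack argument is more robust, tolerating any choice of $x$ and $y$ and dispensing with the reduction, at the cost of verifying (Z0)--(Z3) and Mason--Ingleton for $M$ by hand.
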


We will use one of the two constructions that we gave
in~\cite{cycflats} that, for a lattice $L$, produce transversal
matroids for which the lattices of cyclic flats are isomorphic to $L$.
We start by recalling the construction, which we illustrate in Figure
\ref{fig:latticeconstruction}.

Let $B=L-\{\hat{1}\}$ where $\hat{1}$ is the greatest element of $L$.
For each $z\in L$, let $V_z$ be the set
$\{y\in L \,:\, y\not \geq z\}$.  Observe that $V_x\subseteq V_z$ if
and only if $x\leq z$.  For each $z\in L$, let $S_z$ be a set of
$|V_z|+1$ elements, where $S_z\cap S_y=\emptyset$ whenever $z\ne y$,
and $S_z\cap B=\emptyset$.  Consider a $|B|$-vertex simplex $\Delta$.
Put one element of $B$ at each vertex of $\Delta$.  For each $z\in L$
put the points in $S_z$ freely in the face of $\Delta$ that is spanned
by $V_z$, and then delete $B$.  The resulting transversal matroid has
as cyclic flats the sets $F_z=\cup_{y\leq z} S_y$, for each $z\in L$;
also, $F_z\cap F_x=F_{z\meet x}$ for all $z,x\in L$, so the meet in
the lattice of cyclic flats is the intersection (as in the lattice of
all flats).  To give another view, the presentation consists of the
sets $A_y=\cup_{z\not\leq y}S_z$ for each $y\in B$.

\begin{figure}
  \centering
  \begin{tikzpicture}[scale=1]
    \node[inner sep = 0.3mm] (em) at (3.5,-0.5) {$\hat{0}$};%
    \node[inner sep = 0.3mm] (a) at (2.75,0.5) {$w$};%
    \node[inner sep = 0.3mm] (b) at (4.25,0.5) {$z$};%
    \node[inner sep = 0.3mm] (t) at (3.5,1.5) {$\hat{1}$};%

    \foreach \from/\to in {em/a,em/b,a/t,b/t} \draw(\from)--(\to);%

    \node at (3.5,-1.2) {$L$};%
        
    \draw[thick](5.5,-0.6)--(8.5,-0.6) --(7,1.7)--(5.5,-0.6);%
    \filldraw (5.95,0.1) node {} circle (2.5pt); %
    \filldraw (8.05,0.1) node {} circle (2.5pt);%
    \filldraw (6.3,0.6) node {} circle (2.5pt);%
    \filldraw (7.7,0.6) node {} circle (2.5pt);%
    \filldraw (6.6,1.1) node {} circle (2.5pt);%
    \filldraw (7.4,1.1) node {} circle (2.5pt);%
    \filldraw (7.1,0.3) node {} circle (2.5pt);%
    \filldraw (6.9,-0.3) node {} circle (2.5pt); %
    \filldraw (7.5,0) node {} circle (2.5pt); %
    \filldraw (6.5,0) node {} circle (2.5pt); %

    \node at (5.35,-0.8) {{\color{black!60}$w$}};%
    \node at (8.65,-0.8) {{\color{black!60}$z$}};%
    \node at (7,1.95) {{\color{black!60}$\hat{0}$}};%
    \node at (6.3,1.1) {$e$};%
    \node at (7.7,1.1) {$f$};%
    \node at (7,-1.2) {$M$};%

    \draw[thick](10,-0.6)--(13,-0.6) --(11.5,1.7)--(10,-0.6);%
    \filldraw (10.45,0.1) node {} circle (2.5pt); %
    \filldraw (12.55,0.1) node {} circle (2.5pt);%
    \filldraw (10.8,0.6) node {} circle (2.5pt);%
    \filldraw (12.2,0.6) node {} circle (2.5pt);%
    \filldraw (11.3,0.9) node {} circle (2.5pt);%
    \filldraw (11.5,1.7) node {} circle (2.5pt);%
    \filldraw (11.6,0.3) node {} circle (2.5pt);%
    \filldraw (11.4,-0.3) node {} circle (2.5pt); %
    \filldraw (12,0) node {} circle (2.5pt); %
    \filldraw (11,0) node {} circle (2.5pt); %

    \node at (11.5,1.95) {$e$};%
    \node at (11.5,0.9) {$f$};%

    \node at (11.5,-1.2) {$M'$};%    
  \end{tikzpicture}
  \caption{In this example of the construction used to prove Theorem
    \ref{thm:latticeresult}, we have $V_{\hat{0}}=\emptyset$,
    $V_w=\{\hat{0},z\}$, $V_z=\{\hat{0},w\}$, and
    $V_{\hat{1}}=\{\hat{0},w,z\}$.  The three points in $S_w$ are on
    the edge of the simplex labeled by the elements of $V_w$, namely,
    $\hat{0}$ and $z$.  We have omitted the loop that $S_\emptyset$
    contributes.  We get $M'$ by having $e$ bump $f$ in $M$.}
  \label{fig:latticeconstruction}
\end{figure}
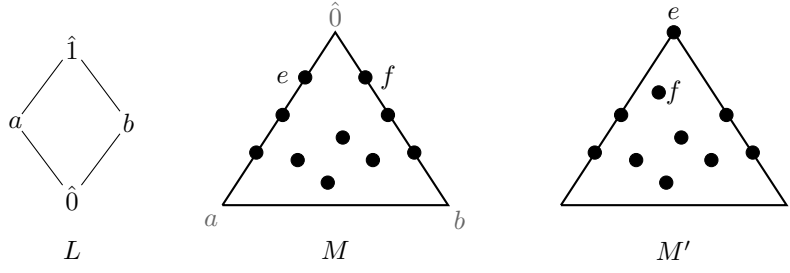

\begin{proof}[Proof of Theorem \ref{thm:latticeresult}]
  Observe that the greatest cyclic flat of a matroid $M$ without
  coloops covers exactly one cyclic flat, say $X$, in $\mathcal{Z}(M)$
  if and only if we obtain $M$ from $M|X$ by adding $r(M)-r(X)$
  elements of $E(M)-X$ as coloops and then adding the remaining
  elements of $E(M)-X$ by free extension.  It follows that if we prove
  the result for lattices where $\hat{1}$ covers at least two
  elements, then it holds for all lattices that are not chains.  So we
  assume that the greatest element $\hat{1}$ of $L$ covers at least
  two elements of $L$, say $z$ and $w$.
  
  Let $M$ be the matroid constructed in the paragraph before the
  proof.  Let $r=|B|$.  Thus, $|V_z|=|V_w|=r-1$, so
  $r_M(F_z)= r_M(F_w)=r-1$. Now $r_M(F_{z\meet w}) \leq r-3$ since
  $z$, $w$, and $z\meet w$ are not in $V_{z\meet w}$.  Since
  $F_z\cap F_w=F_{z\meet w}$, the pair $(F_z, F_w)$ is not modular.
  For any $e\in S_z$ and $f\in S_w$, we have, in the notation of
  Theorem \ref{thm:twofilters}, $\mathcal{Z}_e=\{F_z,E(M)\}$ and
  $\mathcal{Z}_f=\{F_w,E(M)\}$, so that theorem applies; thus, $e$ can
  bump $f$ in $M$ to obtain a matroid $M'$ that is not isomorphic to
  $M$ but has the same configuration as $M$.

  We show that $M'$ is transversal by applying Theorem \ref{thm:mi}
  and the remarks that follow it.  Let $\mathcal{F}'$ be an antichain
  of cyclic flats of $M'$ with $|\mathcal{F}'|\geq 3$.  Since
  $|\mathcal{F}'|\geq 3$, the antichain $\mathcal{F}'$ contains a
  cyclic flat of $M'$ other than $E(M')$, $F_z$, and $(F_w-f)\cup e$.
  Thus, since $\mathcal{Z}_e=\{F_z,E(M)\}$ and
  $\mathcal{Z}_f=\{F_w,E(M)\}$, we have $e,f\not\in \cap\mathcal{F}'$.
  Let $\mathcal{F}$ be the corresponding antichain in $\mathcal{Z}(M)$
  (so, replace $(F_w-f)\cup e$ by $F_w$ if it is in $\mathcal{F}'$;
  otherwise $\mathcal{F}=\mathcal{F}'$).  Since
  $e,f\not\in \cap\mathcal{F}'$, we have
  $\cap\mathcal{F}'=\cap\mathcal{F}$. Also,
  $r_{M'}(\cap\mathcal{F}')=r_M(\cap\mathcal{F})$ by Equation
  (\ref{eq:cftorank}) since, with $e,f\not \in \cap\mathcal{F}'$, the
  cyclic flats $F=(F_w-f)\cup e$ and $F=F_w$ in which $\mathcal{Z}(M)$
  and $\mathcal{Z}(M')$ differ yield the same sum
  $r(F)+|(\cap\mathcal{F}')-F|$.  Also, the right side of Inequality
  (\ref{eq:mi}) depends only on the configuration, which $M$ and $M'$
  share.  Thus, Inequality (\ref{eq:mi}) holds for $\mathcal{F}'$ in
  $M'$ since it holds for $\mathcal{F}$ in $M$, so $M'$ is
  transversal.
\end{proof}

To close, we note a stronger version of this result.  First, delete
the loop, the element of $S_{\hat{0}}$, from the matroids $M$ and $M'$
constructed above.  The elements of $S_{\hat{1}}$ are in no proper
cyclic flats of either $M$ or $M'$, so any circuit of $M$ that
contains an element of $S_{\hat{1}}$ spans $M$, and likewise for $M'$.
Thus, $M$ and $M'$ have spanning circuits but no loops and so are
connected.  As shown in \cite{texpansion}, applying the operation of
$t$-expansion to $M$ and $M'$ yields matroids $N$ and $N'$ that have
the same configuration, are transversal, and, by letting $t$ be large
enough, have arbitrarily high connectivity.  (The operation of
$t$-expansion, introduced in \cite{texpansion}, can be seen as
magnifying all sets and ranks in the lattice of cyclic flats by a
factor of $t$ \cite[Definition 3.1]{texpansion} or as the result of
taking a matroid and replacing each element by a set of $t$ parallel
elements (or $t$ loops if the element is a loop), and taking the
matroid union of $t$ copies of this matroid \cite[Theorem
3.11]{texpansion}.  The fact that $t$-expansions inherit the
properties of interest follows from \cite[Lemmas 3.2 and 3.3, and
Theorem 3.13]{texpansion}.)  Similar remarks apply trivially to
vertical connectivity and branch-width since the vertical connectivity
of $M$ and $M'$ is their rank, and their branch-width is one more than
their rank.

\section*{Acknowledgements}

The second author was supported by the Grant PID2023-147202NB-I00
funded by MICIU/AEI/10.13039/501100011033.

\end{document}